\documentclass{amsart}



\usepackage{kotex}
\usepackage{amsmath}
\usepackage{mathrsfs}
\usepackage{amsfonts}
\usepackage{amscd}
\usepackage{slashed}
\usepackage{stmaryrd}

\usepackage{amssymb}
\usepackage{graphicx}
\usepackage{caption}
\usepackage{subcaption}
\usepackage{dsfont}
\usepackage{color}
\usepackage{hyperref}
\usepackage{bbm}
\usepackage{a4wide}
\usepackage{sseq}
\usepackage{tikz-cd}
\usepackage[abs]{overpic} 


\newtheorem{theorem}{Theorem}[section]
\newtheorem{lemma}[theorem]{Lemma}
\newtheorem{proposition}[theorem]{Proposition}
\newtheorem{corollary}[theorem]{Corollary}

\newtheorem{assumption}[theorem]{Assumption}
\newtheorem{example}[theorem]{Example}

\theoremstyle{definition}
\newtheorem*{definition*}{Definition}
\newtheorem{definition}[theorem]{Definition}

\theoremstyle{remark}
\newtheorem*{remark*}{Remark}
\newtheorem{remark}[theorem]{Remark}

\theoremstyle{question}
\newtheorem*{question*}{Question}

\numberwithin{equation}{section}


\newcommand{\myproof}[2]{Proof of {#1} {#2}}

\newcommand{\tripleslash}{/\!\!/\!\!/}



\setcounter{tocdepth}{1}
\setcounter{secnumdepth}{4}
\begin{document}

\title[Quantization commutes with reduction for coisotropic A-branes]{Quantization commutes with reduction\\
	for coisotropic A-branes}

\author{NaiChung Conan Leung, Ying Xie and YuTung Yau}
\address{The Institute of Mathematical Sciences and Department of Mathematics, The Chinese University of Hong Kong, Shatin, Hong Kong}
\email{leung@ims.cuhk.edu.hk}
\address{School of Mathematical Sciences, Shenzhen University, Shenzhen, 518061, P.R. China}
\email{xieying@szu.edu.cn}
\address{Kavli Institute for the Physics and Mathematics of the Universe (WPI), The University of Tokyo Institutes for Advanced Study, The University of Tokyo, Kashiwa, Chiba 277-8583, Japan}
\email{yu-tung.yau@ipmu.jp}

\thanks{}

\maketitle


\begin{abstract}
	On a Hamiltonian $G$-manifold $X$, we define the notion of $G$-invariance of coisotropic A-branes $\mathcal{B}$. Under neat assumptions, we give a Marsden-Weinstein-Meyer type construction of a coisotropic A-brane $\mathcal{B}_{\operatorname{red}}$ on $X \sslash G$ from $\mathcal{B}$, recovering the usual construction when $\mathcal{B}$ is Lagrangian. For a canonical coisotropic A-brane $\mathcal{B}_{\operatorname{cc}}$ on a holomorphic Hamiltonian $G_\mathbb{C}$-manifold $X$, there is a fibration of $(\mathcal{B}_{\operatorname{cc}})_{\operatorname{red}}$ over $X \sslash G_\mathbb{C}$.\par
	We also show that `intersections of A-branes commute with reduction'. When $X = T^*M$ for $M$ being compact K\"ahler with a Hamiltonian $G$-action, Guillemin-Sternberg `quantization commutes with reduction' theorem can be interpreted as $\operatorname{Hom}_{X \sslash G}(\mathcal{B}_{\operatorname{red}}, (\mathcal{B}_{\operatorname{cc}})_{\operatorname{red}}) \cong \operatorname{Hom}_X(\mathcal{B}, \mathcal{B}_{\operatorname{cc}})^G$ with $\mathcal{B} = M$.
\end{abstract}

\section{Introduction}
Since the birth of Marsden-Weinstein-Meyer reduction \cite{MarWei1974, Mey1973} of Hamiltonian manifolds in 1970s, reductions of this type have appeared in numerous kinds of geometry (e.g. \cite{Alv2023, EchMunRom1999, Gin2012, GuiSte1982, HitKarLinRov1987, Kir1984, MarRat1986}). Recently, there have come various developments of equivariant Lagrangian Floer theory (e.g. \cite{HenLipSar2020, KimLauZhe2023, LauLeuLi2023, LekSeg2023}) for quantum cohomologies and Fukaya categories of symplectic reductions.\par
Motivated by the Homological Mirror Symmetry Conjecture \cite{Kon1995}, Kapustin-Orlov \cite{KapOrl2003} argued that \emph{coisotropic A-branes} $\mathcal{B} = (C, E, \nabla)$ should be included in the Fukaya category. Recall that $(E, \nabla)$ is a Hermitian vector bundle over a coisotropic submanifold $C$ of $X$ so that these structures induce a transverse holomorphic symplectic form on $C$ (see also \cite{Gua2011, Her2012}). There are very few examples of coisotropic A-branes, but they play important roles in mirror symmetry \cite{AldZas2005, ChaLeuZha2018, KapOrl2003, KapOrl2004, KatSor2018}, quantum mechanics \cite{GukWit2009}, geometric representation theory \cite{GukKorNawPeiSab2023}, (analytic) geometric Langlands program \cite{EtiFreKaz2021, GaiWit2022, KapWit2007}, knot theory \cite{AgaCosMcnVaf2017, AhaFelHon2019}, etc.\par
Throughout this paper, unless otherwise specified, we have the following standing assumption. 

\begin{assumption}
	$(X, \omega)$ is a symplectic manifold with a Hamiltonian $G$-action $\rho$ by a compact connected Lie group $G$ and a moment map $\mu: X \to \mathfrak{g}^*$ such that $G$ acts on $\mu^{-1}(0)$ freely.
\end{assumption}

We introduce the notion of $G$\emph{-invariance} of coisotropic A-branes $\mathcal{B} = (C, E, \nabla)$, namely $C$ is $G$-invariant and $(E, \nabla)$ is $G$-equivariant. These structures induce a section $\mu_\mathcal{B} \in \Gamma(C, \mathfrak{g}^* \otimes \operatorname{End}(E))^G$, which we call the \emph{moment section} of $\mathcal{B}$. Explicitly,
\begin{equation}
	\label{key}
	\langle \mu_\mathcal{B}, a \rangle = \mathcal{L}_a^E - \nabla_{\chi(a)} + \langle \mu \vert_C, a \rangle \cdot \operatorname{Id}_E
\end{equation}
for $a \in \mathfrak{g}$ (see Definition \ref{Definition 3.4}). Taking a normalization of the trace of $\mu_\mathcal{B}$, we obtain a $G$-equivariant map $\nu: C \to \mathfrak{g}^* \otimes \mathbb{C}$, which is a constant map when $C$ is Lagrangian; and a holomorphic moment map when $\mathcal{B}$ is a canonical coisotropic A-brane on a holomorphic Hamiltonian $G_\mathbb{C}$-manifold $X$. In general, by \eqref{Equation 1.1}, the imaginary part of $\nu$ coincides with the restriction of the moment map $\mu: X \to \mathfrak{g}^*$ onto $C$. It implies that the level set $\nu^{-1}(0)$ lies in $\mu^{-1}(0)$, and hence the quotient $C_{\operatorname{red}} := \nu^{-1}(0)/G$ is contained in the symplectic reduction $X \sslash G := \mu^{-1}(0)/G$.\par
We define the notion of \emph{neatness} of $\mu_\mathcal{B}$, which is a higher rank generalization of cleanness of the moment map value $0$. For $E$ of rank-$1$, $\mu_\mathcal{B}$ is neat if and only if $0$ is a clean value of $\nu$. Our main result shows that, under neat assumptions, $(E, \nabla)$ descends to a pair $(E_{\operatorname{red}}, \nabla_{\operatorname{red}})$ on $C_{\operatorname{red}}$ and $\mathcal{B}_{\operatorname{red}} := (C_{\operatorname{red}}, E_{\operatorname{red}}, \nabla_{\operatorname{red}})$ is a coisotropic A-brane on $X \sslash G$. We call $\mathcal{B}_{\operatorname{red}}$ the \emph{brane reduction} of $\mathcal{B}$.

\begin{theorem}[$=$ Theorem \ref{Theorem 4.11}]
	\label{Theorem 1.2}
	Let $\mathcal{B}$ be a $G$-invariant coisotropic A-brane on $(X, \omega)$. If the moment section $\mu_\mathcal{B}$ of $\mathcal{B}$ is neat, then $\mathcal{B}_{\operatorname{red}}$ is a coisotropic A-brane on the symplectic reduction $(X \sslash G, \omega_{\operatorname{red}})$.
\end{theorem}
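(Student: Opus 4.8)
The plan is to realise $\mathcal{B}_{\operatorname{red}}$ as a \emph{transverse holomorphic symplectic reduction}, treating $\nu$ as the holomorphic moment map for the $G$-action on the transverse holomorphic symplectic geometry of $C$, and then to check that this reduction is compatible with the ordinary (real) Marsden--Weinstein reduction producing $(X\sslash G,\omega_{\operatorname{red}})$. Write $Z:=\mu^{-1}(0)$ and $C_0:=\nu^{-1}(0)$. Since $\nabla$ and the equivariant structure are unitary, $\mathcal{L}_a^E-\nabla_{\chi(a)}$ is skew-Hermitian, so $\operatorname{Re}\langle\nu,a\rangle=(\operatorname{rank}E)\langle\mu\vert_C,a\rangle$; hence $C_0\subseteq Z\cap C$ and $C_0$ is $G$-invariant. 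First I would record the first-order data: by the standard equivariant-curvature (Cartan) identity $d^\nabla(\mathcal{L}_a^E-\nabla_{\chi(a)})=\iota_{\chi(a)}F_\nabla$ together with $d\langle\mu,a\rangle=\iota_{\chi(a)}\omega$, taking traces gives
\begin{equation*}
	d\langle\nu,a\rangle=\iota_{\chi(a)}\Omega_{\mathcal{B}}\vert_C,\qquad \Omega_{\mathcal{B}}:=(\operatorname{rank}E)\,\omega+\operatorname{tr}F_\nabla,
\end{equation*}
a closed $\C$-valued $2$-form whose real part is $(\operatorname{rank}E)\omega$ and whose imaginary part encodes the transverse complex structure of the A-brane. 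Thus $\nu$ is exactly the moment map for $\Omega_{\mathcal{B}}$, and $T_xC_0=\{v\in T_xC:\Omega_{\mathcal{B}}(\chi(a),v)=0\ \forall a\}$ is the $\Omega_{\mathcal{B}}$-orthogonal of the orbit inside $T_xC$. Neatness is the statement that $0$ is clean for this data, so $C_0$ is a smooth $G$-invariant submanifold with this tangent space; freeness of the action on $Z\supseteq C_0$ makes $C_{\operatorname{red}}=C_0/G$ a manifold with $C_0\to C_{\operatorname{red}}$ principal, and, since $Gx\subseteq C_0$, the natural map $C_{\operatorname{red}}\hookrightarrow X\sslash G$ is an injective immersion, hence (as $G$ is compact) an embedding.

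Next I would descend the bundle. Neatness forces $\mu_{\mathcal{B}}$ itself (not only its trace) to vanish on $C_0$; combined with $\mu\vert_{C_0}=0$ this gives $\nabla_{\chi(a)}=\mathcal{L}_a^E$ along $C_0$, which is precisely the condition that the $G$-invariant connection $\nabla\vert_{C_0}$ be basic for the principal bundle $\pi\colon C_0\to C_{\operatorname{red}}$. Hence $(E,\nabla)\vert_{C_0}$ descends to $(E_{\operatorname{red}},\nabla_{\operatorname{red}})$, whose curvature satisfies $\pi^*F_{\operatorname{red}}=F_\nabla\vert_{C_0}$; in particular $\operatorname{tr}F_\nabla\vert_{C_0}$ is basic and descends.

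The geometric heart is the pointwise linear algebra identifying the transverse structures. Fix $x\in C_0$, put $o:=T_x(Gx)$, and pass to the transverse bundle $\mathcal{V}:=T_xC/(T_xC)^\omega$ of the brane, on which $\Omega_{\mathcal{B}}$ induces a nondegenerate $\C$-bilinear form $\bar\Omega=\omega_C+i\beta$ with $J=\omega_C^{-1}\beta$, $J^2=-\operatorname{Id}$. The key point is that $\bar\Omega$-isotropy of the image $\bar o$ of the orbit (which follows from $\mu=0$ and from $\nu\equiv0$, $G$-invariance along $C_0$) makes the \emph{complex} span $\bar o_\C:=\bar o+J\bar o$ isotropic for $\omega_C$ and $J$-invariant, and a short computation gives $\overline{T_xC_0}=(\bar o)^{\omega_C}\cap(J\bar o)^{\omega_C}=(\bar o_\C)^{\omega_C}$. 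Since $\omega_{\operatorname{red}}([u],[v])=\omega(u,v)$ for $u,v\in T_xC_0\subseteq o^\omega$, the null space of $\omega_{\operatorname{red}}$ on $T_{[x]}C_{\operatorname{red}}=T_xC_0/o$ is the image of $\overline{T_xC_0}\cap(\overline{T_xC_0})^{\omega_C}$, and
\begin{equation*}
	(\bar o_\C)^{\omega_C}\cap\big((\bar o_\C)^{\omega_C}\big)^{\omega_C}=(\bar o_\C)^{\omega_C}\cap\bar o_\C=\bar o_\C .
\end{equation*}
Consequently $C_{\operatorname{red}}$ is coisotropic and its transverse bundle is canonically $(\bar o_\C)^{\omega_C}/\bar o_\C$, the holomorphic symplectic reduction of $(\mathcal{V},\bar\Omega,J)$ by the isotropic $J$-invariant subspace $\bar o_\C$; its induced symplectic form is $\operatorname{Re}\bar\Omega^{\operatorname{red}}$, matching $\omega_{\operatorname{red}}$, while $\operatorname{Im}\bar\Omega^{\operatorname{red}}$ matches the transverse form of $\operatorname{tr}F_{\operatorname{red}}$. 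Therefore the transverse $J_{\operatorname{red}}=(\omega_{\operatorname{red},C_{\operatorname{red}}})^{-1}(\operatorname{tr}F_{\operatorname{red}})$ squares to $-\operatorname{Id}$, i.e.\ $\mathcal{B}_{\operatorname{red}}$ carries a transverse holomorphic symplectic form.

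Finally I would upgrade the pointwise statement to the differential one: $\Omega_{\mathcal{B}}\vert_{C_0}$ is closed and, being $G$-invariant with $\iota_{\chi(a)}\Omega_{\mathcal{B}}\vert_{C_0}=d\nu(a)\vert_{C_0}=0$, is basic, so it descends to a closed transverse holomorphic symplectic form on $C_{\operatorname{red}}$; integrability of $J_{\operatorname{red}}$ then follows from that of the transverse complex structure of $\mathcal{B}$ exactly as in the unreduced A-brane axioms. Neatness enters at three places---cleanness of $C_0$, the vanishing $\mu_{\mathcal{B}}\vert_{C_0}=0$ enabling descent, and constancy of ranks letting the pointwise identification globalise to smooth bundles---and the step I expect to be most delicate is precisely the compatibility of the two reductions: that the real quotient by $o$ and the holomorphic quotient by the complex span $\bar o_\C$ produce the same transverse space. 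The appearance of $J\bar o$, forced by the imaginary part of the constraint $\nu=0$, is exactly what reconciles them. For $E$ of higher rank the same scheme applies with $\Omega_{\mathcal{B}}$ replaced by the full $\operatorname{End}(E)$-valued moment data, where neatness is what guarantees the reduced bundle again supports the transverse holomorphic structure.
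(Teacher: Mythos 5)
Your argument follows essentially the same route as the paper's: you treat $\nu$ (up to the normalization $\tfrac{\sqrt{-1}}{r}$, which swaps your real and imaginary parts relative to the paper's $\Omega = F^{\nabla}_{\operatorname{tr}} + \sqrt{-1}\,\omega\vert_C$) as a moment map for the transverse holomorphic symplectic form, use cleanness to get $T_xC_0 = (\widehat{\mathfrak g}_{(x)})^{\perp\omega}$, i.e.\ your $(\bar o_{\C})^{\perp\omega_C}$, identify the real reduction by $o$ with the complex reduction by the isotropic $J$-invariant subspace $\bar o_{\C}$ (this is exactly the content of the paper's Lemmas 4.5, 4.6, 4.12 and diagram (4.7)), and descend $(E,\nabla)$ via the condition $\nabla_{\chi(a)} = \mathcal{L}^E_a$ on $C_0$ (the paper's Proposition 4.8). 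All of this is correct, modulo the constancy-of-rank verifications needed to make $(TC_0)^{\perp\Omega}$ and $(TC_{\operatorname{red}})^{\perp\Omega_{\operatorname{red}}}$ into genuine subbundles, which you flag but do not carry out (the paper does them via its equation (4.4) and freeness of the $G$-action).

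The genuine gap is the second axiom of Definition 2.4 in the higher-rank case: one must show that the \emph{traceless} curvature $F_0^{\nabla_{\operatorname{red}}}$ is of transverse type $(1,1)$ on $C_{\operatorname{red}}$, and your closing sentence (``the same scheme applies with $\Omega_{\mathcal B}$ replaced by the full $\operatorname{End}(E)$-valued moment data'') does not establish this. The type condition on $F_0^{\nabla}$ is not governed by a scalar moment-map identity and is not a consequence of the trace computation that occupies the body of your proof. What is actually needed, and what the paper proves in the final paragraph of its proof of Theorem 1.2, is the following: since $\nabla^0$ restricts to the infinitesimal $\mathfrak g$-action on $E^0$, Proposition 3.2 gives $\iota_{\chi(a)}F^{\nabla^0} = \nabla\langle\mu^{\nabla^0},a\rangle = 0$, hence $F_0^{\nabla^0}(u,v) = 0$ for $u\in\mathfrak g_{(x)} + \mathcal F_x$ and $v\in T_xC^0$; the transverse $(1,1)$ property of $F_0^{\nabla}$ on $C$ then extends this vanishing to $u\in\widehat{\mathfrak g}_{(x)} = \mathcal F^0_x$, so $F_0^{\nabla^0}$ descends to a section of $\bigwedge^2 T^*_{\operatorname{red}}C_{\operatorname{red}}\otimes\operatorname{End}(E_{\operatorname{red}})$ that is of type $(1,1)$ for $I_{\operatorname{red}}$. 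Without this step $\mathcal B_{\operatorname{red}}$ has only been shown to satisfy Definition 2.4 in the rank-$1$ (or projectively flat) case.
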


When $C$ is Lagrangian in $X$, $C_{\operatorname{red}}$ recovers $C/G$ as a Lagrangian submanifold of $X \sslash G$. When $\mathcal{B}$ is a $G$-invariant canonical coisotropic A-brane on a holomorphic Hamiltonian $G_\mathbb{C}$-manifold $X$, $C_{\operatorname{red}}$ is heuristically thought of as a fibration over the holomorphic symplectic reduction of $X$ (see Subsection \ref{Subsection 4.02} for a more precise meaning). For example, take the holomorphic symplectic manifold $X = T^*(\mathbb{C}^2 \backslash 0)$ with the holomorphic Hamiltonian $\mathbb{C}^*$-action induced by the diagonal $\mathbb{C}^*$-action on $\mathbb{C}^2 \backslash 0$. Then $X$ admits an $\mathbb{S}^1$-invariant canonical coisotropic A-brane such that $C_{\operatorname{red}} \cong \mathbb{R} \times T^*\mathbb{P}^1$ is a trivial real line bundle over a holomorphic symplectic manifold $T^*\mathbb{P}^1$.\par
Motivated by equivariant Lagrangian Floer theory and Teleman Conjecture \cite{Tel2014}, we will examine the following principle in this paper:
\begin{center}
	\emph{`A-model morphism spaces commute with brane reduction'},
\end{center}
while a related statement that `intersections of A-branes \footnote{By the intersection of A-branes, we mean the intersection of their supports.} commute with brane reduction' will also be proved in Section \ref{Section 6}.\par
Unfortunately, the above principle is not phrased in a fully mathematical rigour because, for a \emph{general} pair of coisotropic A-branes $\mathcal{B}, \mathcal{B}'$ on $(X, \omega)$, the A-model morphism space $\operatorname{Hom}_X(\mathcal{B}, \mathcal{B}')$ has no well established mathematical definition in the existing literature. When $\mathcal{B}, \mathcal{B}'$ are Lagrangian A-branes so that $\operatorname{Hom}_X(\mathcal{B}, \mathcal{B}')$ can be mathematically realized as the Floer cohomology between them, the above principle has been studied by \cite{HenLipSar2020, KimLauZhe2023, LauLeuLi2023, LekSeg2023}, etc.\par
Beyond the Lagrangian case, attempts to describe $\operatorname{Hom}_X(\mathcal{B}, \mathcal{B}')$ mathematically can be found in \cite{AldZas2005, BisGua2022, KapOrl2004, LeuYau2024, Qin2020, Qiu2024, Qui2012, Yau2024}, etc. In \cite{GukWit2009} where they gave a physical proposal called \emph{brane quantization}, Gukov-Witten investigated the case when $(X, \omega)$ admits a space filling A-brane $\mathcal{B}_{\operatorname{cc}} = (X, L, \nabla)$ and a Lagrangian A-brane $\mathcal{B} = (M, M \times \mathbb{C}, d)$ such that $(L, \nabla)$ restricts to a Hermitian holomorphic line bundle $(L_M, \nabla^{L_M})$ over $M$ and the curvature $-\sqrt{-1} \omega_M$ of $\nabla^{L_M}$ is either non-degenerate or zero. They gave a physical argument that $\operatorname{Hom}_X(\mathcal{B}, \mathcal{B}_{\operatorname{cc}})$ should be regarded as the sheaf cohomology of holomorphic sections of $L_M$, which is a geometric quantization of $(M, \omega_M)$ when $\omega_M$ is non-degenerate (see Subsection \ref{Subsection 5.1} for more details). In this paper, we will take such a pair $(\mathcal{B}, \mathcal{B}_{\operatorname{cc}})$ of A-branes as a testing example of the above principle. This is briefly illustrated as follows.

\subsection{Guillemin-Sternberg theorem as $\operatorname{Hom}_{X \sslash G}(\mathcal{B}_{\operatorname{red}}, (\mathcal{B}_{\operatorname{cc}})_{\operatorname{red}}) \cong \operatorname{Hom}_X(\mathcal{B}, \mathcal{B}_{\operatorname{cc}})^G$}
\quad\par
When $M$ is a compact K\"ahler manifold equipped with a Hamiltonian $G$-action and a $G$-equivariant prequantum line bundle $(L_M, \nabla^{L_M})$, $X = T^*M$ admits a $G$-invariant Lagrangian A-brane $\mathcal{B} = (M, M \times \mathbb{C}, d)$ and a $G$-invariant space-filling A-brane $\mathcal{B}_{\operatorname{cc}} = (X, L, \nabla)$ such that $(L \vert_M, \nabla \vert_M) = (L_M, \nabla^{L_M})$. In alignment with Gukov-Witten brane quantization \cite{GukWit2009}, we give a new perspective on (a refinement \cite{Bra2001, Tel2000} of) Guillemin-Sternberg `quantization commutes with reduction' theorem \cite{GuiSte1982}, which states that
\begin{equation*}
	H_{\overline{\partial}}^{0, *}(M \sslash G, L_{M \sslash G}) \cong H_{\overline{\partial}}^{0, *}(M, L_M)^G.
\end{equation*}
The above theorem can be interpreted as
\begin{equation}
	\label{Equation 1.1}
	\operatorname{Hom}_{X \sslash G}(\mathcal{B}_{\operatorname{red}}, (\mathcal{B}_{\operatorname{cc}})_{\operatorname{red}}) \cong \operatorname{Hom}_X(\mathcal{B}, \mathcal{B}_{\operatorname{cc}})^G.
\end{equation}
A precise statement corresponding to \eqref{Equation 1.1} will appear in Proposition \ref{Proposition 7.11} as one of our main results. Note that this interpretation comes from the perspective of A-models, rather than B-models.

\subsection{Equivariant Dolbeault cohomology as $\operatorname{Hom}_{X \sslash G}(\mathcal{B}_{\operatorname{red}}, (\mathcal{B}_{\operatorname{cc}})_{\operatorname{red}}) \cong \operatorname{Hom}_X(\mathcal{B}, \mathcal{B}_{\operatorname{cc}})^G$}
\quad\par
Alternatively, when $M$ is a complex manifold with a free holomorphic $G_\mathbb{C}$-action, $X = T^*M$ admits a $G$-invariant Lagrangian A-brane $\mathcal{B} = (M, M \times \mathbb{C}, d)$ and a $G$-invariant space-filling A-brane $\mathcal{B}_{\operatorname{cc}} = (X, L, \nabla)$ such that $(L \vert_M, \nabla \vert_M) = (M \times \mathbb{C}, d)$. The quotient manifold $M_0 := M/G$ has a transverse holomorphic structure $\mathcal{E}_0$ induced by the canonical projection $M_0 \to M/G_\mathbb{C}$. We will show that there is an isomorphism
\begin{equation*}
	H_{\mathcal{E}_0}^*(M_0) \cong H_{\overline{\partial}}^{0, *}(M)^G
\end{equation*}
between the Chevalley-Eilenberg cohomology of $\mathcal{E}_0$ and the $G$-equivariant Dolbeault cohomology of $M$, and it can be interpreted as (\ref{Equation 1.1}) in this case.\par
\quad\par
As a final remark, the support $C$ of a coisotropic A-brane on $(X, \omega)$ is precisely a generalized complex submanifold of $(X, \mathcal{J}_\omega)$, where $\mathcal{J}_\omega$ is the generalized complex structure induced by $\omega$. While there is a vast amount of work on reduction of generalized complex manifolds (e.g. \cite{BurCavGua2007, Hu2009, LinTol2006, StiXu2008, Vai2007}), Zambon \cite{Zam2008} also worked on reduction of generalized complex submanifolds. Yet, our construction is different from his -- the brane reduction of $C$ in his sense is the coisotropic reduction of $C$, which is in general not equal to $C_{\operatorname{red}}$ given in Theorem \ref{Theorem 1.2}.\par
Our paper is organized as follows. Section \ref{Section 2} is a review on the definition of coisotropic A-branes. In Section \ref{Section 3}, we will define $G$-invariance of coisotropic A-branes. Section \ref{Section 4} provides a proof of Theorem \ref{Theorem 1.2}. In Section \ref{Section 5}, we will discuss the relationship between A-model morphism spaces and sheaf cohomologies on intersections of A-branes. In Section \ref{Section 6}, we will show that `intersections of A-branes commute with brane reduction'. In Section \ref{Section 7}, we will suggest a new perspective on equivariant Dolbeault cohomology and Guillemin-Sternberg theorem \cite{GuiSte1982} via sheaf cohomologies on intersections of $G$-invariant A-branes on $T^*M$ for a $G$-manifold $M$.

\subsection*{Acknowledgement}
\quad\par
The first author was substantially supported by grants from the Research Grants Council of the Hong Kong Special Administrative Region, China (Project No. CUHK14306322, CUHK14305923 and CUHK14302224), National Science Foundation of China (Project No. NSFC12341101), and direct grants from the Chinese University of Hong Kong. The second author thanks Ki Fung Chan, Yalong Cao, Ziming Nikolas Ma for helpful discussions and general advice related to this work. The third author thanks Ping Xu for his lecture in the NCTS Workwhop on Groupoids and Quantization, in which he mentioned a stacky approach to reduction. The third author also thanks Yan-Lung Leon Li, from whom he learnt about recent developments in equivariant Floer theory. The authors are grateful to the referee for valuable and constructive comments.

\section{Coisotropic A-branes}
\label{Section 2}
Coisotropic A-branes were discovered by Kapustin-Orlov \cite{KapOrl2003} in the rank-$1$ case, and were then formulated in terms of generalized complex geometry by Gualtieri \cite{Gua2011}. A higher rank definition was later introduced by Herbst \cite{Her2012}. The goal of this section is to recall these definitions -- the rank-$1$ case in Subsection \ref{Subsection 2.1} and the higher rank case in Subsection \ref{Subsection 2.2}. Throughout this section, we do not require that the symplectic manifold $(X, \omega)$ is equipped with a Hamiltonian $G$-action.

\subsection{Rank-$1$ coisotropic A-branes}
\label{Subsection 2.1}
\quad\par
As we will see, every coisotropic A-brane has an underlying transverse holomorphic symplectic manifold. Now, We first recall the definition of transverse holomorphic symplectic forms. Let $C$ be a smooth manifold. For a complex valued $2$-form $\Omega$ on $C$, when $(TC)^{\perp \Omega}$ is a vector subbundle of $TC$, we denote by $T_{\operatorname{quo}} C$ the quotient bundle $TC / (TC)^{\perp \Omega}$ and by $\widetilde{\Omega}$ the $2$-form on $T_{\operatorname{quo}}C$ descended by $\Omega$ (see Appendix \ref{Appendix A} for the linear algebra involved).

\begin{definition}
	\label{Definition 2.1}
	A \emph{transverse holomorphic symplectic form} on $C$ is a closed $\mathbb{C}$-valued $2$-form $\Omega$ on it such that $(TC)^{\perp \Omega} \subset TC$ is a vector subbundle and the endomorphism $I: T_{\operatorname{quo}} C \to T_{\operatorname{quo}} C$ defined by the following condition is an almost complex structure on $T_{\operatorname{quo}} C$:
	\begin{equation*}
		\operatorname{Re} \widetilde{\Omega}(u, v) = \operatorname{Im} \widetilde{\Omega}(Iu, v), \quad \text{for all } u, v \in \Gamma(C, T_{\operatorname{quo}} C).
	\end{equation*}
\end{definition}

\begin{remark}
	\label{Remark 2.2}
	Because $d\Omega = 0$, the following complex vector subbundle $\mathcal{E}$ of $TC \otimes \mathbb{C}$ is involutive:
	\begin{equation}
		\label{Equation 2.1}
		\mathcal{E}_x := \{ u \in T_xC \otimes \mathbb{C}: \Omega(u, v) = 0 \text{ for all } v \in T_xC \otimes \mathbb{C} \}
	\end{equation}
	for all $x \in C$. Note that $\mathcal{E}$ coincides with the kernel of the canonical projection $TC \otimes \mathbb{C} \to T_{\operatorname{quo}}^{1, 0}C$. Hence, $(TC)^{\perp \Omega} = \mathcal{E} \cap \overline{\mathcal{E}} \cap TC$ is involutive and $I$ is integrable (see \cite{Dor1993, KapOrl2003} and Appendix \ref{Appendix B}).
\end{remark}

\begin{remark}
	Writing $\Omega = F + \sqrt{-1} \omega$ for real $2$-forms $F$ and $\omega$ on $C$, the condition on the almost complex structure $I$ on $T_{\operatorname{quo}}C$ implies that the kernels of $F$ and of $\omega$ are both equal to $(TC)^{\perp \Omega}$, and $F$ and $\omega$ descend to $\operatorname{Re} \widetilde{\Omega}$ and $\operatorname{Im} \widetilde{\Omega}$ respectively.
\end{remark}

Here and in the sequel, by saying that $(E, \nabla)$ is a Hermitian vector bundle over $C$, we mean that $E$ is a Hermitian vector bundle over $C$ and $\nabla$ is a unitary connection on $E$. We also denote the curvature of $\nabla$ by $F^\nabla$. Here comes the desired definition in the rank-$1$ case.

\begin{definition}[\cite{Gua2011, KapOrl2003}]
	A \emph{rank-}$1$ \emph{coisotropic A-brane} on $(X, \omega)$ is a triple
	\begin{equation*}
		\mathcal{B} = (C, E, \nabla)
	\end{equation*}
	of a coisotropic submanifold $C$ of $(X, \omega)$ and a Hermitian line bundle $(E, \nabla)$ over $C$ such that $\Omega := F + \sqrt{-1} \omega \vert_C$ is a transverse holomorphic symplectic form on $C$, where $F = \sqrt{-1}F^\nabla$.
\end{definition}

As we will see in the next subsection, to generalize this definition to the higher rank case, we need to recall notions on top of the underlying transverse holomorphic symplectic geometry.

\subsection{Higher rank coisotropic A-branes}
\label{Subsection 2.2}
\quad\par
Consider a transverse holomorphic symplectic manifold $(C, \Omega)$ and a Hermitian vector bundle $(E, \nabla)$ of rank $r$ over $C$. We decompose the curvature $F^\nabla$ into two components:
\begin{equation*}
	F^\nabla = F_0^\nabla - \sqrt{-1}F_{\operatorname{tr}}^\nabla \cdot \operatorname{Id}_E.
\end{equation*}
where $F_{\operatorname{tr}}^\nabla$ is the real valued $2$-form on $C$ defined by $F_{\operatorname{tr}}^\nabla = \tfrac{\sqrt{-1}}{r} \operatorname{Tr} F^\nabla$ and $F_0^\nabla \in \Omega^2(C, \operatorname{End}(E))$ is such that $\operatorname{Tr} F_0^\nabla = 0$. We call $F_0^\nabla$ the \emph{traceless curvature} of $\nabla$ (see, for instance, \cite{Ver1996}). Heuristically, $F_0^\nabla$ is the curvature of $E \otimes (\sqrt[r]{\det E})^*$, though an $r$th root $\sqrt[r]{\det E}$ of $\det E$ might not exist as a Hermitian line bundle. When $r = 1$, $F_0^\nabla = 0$.\par
Let $I$ be the integrable almost complex structure on $T_{\operatorname{quo}}C$ associated with $(C, \Omega)$. We say that an $E$-valued $2$-form $\beta \in \Omega^2(C, E)$ is \emph{of transverse type} $(1, 1)$ if it descends to a smooth section $\widetilde{\beta} \in \Gamma\left( C, \textstyle\bigwedge^2 T_{\operatorname{quo}}^*C \otimes E \right)$ such that $\widetilde{\beta}(Iu, Iv) = \widetilde{\beta}(u, v)$ for all $u, v \in \Gamma(C, T_{\operatorname{quo}}C)$.\par
We are now ready to state the definition of coisotropic A-branes (of higher ranks).

\begin{definition}[\cite{Her2012}]
	\label{Definition 2.4}
	A \emph{coisotropic A-brane} \footnote{Even more generally, Herbst \cite{Her2012} described these branes as pairs of a coisotropic submanifold and a complex of vector bundles, each of which bundle is as in Definition \ref{Definition 2.4}, with a non-commutative differential.} (or \emph{A-brane} in short) on $(X, \omega)$ is a triple
	\begin{equation*}
		\mathcal{B} = (C, E, \nabla)
	\end{equation*}
	of a coisotropic submanifold $C$ of $(X, \omega)$ and a Hermitian vector bundle $(E, \nabla)$ over $C$ such that
	\begin{enumerate}
		\item $\Omega := F_{\operatorname{tr}}^\nabla + \sqrt{-1} \omega \vert_C$ is a transverse holomorphic symplectic form on $C$; and
		\item $F_0^\nabla$ is of transverse type $(1, 1)$.
	\end{enumerate}
\end{definition}

We call $C$ the \emph{support} of $\mathcal{B}$.

\begin{remark}
	\label{Remark 2.5}
	Observe that $F_0^\nabla$, but not $F^\nabla$ in general, vanishes on $\mathcal{E}$, where $\mathcal{E}$ is defined as in \eqref{Equation 2.1}. It implies that the following $\operatorname{End}(E)$-valued $2$-form vanishes on $\mathcal{E}$:
	\begin{equation}
		F^\nabla + (\omega \vert_C) \operatorname{Id}_E = F_0^\nabla - \sqrt{-1}\Omega \operatorname{Id}_E.
	\end{equation}
\end{remark}

At the end of this section, we will go through some examples.

\begin{example}
	Let $C$ be a Lagrangian submanifold of $(X, \omega)$ and $(E, \nabla)$ be a flat Hermitian vector bundle over $C$. Then $(C, E, \nabla)$ is an A-brane on $(X, \omega)$, known as a \emph{Lagrangian A-brane}. Indeed, if the support of an A-brane is Lagrangian, then the A-brane must be of the above form.
\end{example}

\begin{example}
	Let $(C, L, \nabla^L)$ be a rank-$1$ A-brane on $(X, \omega)$ and $(E, \nabla^E)$ be a flat Hermitian vector bundle over $C$. Then $(E \otimes L, \nabla^{E \otimes L})$ is projectively flat and $(C, E \otimes L, \nabla^{E \otimes L})$ is an A-brane on $(X, \omega)$. Examples of such branes appeared in \cite{ChaLeuZha2018}.
\end{example}

\begin{example}
	Suppose that $X$ admits a hyperK\"ahler structure defined by complex structures $J, K$ with $JK = -KJ$ and their respective K\"ahler forms $\omega_J, \omega_K$ such that $\omega = \omega_K$. Let $(E, \nabla)$ be a projectively hyperholomorphic bundle over $X$ (see \cite{Ver1996}) such that $F_{\operatorname{tr}}^\nabla = \omega_J$. Then $(X, E, \nabla)$ is an A-brane on $(X, \omega)$. Examples of such branes appeared in \cite{GukKorNawPeiSab2023, GukWit2009, KapWit2007}.
\end{example}

\section{$G$-invariant coisotropic A-branes}
\label{Section 3}
The goal of this section is to introduce the notion of $G$\emph{-invariance} of (coisotropic) A-branes $\mathcal{B} = (C, E, \nabla)$ on a Hamiltonian $G$-manifold. This notion will be given in Subsection \ref{Subsection 3.3}. As a preparation, we will discuss Hamiltonian $G$-actions on $C$ in Subsection \ref{Subsection 3.1}, and $G$-equivariant structures on $(E, \nabla)$ in Subsection \ref{Subsection 3.2}.

\subsection{Hamiltonian actions on transverse holomorphic symplectic manifolds}
\label{Subsection 3.1}
\quad\par
Let $C$ be a smooth $G$-manifold and $\chi_C$ be the induced infinitesimal $\mathfrak{g}$-action on it.

\begin{definition}
	\label{Definition 3.1}
	Let $\Omega$ be a transverse holomorphic symplectic form on $C$. The $G$-action on $C$ is said to be \emph{Hamiltonian} on $(C, \Omega)$  if $\Omega$ is $G$-invariant and there exists a $G$-equivariant map $\nu: C \to \mathfrak{g}^* \otimes \mathbb{C}$, called a \emph{moment map}, such that for all $a \in \mathfrak{g}$ and $u \in \Gamma(C, TC)$,
	\begin{equation}
		\Omega(\chi_C(a), u) = \langle d\nu(u), a \rangle.
	\end{equation}
\end{definition}

In the above definition, we do not require that the $G$-action on $C$ extends to a $G_\mathbb{C}$-action. Still, we can deduce that there is an infinitesimal $\mathfrak{g}_\mathbb{C}$-action on  $T_{\operatorname{quo}}C = TC / (TC)^{\perp \Omega}$ preserving the integrable almost complex structure $I$ on it (see Appendix \ref{Appendix B}).

\subsection{Equivariant Hermitian vector bundles and Hamiltonian actions}
\label{Subsection 3.2}
\quad\par
Suppose furthermore that $(E, \nabla)$ is a $G$-\emph{equivariant} Hermitian vector bundle over $C$, i.e. the underlying complex vector bundle $E$ is $G$-equivariant and both the Hermitian metric on $E$ and $\nabla$ are $G$-invariant. Then there is an induced action $\mathcal{L}_a^E$ of $a \in \mathfrak{g}$ on $\Gamma(C, E)$. We can then define a smooth section $\mu^\nabla \in \Gamma(C, \mathfrak{g}^* \otimes \operatorname{End}(E))$ by
\begin{equation}
	\langle \mu^\nabla, a \rangle(s) := \mu^\nabla(a \otimes s) = \mathcal{L}_a^Es - \nabla_{\chi_C(a)} s.
\end{equation}
for all $a \in \mathfrak{g}$ and $s \in \Gamma(C, E)$. It is a generalization of moment maps in the following sense.

\begin{proposition}
	\label{Proposition 3.2}
	$\mu^\nabla \in \Gamma(C, \mathfrak{g}^* \otimes \mathfrak{u}(E))^G$. Moreover, for all $a, b \in \mathfrak{g}$, $\iota_{\chi_C(a)}F^\nabla = \nabla \langle \mu^\nabla, a \rangle$ and
	\begin{equation}
		\label{Equation 3.3}
		F^\nabla(\chi_C(a), \chi_C(b)) = \langle \mu^\nabla, a \rangle \circ \langle \mu^\nabla, b \rangle - \langle \mu^\nabla, b \rangle \circ \langle \mu^\nabla, a \rangle - \langle \mu^\nabla, [a, b] \rangle.
	\end{equation}
\end{proposition}
\begin{proof}
	Let $h$ denote the Hermitian metric on $E$. For $a \in \mathfrak{g}$ and $s, s' \in \Gamma(C, E)$, since both
	\begin{equation*}
		h(\mathcal{L}_a^Es, s') + h(s, \mathcal{L}_a^E s') \quad \text{and} \quad h(\nabla_{\chi_C(a)} s, s') + h(s, \nabla_{\chi_C(a)} s')
	\end{equation*}
	are equal to $\mathcal{L}_{\chi_C(a)} (h(s, s'))$, we have $h(\langle \mu^\nabla, a \rangle(s), s') + h(s, \langle \mu^\nabla, a \rangle(s')) = 0$. By $G$-invariance of $\nabla$ and the fundamental properties of the infinitesimal $\mathfrak{g}$-actions $\mathcal{L}^E$ on $\Gamma(C, E)$ and $\chi_C$ on $C$, we can easily deduce that the section $\mu^\nabla$ of $\mathfrak{g}^* \otimes \mathfrak{u}(E)$ is $G$-invariant, where $G$ acts on $\mathfrak{g}^*$ by the coadjoint action. Now, let $a \in \mathfrak{g}$ and $v \in \Gamma(C, TC)$. By $G$-invariance of $\nabla$ again,
	\begin{equation*}
		\mathcal{L}_a^E \circ \nabla_v = \nabla_{[\chi_C(a), v]} + \nabla_v \circ \mathcal{L}_a^E.
	\end{equation*}
	Using this equality and the formula $\langle \mu^\nabla, a \rangle = \mathcal{L}_a^E - \nabla_{\chi_C(a)}$, we obtain
	\begin{equation*}
		F^\nabla(\chi_C(a), v)s = \nabla_v (\langle \mu^\nabla, a \rangle (s)) - \langle \mu^\nabla, a \rangle (\nabla_v s) = (\nabla_v\langle \mu^\nabla, a \rangle)(s),
	\end{equation*}
	for all $s \in \Gamma(C, E)$. Setting $v = \chi_C(b)$ for $b \in \mathfrak{g}$ yields
	\begin{align*}
		F^\nabla(\chi_C(a), \chi_C(b)) = & \nabla_{\chi_C(b)} \circ \langle \mu^\nabla, a \rangle - \langle \mu^\nabla, a \rangle \circ \nabla_{\chi_C(b)}\\
		= & \langle \mu^\nabla, a \rangle \circ \langle \mu^\nabla, b \rangle - \langle \mu^\nabla, b \rangle \circ \langle \mu^\nabla, a \rangle + \mathcal{L}_b^E \circ \langle \mu^\nabla, a \rangle - \langle \mu^\nabla, a \rangle \circ \mathcal{L}_b^E.
	\end{align*}
	Passing $G$-invariance of $\mu^\nabla$ to the infinitesimal level yields
	\begin{align*}
		\mathcal{L}_b^E \circ \langle \mu^\nabla, a \rangle = \langle \mu^\nabla, [b, a] \rangle + \langle \mu^\nabla, a \rangle \circ \mathcal{L}_b^E.
	\end{align*}
	Consequently, \eqref{Equation 3.3} holds.
\end{proof}

Recall that $F_{\operatorname{tr}}^\nabla = \tfrac{\sqrt{-1}}{r} \operatorname{Tr} F^\nabla$ is a $d$-closed $\mathbb{R}$-valued $2$-form on $C$, where $r$ is the rank of $E$. Indeed, the fact that $(E, \nabla)$ is $G$-equivariant implies that the $G$-action on $C$ is \emph{Hamiltonian} \footnote{In some literature (e.g. Definition 3 in \cite{EchMunRom1999}), when $F_{\operatorname{tr}}^\nabla$ is a presymplectic form, i.e. when $F_{\operatorname{tr}}^\nabla$ is of constant rank, it is called a strongly Hamiltonian action. However, we just call it a Hamiltonian action in this paper.} on $(C, F_{\operatorname{tr}}^\nabla)$ in the sense that $F_{\operatorname{tr}}^\nabla$ is $G$-invariant and there exists a $G$-equivariant smooth map $\mu_{\operatorname{tr}}^\nabla: C \to \mathfrak{g}^*$ ($\mu_{\operatorname{tr}}^\nabla = \tfrac{\sqrt{-1}}{r} \operatorname{Tr} \mu^\nabla$ in this case) such that for all $a \in \mathfrak{g}$ and $u \in \Gamma(C, TC)$,
\begin{equation*}
	F_{\operatorname{tr}}^\nabla(\chi_C(a), u) = \langle d\mu_{\operatorname{tr}}^\nabla(u), a \rangle.
\end{equation*}

\subsection{Invariant coisotropic A-branes}
\label{Subsection 3.3}

\begin{definition}
	A (coisotropic) A-brane $\mathcal{B} = (C, E, \nabla)$ on $(X, \omega)$ is said to be $G$-\emph{invariant} if $C$ is a $G$-invariant submanifold of $X$ and $(E, \nabla)$ is $G$-equivariant.
\end{definition}

Let $\chi$ be the induced infinitesimal $\mathfrak{g}$-action on $X$. $G$-invariance of an A-brane induces a smooth section of $\mathfrak{g}^* \otimes \operatorname{End}(E)$ as follows.

\begin{definition}
	\label{Definition 3.4}
	The \emph{moment section} of a $G$-invariant A-brane $\mathcal{B} = (C, E, \nabla)$ on $(X, \omega)$ is a smooth section $\mu_\mathcal{B} \in \Gamma(C, \mathfrak{g}^* \otimes \operatorname{End}(E))$ defined as follows. For all $a \in \mathfrak{g}$,
	\begin{equation}
		\langle \mu_\mathcal{B}, a \rangle := \mathcal{L}_a^E - \nabla_{\chi(a)} + \langle \mu \vert_C, a \rangle \cdot \operatorname{Id}_E.
	\end{equation}
\end{definition}

\begin{proposition}
	Let $\mathcal{B} = (C, E, \nabla)$ be a $G$-invariant A-brane on $(X, \omega)$. Then
	\begin{enumerate}
		\item $F^\nabla + (\omega \vert_C) \operatorname{Id}_E$ is $G$-invariant;
		\item the moment section $\mu_\mathcal{B}$ of $\mathcal{B}$ is $G$-invariant; and
		\item for all $a \in \mathfrak{g}$, $\iota_{\chi(a)} (F^\nabla + (\omega \vert_C) \operatorname{Id}_E) = \nabla \langle \mu_\mathcal{B}, a \rangle$.
	\end{enumerate}
\end{proposition}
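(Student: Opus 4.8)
The plan is to verify the three assertions in order, treating the $G$-invariant A-brane structure as a perturbation of the data in Proposition \ref{Proposition 3.2} by the moment map $\mu$ of the ambient symplectic action. The key observation driving all three parts is the identity recorded in Remark \ref{Remark 2.5}, namely that $F^\nabla + (\omega\vert_C)\operatorname{Id}_E = F_0^\nabla - \sqrt{-1}\Omega\operatorname{Id}_E$, which reorganizes the curvature-plus-symplectic term into intrinsic transverse holomorphic symplectic data. The definition of $\mu_\mathcal{B}$ in Definition \ref{Definition 3.4} differs from $\mu^\nabla$ of Subsection \ref{Subsection 3.2} precisely by the addition of $\langle \mu\vert_C, a\rangle\cdot\operatorname{Id}_E$, so my strategy is to reduce each claim to the corresponding assertion of Proposition \ref{Proposition 3.2} plus a controlled contribution from $\mu\vert_C$.

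First I would establish (1). Since $(E,\nabla)$ is $G$-equivariant, the Hermitian metric and $\nabla$ are $G$-invariant, hence so is the curvature $F^\nabla$. For the second summand, $\omega$ is $G$-invariant by the standing Hamiltonian assumption, and $C$ is a $G$-invariant submanifold, so $\omega\vert_C$ is $G$-invariant; since $\operatorname{Id}_E$ is canonically $G$-equivariant, the product $(\omega\vert_C)\operatorname{Id}_E$ is $G$-invariant. Adding the two invariant terms gives the claim. For (2), the section $\mu^\nabla$ lies in $\Gamma(C,\mathfrak{g}^*\otimes\mathfrak{u}(E))^G$ by Proposition \ref{Proposition 3.2}, and $\langle \mu\vert_C, a\rangle$ is a $G$-equivariant function on $C$ in the sense that its pairing against $\mathfrak{g}$ transforms in the coadjoint representation; combined with the $G$-invariance of $\operatorname{Id}_E$, the extra term is $G$-invariant, so $\mu_\mathcal{B} = \mu^\nabla + \langle\mu\vert_C,\,\cdot\,\rangle\operatorname{Id}_E$ is $G$-invariant as a section of $\mathfrak{g}^*\otimes\operatorname{End}(E)$.

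The substantive computation is (3), which I would prove by contracting $F^\nabla + (\omega\vert_C)\operatorname{Id}_E$ with the fundamental vector field $\chi(a)$ and matching it against $\nabla\langle\mu_\mathcal{B}, a\rangle$. By Proposition \ref{Proposition 3.2} we already have $\iota_{\chi(a)}F^\nabla = \nabla\langle\mu^\nabla, a\rangle$, where now $\chi$ restricts to the tangential action $\chi_C$ on $C$ (using $G$-invariance of $C$ so that $\chi(a)$ is tangent to $C$). It remains to identify $\iota_{\chi(a)}\big((\omega\vert_C)\operatorname{Id}_E\big)$ with $\nabla\big(\langle\mu\vert_C,a\rangle\operatorname{Id}_E\big) = d\langle\mu\vert_C,a\rangle\otimes\operatorname{Id}_E$, the last equality holding because $\nabla$ is a connection and $\operatorname{Id}_E$ is parallel. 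The defining property of the moment map $\mu$, that $\iota_{\chi(a)}\omega = d\langle\mu,a\rangle$ on $X$, restricts to $C$ to give exactly $\iota_{\chi(a)}(\omega\vert_C) = d\langle\mu\vert_C, a\rangle$, so the two contributions align termwise and summing recovers the asserted identity.

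The main obstacle I anticipate is bookkeeping rather than any deep difficulty: one must be careful that $\chi(a)$ is genuinely tangent to $C$ so that the contractions make sense intrinsically on $C$, and that restricting the ambient moment map relation $\iota_{\chi(a)}\omega = d\langle\mu,a\rangle$ to $C$ commutes with pullback of forms. One should also confirm that the decomposition $\mu_\mathcal{B} = \mu^\nabla + \langle\mu\vert_C,\,\cdot\,\rangle\operatorname{Id}_E$ is literally the one in Definition \ref{Definition 3.4}, which it is since the first two terms $\mathcal{L}^E_a - \nabla_{\chi(a)}$ agree with $\langle\mu^\nabla,a\rangle$ once we note $\chi\vert_C = \chi_C$. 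With these points checked, all three parts follow by combining the $G$-invariance of the ambient data with Proposition \ref{Proposition 3.2} and the moment map identity, requiring no estimates or genuinely new computation beyond the linear reorganization above.
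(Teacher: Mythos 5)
Your argument is correct: the decomposition $\langle\mu_{\mathcal{B}},a\rangle=\langle\mu^{\nabla},a\rangle+\langle\mu\vert_C,a\rangle\operatorname{Id}_E$ (valid because $\chi(a)\vert_C=\chi_C(a)$ on the $G$-invariant submanifold $C$) reduces all three parts to Proposition \ref{Proposition 3.2} together with the moment map identity $\iota_{\chi(a)}\omega=d\langle\mu,a\rangle$ restricted to $C$, and each step you flag (tangency of $\chi(a)$ to $C$, compatibility of contraction with restriction, $\nabla\operatorname{Id}_E=0$) checks out. The paper states this proposition without proof, evidently regarding it as routine, so there is nothing to compare against; your write-up is the natural argument that fills that gap, and the appeal to Remark \ref{Remark 2.5} in your opening paragraph is in fact not needed anywhere in the proof.
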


This proposition follows from Proposition \ref{Proposition 3.2} and the definitions of the Hamiltonian $G$-action on $(X, \omega)$ and its moment map $\mu: X \to \mathfrak{g}^*$; we omit the details of the proof.\par
There induces a natural $G$-equivariant structure on $\det E$ (equipped with the induced unitary connection $\nabla^{\det E}$). As a result, the following proposition holds.

\begin{proposition}
	\label{Proposition 3.6}
	Let $\mathcal{B} = (C, E, \nabla)$ be a $G$-invariant rank-$r$ A-brane on $(X, \omega)$. Then $G$ acts on the transverse holomorphic symplectic manifold $C$ by a Hamiltonian action with a moment map
	\begin{equation*}
		\nu: C \to \mathfrak{g}^* \otimes \mathbb{C},
	\end{equation*}
	which we call the \emph{moment trace} of $\mathcal{B}$, uniquely determined by the following conditions:
	\begin{enumerate}
		\item the real part $\operatorname{Re} \nu$ of $\nu$ satisfies $\mathcal{L}_a^{\det E} = \nabla_{\chi(a)}^{\det E} - \sqrt{-1} r \langle \operatorname{Re} \nu, a \rangle$ for all $a \in \mathfrak{g}$; and
		\item the imaginary part of $\nu$ is equal to the restriction $\mu \vert_C$ of the moment map $\mu: X \to \mathfrak{g}^*$.
	\end{enumerate}
	In particular,
	\begin{equation}
		\label{Equation 3.4}
		\nu = \tfrac{\sqrt{-1}}{r} \operatorname{Tr} \mu_\mathcal{B}.
	\end{equation}
\end{proposition}
\begin{proof}
	Applying Proposition \ref{Proposition 3.2} to $(\det E, \nabla^{\det E})$, we have shown that
	\begin{equation*}
		F^{\nabla^{\det E}} = \operatorname{Tr} F^\nabla = \tfrac{r}{\sqrt{-1}} F_{\operatorname{tr}}^\nabla
	\end{equation*}
	is $G$-invariant, the following map $\mu^{\nabla^{\det E}}: C \to \mathfrak{g}^* \otimes \mathfrak{u}(\det E) \cong \mathfrak{g}^* \otimes \mathbb{R} \sqrt{-1}$ is $G$-equivariant:
	\begin{equation*}
		\langle \mu^{\nabla^{\det E}}, a \rangle := \mathcal{L}_a^{\det E} - \nabla_{\chi(a)}^{\det E}, \quad \text{ for all } a \in \mathfrak{g},
	\end{equation*}
	and $\frac{r}{\sqrt{-1}} \iota_{\chi(a)} F_{\operatorname{tr}}^\nabla = \langle d\mu^{\nabla^{\det E}}, a \rangle$ for all $a \in \mathfrak{g}$. In particular, the transverse holomorphic symplectic form $\Omega = F_{\operatorname{tr}}^\nabla + \sqrt{-1} \omega \vert_C$ on $C$ associated with $\mathcal{B}$ is $G$-invariant, the map
	\begin{equation*}
		\nu = \left( \tfrac{\sqrt{-1}}{r} \mu^{\nabla^{\det E}} \right) + \sqrt{-1} \mu \vert_C: C \to \mathfrak{g}^* \otimes \mathbb{C}
	\end{equation*}
	is $G$-equivariant, and $\iota_{\chi(a)} \Omega = \langle d\nu, a \rangle$ for all $a \in \mathfrak{g}$. Therefore, The $G$-action on $(C, \Omega)$ is Hamiltonian. The equality $\nu = \tfrac{\sqrt{-1}}{r} \operatorname{Tr} \mu_\mathcal{B}$ follows immediately.
\end{proof}

\section{Brane reduction}
\label{Section 4}
In this key section of our paper, we will prove Theorem \ref{Theorem 1.2} that we can perform a Marsden-Weinstein-Meyer type reduction to obtain an A-brane $\mathcal{B}_{\operatorname{red}}$ on $X \sslash G$ from a $G$-invariant A-brane $\mathcal{B} = (C, E, \nabla)$ on $X$ satisfying neat assumptions. We call it the \emph{brane reduction} of $\mathcal{B}$.\par
The brane reduction $\mathcal{B}_{\operatorname{red}} = (C_{\operatorname{red}}, E_{\operatorname{red}}, \nabla_{\operatorname{red}})$ consists of two components: $C_{\operatorname{red}}$ and $(E_{\operatorname{red}}, \nabla_{\operatorname{red}})$. Indeed, the construction of $C_{\operatorname{red}}$ only relies on the Hamiltonian $G$-action on $C$ as a transverse holomorphic symplectic manifold, without the need to embed $C$ into $X$ as a coisotropic submanifold. Subsection \ref{Subsection 4.1} is devoted to this issue. In Subsection \ref{Subsection 4.02}, we will comment on the relationships among our construction, holomorphic symplectic reductions and hyperK\"ahler quotients. In Subsection \ref{Subsection 4.2}, we will explain how to construct $(E_{\operatorname{red}}, \nabla_{\operatorname{red}})$ from $(E, \nabla)$. Finally, we will complete our proof of Theorem \ref{Theorem 1.2} in Subsection \ref{Subsection 4.3}.

\subsection{Brane reduction of Hamiltonian transverse holomorphic symplectic manifolds}
\label{Subsection 4.1}
\quad\par
We will prove a `Marsden-Weinstein-Meyer Theorem' for transverse holomorphic symplectic manifolds with Hamiltonian $G$-actions. Let us first recall the following definition.

\begin{definition}
	Let $f: Y \to Z$ be a smooth map between two smooth manifolds $Y$ and $Z$. A point $z \in Z$ is said to be a \emph{clean value} (or \emph{weakly regular value}) of $f$ if $S := f^{-1}(z)$ is a smooth submanifold of $Y$ and for all $y \in S$, $T_yS$ is the kernel of $df_y: T_yY \to T_zZ$.
\end{definition}

Let $G$ act on a transverse holomorphic symplectic manifold $(C, \Omega)$ by a Hamiltonian action with a moment map $\nu: C \to \mathfrak{g}^* \otimes \mathbb{C}$. Now, we state the aforementioned theorem.

\begin{theorem}
	\label{Theorem 4.2}
	Suppose that $G$ acts on $\nu^{-1}(0)$ freely and $0$ is a clean value of $\nu$. Then $\Omega \vert_{\nu^{-1}(0)}$ descends to a transverse holomorphic symplectic form $\Omega_{\operatorname{red}}$ on $C_{\operatorname{red}} := \nu^{-1}(0) / G$.
\end{theorem}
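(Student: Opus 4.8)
The plan is to carry out a Marsden--Weinstein--Meyer reduction at the level of the transverse structure, but \emph{holomorphically} rather than in the naive real sense. Write $\iota \colon S := \nu^{-1}(0) \hookrightarrow C$ and $\pi \colon S \to C_{\operatorname{red}} = S/G$. By the clean value hypothesis $S$ is a smooth submanifold with $T_pS = \ker d\nu_p$, and $G$-equivariance of $\nu$ makes $S$ a $G$-invariant submanifold carrying a free $G$-action. For $a \in \mathfrak{g}$ the moment map identity $\Omega(\chi_C(a), \cdot) = \langle d\nu(\cdot), a \rangle$ shows immediately that $T_pS = \ker d\nu_p = (\mathfrak{g}\cdot p)^{\perp \Omega}$, where $\mathfrak{g}\cdot p := \{ \chi_C(a)_p : a \in \mathfrak{g}\}$; since $\nu(p) = 0$ the whole orbit lies in $S$, so $\mathfrak{g}\cdot p \subseteq T_pS$, and the same identity gives $\Omega(\chi_C(a), w) = 0$ for every $w \in T_pS$, i.e. $\mathfrak{g}\cdot p \subseteq \ker(\iota^*\Omega)_p$. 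Hence $\iota^*\Omega$ is $G$-invariant and horizontal, therefore basic, and descends to a unique closed $\mathbb{C}$-valued $2$-form $\Omega_{\operatorname{red}}$ on $C_{\operatorname{red}}$ with $\pi^*\Omega_{\operatorname{red}} = \iota^*\Omega$; closedness is inherited since $\pi$ is a submersion. This produces the candidate $(C_{\operatorname{red}}, \Omega_{\operatorname{red}})$, and the whole task becomes to verify the three conditions of Definition \ref{Definition 2.1}.

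The step I would isolate as the key lemma is that $\nu$ is \emph{transversely holomorphic}. The moment identity shows $d\nu$ annihilates $(TC)^{\perp\Omega}$, so $\nu$ descends to the local leaf space of this foliation, on which $\widetilde\Omega$ is an honest holomorphic symplectic form; contracting the $(2,0)$-form $\widetilde\Omega$ with the (real) image of $\chi_C(a)$ in $T_{\operatorname{red}}C$ yields a $(1,0)$-form, so each $\langle \nu, a\rangle$ is transversely holomorphic. Fixing $p \in S$ and writing $\mathcal{V} := T_{\operatorname{red}, p}C$ with its complex structure $I$ and nondegenerate $(2,0)$-form $\widetilde\Omega_p$, the consequence I need is that the image $\overline{T_pS} \subseteq \mathcal{V}$ of $T_pS$ is $\ker d\widetilde\nu_p$, hence an $I$-invariant (complex) subspace. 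Since a nondegenerate $(2,0)$-form is precisely a $\mathbb{C}$-bilinear symplectic form for $I$, and $\widetilde\Omega$-orthogonality of a real subspace coincides with that of its complex span, we get $\overline{T_pS} = (\overline{\mathfrak{g}\cdot p})^{\perp\widetilde\Omega} = \bigl(\overline{\mathfrak{g}\cdot p} + I\,\overline{\mathfrak{g}\cdot p}\bigr)^{\perp\widetilde\Omega}$, where $\overline{\mathfrak{g}\cdot p}$ denotes the image of the orbit in $\mathcal{V}$. Finally the isotropy computation $\widetilde\Omega(\chi_C(a), \chi_C(b)) = \langle d\nu(\chi_C(b)), a\rangle = 0$ on $S$ shows that the complex span $W_p := \overline{\mathfrak{g}\cdot p} + I\,\overline{\mathfrak{g}\cdot p}$ is a complex isotropic subspace contained in $\overline{T_pS}$.

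Granting the lemma, I would identify the transverse reduced space as a purely complex-linear symplectic reduction. Using $(U^\perp)^\perp = U$ for the nondegenerate $\mathbb{C}$-bilinear form $\widetilde\Omega_p$ together with the complex isotropy of $W_p$, the image in $\mathcal{V}$ of $\ker(\iota^*\Omega)_p$ is exactly $W_p$, so that
\[
	T_{\operatorname{red}, [p]}C_{\operatorname{red}} \;\cong\; W_p^{\perp\widetilde\Omega} \big/ W_p
\]
is the symplectic reduction of $(\mathcal{V}, I, \widetilde\Omega_p)$ by the complex isotropic subspace $W_p$. As a quotient of $I$-invariant spaces by an $I$-invariant subspace it inherits a complex structure $I_{\operatorname{red}}$ together with the descended nondegenerate $(2,0)$-form $\widetilde\Omega_{\operatorname{red}}$ -- which is precisely the assertion that the $I_{\operatorname{red}}$ attached to $\Omega_{\operatorname{red}}$ by Definition \ref{Definition 2.1} exists and is an almost complex structure. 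Its integrability is then automatic from Remark \ref{Remark 2.2}, given that $\Omega_{\operatorname{red}}$ is closed with $(TC_{\operatorname{red}})^{\perp\Omega_{\operatorname{red}}}$ a subbundle.

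The hard part is precisely the subtlety packaged into the previous two paragraphs: the kernel of $\Omega_{\operatorname{red}}$ is \emph{not} just the orbit together with the old kernel $(TC)^{\perp\Omega}$, but also contains the ``imaginary orbit'' directions $I\,\overline{\mathfrak{g}\cdot p}$ -- the directions that, on the leaf space, integrate the non-compact $\exp(\sqrt{-1}\mathfrak{g})$ flow. A naive real Marsden--Weinstein computation misses these and would even yield odd-dimensional transverse spaces when $\dim G$ is odd; it is only the transverse holomorphicity of $\nu$ that forces the correct, \emph{complex} symplectic reduction and thereby produces $I_{\operatorname{red}}$. The remaining obligation is routine but must be checked: I would use freeness of the $G$-action and the clean value hypothesis to argue that $p \mapsto \dim_{\mathbb{C}} W_p$ is locally constant, so that $(TC_{\operatorname{red}})^{\perp\Omega_{\operatorname{red}}}$ has constant rank and is a genuine subbundle, while smoothness of $I_{\operatorname{red}}$ and $\widetilde\Omega_{\operatorname{red}}$ is immediate as they descend from smooth data on $S$.
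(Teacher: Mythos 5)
Your proposal is correct and follows essentially the same route as the paper: your key lemma (transverse holomorphicity of $\nu$, $\overline{T_pS}=W_p^{\perp\widetilde\Omega}$ with $W_p=\overline{\mathfrak{g}\cdot p}+I\,\overline{\mathfrak{g}\cdot p}$ complex isotropic, and the identification of the kernel of the descended form with $W_p$) is precisely the content of the paper's Lemmas \ref{Lemma 4.5} and \ref{Lemma 4.6}, where $W_p$ appears as the image of $\widehat{\mathfrak{g}}_{(x)}$ and the reduced transverse space as the complex symplectic reduction $W_p^{\perp\widetilde\Omega}/W_p$. The only difference is packaging -- you perform the complex-linear reduction pointwise in one step, while the paper passes through the intermediate transverse holomorphic symplectic manifold $(C^0,\Omega^0)$ and the bundle diagram (\ref{Equation 4.7}) -- and your rank-constancy claim, though stated tersely, does follow from cleanness exactly as in the paper's equation (\ref{Equation 4.4}).
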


We call $(C_{\operatorname{red}}, \Omega_{\operatorname{red}})$ the \emph{brane reduction} of $(C, \Omega)$.\par
Before proceeding to the proof, we comment on the real and imaginary parts of $\Omega$ to provide intuition for the theorem. Both $\operatorname{Re} \Omega$ and $\operatorname{Im} \Omega$ are $d$-closed real $2$-forms of constant rank. Furthermore, the $G$-action on $C$ is Hamiltonian with respect to both forms, with $\operatorname{Re} \nu$ and $\operatorname{Im} \nu$ as their respective moment maps. Under the hypotheses of \cite[Theorem 2]{EchMunRom1999} --- including the requirement that $G$ acts on $(\operatorname{Re} \nu)^{-1}(0)$ freely --- we obtain a smooth manifold $(\operatorname{Re} \nu)^{-1}(0) / G$ equipped with a real closed $2$-form $(\operatorname{Re} \Omega)_{\operatorname{red}}$ whose pullback via the quotient map equals the restriction of $\operatorname{Re} \Omega$ to $(\operatorname{Re} \nu)^{-1}(0)$. An analogous construction for $\operatorname{Im} \Omega$ yields a $d$-closed real $2$-form $(\operatorname{Im} \Omega)_{\operatorname{red}}$ on $(\operatorname{Im} \nu)^{-1}(0) / G$. By summing the restrictions of $(\operatorname{Re} \Omega)_{\operatorname{red}}$ and $\sqrt{-1} (\operatorname{Im} \Omega)_{\operatorname{red}}$ to the intersection
\begin{equation*}
	\nu^{-1}(0) / G = ( (\operatorname{Re} \nu)^{-1}(0) / G ) \cap ( (\operatorname{Im} \nu)^{-1}(0) / G ),
\end{equation*}
we define a $d$-closed complex $2$-form $\Omega_{\operatorname{red}}$. Theorem \ref{Theorem 4.2} then asserts that this complex $2$-form is a transverse holomorphic symplectic form on $C_{\operatorname{red}}$. Critically, however, Theorem \ref{Theorem 4.2} does not actually require the full set of assumptions from \cite[Theorem 2]{EchMunRom1999} for its validity.\par
We state a corollary about the dimension of $C_{\operatorname{red}}$ and the rank of $\Omega_{\operatorname{red}}$. Let $\mathcal{F} = (TC)^{\perp \Omega}$ and $I$ be the almost complex structure on $T_{\operatorname{quo}}C = TC / \mathcal{F}$ determined by $\Omega$. Define a $\mathbb{C}$-linear map $\widetilde{\chi}_C: \mathfrak{g}_\mathbb{C} \to \Gamma(C, T_{\operatorname{quo}}C)$ by
\begin{equation}
	\label{Equation 4.1}
	\widetilde{\chi}_C(a + \sqrt{-1} b)(x) = (\chi_C(a)(x) + \mathcal{F}_x) + I(\chi_C(b)(x) + \mathcal{F}_x)
\end{equation}
for all $a, b \in \mathfrak{g}$ and $x \in C$, where $\chi_C$ is the induced infinitesimal $\mathfrak{g}$-action on $C$. For the $G$-action on $C$, by its \emph{transverse complex rank at} $x \in C$, we mean the complex rank of the $\mathbb{C}$-linear map $\mathfrak{g}_\mathbb{C} \to T_{\operatorname{quo}, x}C$ given by $a \mapsto \widetilde{\chi}_C(a)(x)$.

\begin{corollary}
	\label{Corollary 4.3}
	Under the hypothesis of Theorem \ref{Theorem 4.2}, the $G$-action on $C$ is of constant transverse complex rank $r$ on $\nu^{-1}(0)$. Also, $\dim C_{\operatorname{red}} = \dim C - 2r - \dim G$ and $\operatorname{rank} \Omega_{\operatorname{red}} = \operatorname{rank} \Omega - 4r$. In particular, the following inequalities hold:
	\begin{align*}
		\dim C - 3\dim G \leq & \dim C_{\operatorname{red}} \leq \dim C - \dim G,\\
		\operatorname{rank} \Omega - 4 \dim G \leq & \operatorname{rank} \Omega_{\operatorname{red}} \leq \operatorname{rank} \Omega.
	\end{align*}
\end{corollary}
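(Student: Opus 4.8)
The plan is to reduce everything to linear symplectic algebra on the reduced tangent bundle $T_{\operatorname{red}}C$, where $\Omega$ descends to the nondegenerate form $\widetilde{\Omega}$. First I would record the consequence of cleanness. Writing $V_x := \chi_C(\mathfrak{g})(x) \subseteq T_xC$ for the tangent to the $G$-orbit through $x$, the moment map identity $\Omega(\chi_C(a), u) = \langle d\nu(u), a \rangle$ shows that $\ker d\nu_x = V_x^{\perp \Omega}$. Since $0$ is a clean value, $T_x\nu^{-1}(0) = \ker d\nu_x = V_x^{\perp \Omega}$ for every $x \in \nu^{-1}(0)$, and freeness of the action gives $\dim V_x = \dim G$. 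Moreover $\nu^{-1}(0)$ is $G$-invariant, so $V_x \subseteq T_x\nu^{-1}(0) = V_x^{\perp \Omega}$, i.e. the orbit directions are $\Omega$-isotropic.

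Next I would pass to $T_{\operatorname{red}, x}C$. Let $\pi: T_xC \to T_{\operatorname{red},x}C$ be the projection with kernel $\mathcal{F}_x$, put $\widetilde{V}_x := \pi(V_x)$, and let $\widetilde{V}_x^{\mathbb{C}} := \widetilde{V}_x + I\widetilde{V}_x = \widetilde{\chi}_C(\mathfrak{g}_\mathbb{C})(x)$ be its complex span, whose complex dimension is by definition the transverse complex rank $r(x)$. The essential point is that $\widetilde{\Omega}$ is of transverse type $(2,0)$: the defining relation $\operatorname{Re} \widetilde{\Omega}(u,v) = \operatorname{Im} \widetilde{\Omega}(Iu, v)$ forces $\widetilde{\Omega}(Iu, v) = \sqrt{-1}\,\widetilde{\Omega}(u,v)$, so $\widetilde{\Omega}$ is $\mathbb{C}$-bilinear and nondegenerate over $(T_{\operatorname{red}}C, I)$. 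Consequently $\Omega$-orthogonality to $V_x$ is the same as complex $\widetilde{\Omega}$-orthogonality to $\widetilde{V}_x^{\mathbb{C}}$, that is $V_x^{\perp \Omega} = \pi^{-1}\bigl((\widetilde{V}_x^{\mathbb{C}})^{\perp_\mathbb{C}}\bigr)$. Nondegeneracy gives $\dim_\mathbb{C}(\widetilde{V}_x^{\mathbb{C}})^{\perp_\mathbb{C}} = m - r(x)$, where $\operatorname{rank}\Omega = \dim_\mathbb{R} T_{\operatorname{red}}C =: 2m$, whence $\dim T_x\nu^{-1}(0) = \dim \mathcal{F}_x + 2(m - r(x)) = \dim C - 2r(x)$. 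Because $\nu^{-1}(0)$ is a manifold its dimension is locally constant, so $r(x)$ is locally constant; under the usual connectedness convention (otherwise argue componentwise) this is the asserted constant $r$, and $\dim C_{\operatorname{red}} = \dim \nu^{-1}(0) - \dim G = \dim C - 2r - \dim G$.

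For the rank of $\Omega_{\operatorname{red}}$ I would compute its kernel $\mathcal{F}_{\operatorname{red}}$ pointwise. Identifying $T_{[x]}C_{\operatorname{red}} = V_x^{\perp \Omega}/V_x$ (legitimate by orbit-isotropy), the kernel of the descended form is $\bigl(V_x^{\perp \Omega} \cap (V_x^{\perp \Omega})^{\perp \Omega}\bigr)/V_x$. Double-perp in the nondegenerate $\widetilde{\Omega}$ gives $(V_x^{\perp \Omega})^{\perp \Omega} = \pi^{-1}(\widetilde{V}_x^{\mathbb{C}})$. Now $\widetilde{V}_x^{\mathbb{C}}$ is complex isotropic: $\widetilde{\Omega}$ vanishes on the real generators $\widetilde{\chi}_C(a), \widetilde{\chi}_C(b)$ by orbit-isotropy, and $\mathbb{C}$-bilinearity propagates this to all of $\widetilde{V}_x^{\mathbb{C}}$, so $\widetilde{V}_x^{\mathbb{C}} \subseteq (\widetilde{V}_x^{\mathbb{C}})^{\perp_\mathbb{C}}$. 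Hence $V_x^{\perp \Omega} \cap (V_x^{\perp \Omega})^{\perp \Omega} = \pi^{-1}(\widetilde{V}_x^{\mathbb{C}})$, of real dimension $\dim \mathcal{F}_x + 2r$, giving $\dim \mathcal{F}_{\operatorname{red},[x]} = \dim \mathcal{F}_x + 2r - \dim G$. Subtracting,
\[
	\operatorname{rank} \Omega_{\operatorname{red}} = \dim C_{\operatorname{red}} - \dim \mathcal{F}_{\operatorname{red}} = \operatorname{rank} \Omega - 4r,
\]
since the two $\dim G$ terms and the two $\mathcal{F}_x$ terms cancel after inserting the formulas above.

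Finally, both chains of inequalities follow at once from $0 \leq r \leq \dim G$: the upper bound is $r = \dim_\mathbb{C} \widetilde{\chi}_C(\mathfrak{g}_\mathbb{C})(x) \leq \dim_\mathbb{C} \mathfrak{g}_\mathbb{C} = \dim G$, and $r \geq 0$ is trivial. I expect the main obstacle to be the linear algebra of the complex-valued, degenerate form $\Omega$: one must consistently translate the real clean-value and orthogonality conditions into genuine complex-symplectic statements on $(T_{\operatorname{red}}C, I, \widetilde{\Omega})$, and it is precisely the type-$(2,0)$ property together with the isotropy of the complexified orbit directions $\widetilde{V}_x^{\mathbb{C}}$ that makes the counts come out as the even multiples $2r$ and $4r$ rather than something asymmetric.
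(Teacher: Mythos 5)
Your proposal is correct and follows essentially the same route as the paper: the paper's own proof is a three-line citation of Lemma \ref{Lemma 4.5} (rank of $\nu$ equals $2r_x$), Lemma \ref{Lemma 4.6} ($\mathcal{F}_x^0 = \widehat{\mathfrak{g}}_{(x)}$, your $\pi^{-1}(\widetilde{V}_x^{\mathbb{C}})$), the dimension identity (\ref{Equation 4.4}) and the exact sequence (\ref{Equation 4.6}), and your argument simply inlines these via the same complex symplectic linear algebra on $(T_{\operatorname{red}}C, I, \widetilde{\Omega})$ that underlies Proposition \ref{Proposition A.1}. Your explicit remark on the local constancy of $r(x)$ (componentwise if $\nu^{-1}(0)$ is disconnected) is a point the paper glosses over, but it is not a substantive difference in approach.
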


We will provide a proof of this corollary after the proof of Theorem \ref{Theorem 4.2}.

\begin{example}
	\label{Example 4.5}
	Consider $C = \{ (z^1, z^2, w^1, w^2) \in \mathbb{C}^4: (z^1, z^2) \neq (0, 0) \}$ with the holomorphic symplectic form $\Omega = dz^1 \wedge dw^2 + dz^2 \wedge dw^2$ and the holomorphic Hamiltonian $\mathbb{C}^*$-action given by
	\begin{equation*}
		\lambda \cdot (z^1, z^2, w^1, w^2) = (\lambda z^1, \lambda z^2, \lambda^{-1} w^1, \lambda^{-1} w^2).
	\end{equation*}
	The holomorphic moment map $\nu: C \to \mathbb{C}$ given by $(z^1, z^2, w^1, w^2) \mapsto \sqrt{-1} (z^1w^1 + z^2w^2)$ is a moment map in the sense of Definition \ref{Definition 3.1} with respect to the subaction of $G = \mathbb{S}^1$ on $C$. This $\mathbb{S}^1$-action is of transverse complex rank $1$ on $\nu^{-1}(0)$ and $C_{\operatorname{red}} = \nu^{-1}(0) / \mathbb{S}^1 \cong \mathbb{R} \times T^*\mathbb{P}^1$ is $5$-dimensional. Also, $\Omega_{\operatorname{red}}$ is the pullback of a holomorphic symplectic form on $\nu^{-1}(0) / \mathbb{C}^* \cong T^*\mathbb{P}^1$ along the projection $C_{\operatorname{red}} \to \nu^{-1}(0) / \mathbb{C}^*$. Hence, in this case we have
	\begin{equation*}
		\dim C_{\operatorname{red}} = \dim C - 3 \dim G \quad \text{and} \quad \operatorname{rank} \Omega_{\operatorname{red}} = \operatorname{rank} \Omega - 4 \dim G.
	\end{equation*}
\end{example}

Example \ref{Example 4.5} shows that, even when $(C, \Omega)$ is a holomorphic symplectic manifold, its brane reduction $(C_{\operatorname{red}}, \Omega_{\operatorname{red}})$ might not be a holomorphic symplectic manifold.

\begin{example}
	\label{Example 4.6}
	Let $(B, \Omega_B)$ be a transverse holomorphic symplectic manifold and $q: C \to B$ be a principal $G$-bundle. We convert the principal right $G$-action into a left $G$-action by defining $g \cdot x := x \cdot g^{-1}$ for $x \in C$ and $g \in G$. Then $\Omega := q^*\Omega_B$ is a transverse holomorphic symplectic form on $C$ and the left $G$-action is a Hamiltonian action on $(C, \Omega)$ with the constant zero map $\nu: C \to \mathfrak{g}^* \otimes \mathbb{C}$ as a moment map. This $G$-action on $\nu^{-1}(0) = C$ is of transverse complex rank $0$ and $(C_{\operatorname{red}}, \Omega_{\operatorname{red}}) = (B, \Omega_B)$, therefore
	\begin{equation*}
		\dim C_{\operatorname{red}} = \dim C - \dim G \quad \text{and} \quad \operatorname{rank} \Omega_{\operatorname{red}} = \operatorname{rank} \Omega.
	\end{equation*}
\end{example}

Two extreme cases of Example \ref{Example 4.6} are when $\Omega_B = 0$ and when $\Omega_B$ is non-degenerate. The latter case suggests that, even when $\Omega$ is not a holomorphic symplectic form on $C$, Theorem \ref{Theorem 4.2} can yield a holomorphic symplectic manifold $(C_{\operatorname{red}}, \Omega_{\operatorname{red}})$. This example also motivates us to first prove a special case of Theorem \ref{Theorem 4.2}, which is essential to our proof of the general statement.

\begin{lemma}
	\label{Lemma 4.06}
	Let $(C, \Omega)$ be a transverse holomorphic symplectic manifold and let $G$ act on $C$ freely such that the $2$-form $\Omega$ is basic. Then $\Omega$ descends to a transverse holomorphic symplectic form $\Omega_{C/G}$ on $C/G$.
\end{lemma}

\begin{remark}
	The hypothesis of Lemma \ref{Lemma 4.06} is equivalent to that $G$ acts on a transverse holomorphic symplectic manifold $(C, \Omega)$ by a Hamiltonian action with the constant zero map $\nu: C \to \mathfrak{g}^* \otimes \mathbb{C}$ as a moment map such that the hypothesis of Theorem \ref{Theorem 4.2} is satisfied.\par
	Under this hypothesis, it is clear that $\Omega$ descends to a complex $2$-form $\Omega_{C/G}$ on the smooth manifold $C/G$. Given the constant zero map $\nu$, we have $C = \nu^{-1}(0)$. Hence, we will denote $C/G$ by $C_{\operatorname{red}}$ and $\Omega_{C/G}$ by $\Omega_{\operatorname{red}}$ in the proof of Lemma \ref{Lemma 4.06}.
\end{remark}

\begin{proof}[\myproof{Lemma}{\ref{Lemma 4.06}}]
	As $d \Omega = 0$ and $q$ is a smooth submersion, $d\Omega_{\operatorname{red}} = 0$. It remains to show that $\Omega_{\operatorname{red}}$ is a transverse holomorphic symplectic form on $C_{\operatorname{red}}$. We first claim that $\mathcal{F}_{\operatorname{red}} := (TC_{\operatorname{red}})^{\perp \Omega_{\operatorname{red}}}$ is a vector subbundle of $TC_{\operatorname{red}}$. Fix $x \in C$ and let $y = q(x)$. Here, $q$ denotes the quotient map $C \to C_{\operatorname{red}}$. Let $\mathfrak{g}_{(x)} = \{ \chi_C(a)(x): a \in \mathfrak{g} \}$. Observe that the following is a short exact sequence of real vector spaces:
	\begin{equation}
		\label{Equation 4.6}
		\begin{tikzcd}
			\mathfrak{g}_{(x)} \ar[r] & \mathcal{F}_x \ar[r, "dq_x"] & \mathcal{F}_{\operatorname{red}, y}
		\end{tikzcd}
	\end{equation}
	Since $G$ acts on $C$ freely, $\dim \mathfrak{g}_{(x)} = \dim G$ is independent of $x \in C$. Thus, our claim holds.\par
	Let $T_{\operatorname{quo}}C_{\operatorname{red}} = TC_{\operatorname{red}} / \mathcal{F}_{\operatorname{red}}$. Then $dq: TC \to q^*TC_{\operatorname{red}}$ descends to a vector bundle morphism $\widetilde{dq}: T_{\operatorname{quo}}C \to q^* T_{\operatorname{quo}}C_{\operatorname{red}}$ such that the following diagram is commutative:
	\begin{equation}
		\label{Equation 4.7}
		\begin{tikzcd}
			\mathfrak{g}_C \ar[r] \ar[d] & \mathcal{F} \ar[r, "dq"] \ar[d] & q^*\mathcal{F}_{\operatorname{red}} \ar[d]\\
			\mathfrak{g}_C \ar[r] \ar[d] & TC \ar[r, "dq"] \ar[d] & q^*TC_{\operatorname{red}} \ar[d]\\
			0 \ar[r] & T_{\operatorname{quo}}C \ar[r, "\widetilde{dq}"] & q^*T_{\operatorname{quo}}C_{\operatorname{red}}
		\end{tikzcd}
	\end{equation}
	Here, $\mathfrak{g}_C$ is the image of the vector bundle morphism $C \times \mathfrak{g} \to TC$ given by $(x, a) \mapsto \chi_C(a)(x)$. Since the top two rows and all the columns in the above diagram are short exact sequences of vector bundles, $\widetilde{dq}: T_{\operatorname{quo}}C \to q^* T_{\operatorname{quo}}C_{\operatorname{red}}$ is a vector bundle isomorphism.\par
	As the almost complex structure $I$ on $T_{\operatorname{quo}}C$ is $G$-invariant, it induces an almost complex structure $I_{\operatorname{red}}$ on $T_{\operatorname{quo}}C_{\operatorname{red}}$ such that $\widetilde{dq} \circ I = I_{\operatorname{red}} \circ \widetilde{dq}$. On the other hand, $\Omega_{\operatorname{red}}$ descends to a $2$-form $\widetilde{\Omega}_{\operatorname{red}}$ on $T_{\operatorname{quo}}C_{\operatorname{red}}$. Indeed, for all $x \in C$ and $u, v \in T_{\operatorname{quo}, x}C$,
	\begin{equation*}
		\widetilde{\Omega}_{\operatorname{red}}(\widetilde{dq}(u), \widetilde{dq}(v)) = \widetilde{\Omega}(u, v).
	\end{equation*}
	It forces that for all $u, v \in \Gamma(C_{\operatorname{red}}, T_{\operatorname{quo}}C_{\operatorname{red}})$, $\widetilde{\Omega}_{\operatorname{red}}(I_{\operatorname{red}}(u), v) = \sqrt{-1} \widetilde{\Omega}_{\operatorname{red}}(u, v)$. To conclude, $\Omega_{\operatorname{red}}$ is a transverse holomorphic symplectic form on $C_{\operatorname{red}}$.
\end{proof}

Here come preparations for our proof of the general statement. For $x \in C$, let
\begin{equation*}
	\mathfrak{g}_{(x)} = \{ \chi_C(a)(x): a \in \mathfrak{g} \}
\end{equation*}
and $\widehat{\mathfrak{g}}_{(x)}$ be the preimage of $\{ \widetilde{\chi}_C(a)(x): a \in \mathfrak{g}_\mathbb{C} \}$ under the canonical projection $T_xC \to T_{\operatorname{quo}, x}C$. Clearly, $\mathcal{F}_x \subset \widehat{\mathfrak{g}}_{(x)}$. The following lemma does not require the hypothesis in Theorem \ref{Theorem 4.2}.

\begin{lemma}
	\label{Lemma 4.5}
	Let $x \in C$ and $\mathcal{F}_x^0 = (\ker d\nu_x) \cap (\ker d\nu_x)^{\perp \Omega}$. Then the following statements hold.
	\begin{enumerate}
		\item \label{Lemma 4.5(1)} $\mathcal{F}_x \subset \mathcal{F}_x^0 \subset \ker d\nu_x$, $\ker d\nu_x = \widehat{\mathfrak{g}}_{(x)}^{\perp \Omega}$ and $\mathcal{F}_x^0 = ( \widehat{ \mathfrak{g}}_{(x)} + \ker d\nu_x )^{\perp \Omega}$.
		\item $\ker d\nu_x / \mathcal{F}_x$ and $\mathcal{F}_x^0 / \mathcal{F}_x$ are complex vector subspaces of $T_{\operatorname{quo}, x}C = T_xC / \mathcal{F}_x$.
		\item The rank of $\nu$ at $x$ is $2r_x$, where $r_x$ is the transverse complex rank of the $G$-action at $x$.
	\end{enumerate}
\end{lemma}
\begin{proof}
	\begin{enumerate}
		\item It is clear that $\mathcal{F}_x^0 \subset \ker d\nu_x$ and $\mathcal{F}_x \subset (\ker d\nu_x)^{\perp \Omega}$. Fix $u \in T_xC$. For all $a, b \in \mathfrak{g}$,
		\begin{equation*}
			\langle d\nu(u), a + \sqrt{-1} b \rangle = \Omega(\chi_C(a)(x), u) + \sqrt{-1} \Omega(\chi_C(b)(x), u).
		\end{equation*}
		Thus, $\ker d\nu_x = \mathfrak{g}_{(x)}^{\perp \Omega} \supset \mathcal{F}_x$. It implies that $\mathcal{F}_x \subset \mathcal{F}_x^0$. On the other hand, by Proposition \ref{Proposition A.1}, $\ker d\nu_x = \widehat{\mathfrak{g}}_{(x)}^{\perp \Omega}$. Then $\mathcal{F}_x^0 = \widehat{\mathfrak{g}}_{(x)}^{\perp \Omega} \cap (\ker d\nu_x)^{\perp \Omega} = ( \widehat{ \mathfrak{g}}_{(x)} + \ker d\nu_x )^{\perp \Omega}$.
		\item It directly follows from the above argument and Proposition \ref{Proposition A.1}.
		\item By Proposition \ref{Proposition A.1}, $\dim \ker d\nu_x + \dim \widehat{\mathfrak{g}}_{(x)} = \dim C + \dim \mathcal{F}_x$. Then
		\begin{equation*}
			\dim d\nu_x (T_xC) = \dim \widehat{\mathfrak{g}}_{(x)} - \dim \mathcal{F}_x = 2r_x.
		\end{equation*}
		Note that here $\dim W$ denotes the real dimension of a real vector space $W$.
	\end{enumerate}
\end{proof}

In particular, by Lemma \ref{Lemma 4.5} \eqref{Lemma 4.5(1)}, $d\nu$ descends to a map
\begin{equation*}
	\widetilde{d\nu}: T_{\operatorname{quo}}C \to \mathfrak{g}^* \otimes \mathbb{C},
\end{equation*}
which satisfies the following property: $\widetilde{d\nu}(Iu) = \sqrt{-1} \widetilde{d\nu}(u)$ for all $u \in \Gamma(C, T_{\operatorname{quo}}C)$. Now, let $\mathcal{F}^0 = (TC^0)^{\perp \Omega^0}$ and $\mathcal{E}^0 = \textstyle\bigsqcup_{x \in C^0} \mathcal{E}_x^0$, where
\begin{equation*}
	\mathcal{E}_x^0 := \{ u \in T_xC^0 \otimes \mathbb{C}: \Omega^0(u, v) = 0 \text{ for all } v \in T_xC^0 \otimes \mathbb{C} \}.
\end{equation*}

\begin{lemma}
	\label{Lemma 4.6}
	Under the hypothesis of Theorem \ref{Theorem 4.2}, $(C^0, \Omega^0)$ is a transverse holomorphic symplectic manifold. Moreover,
	\begin{align}
		\mathcal{F}_x^0 = & \widehat{\mathfrak{g}}_{(x)} \quad \text{for all } x \in C^0,\\
		\label{Equation 4.2}
		\mathcal{E}^0 = & \mathcal{F}^0 \otimes \mathbb{C} + \mathcal{E} \cap (TC^0 \otimes \mathbb{C}).
	\end{align}
	In particular, $\Omega^0$ is a basic $2$-form with respect to the inherited $G$-action on $C^0$.
\end{lemma}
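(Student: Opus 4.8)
The plan is to prove the three assertions in the order: first the pointwise identity $\mathcal{F}_x^0 = \widehat{\mathfrak{g}}_{(x)}$, then the transverse holomorphic symplectic structure on $C^0$, and finally (\ref{Equation 4.2}), since the first identity is the linchpin for the rest. By Lemma \ref{Lemma 4.5}(1) I already have $\mathcal{F}_x^0 = (\widehat{\mathfrak{g}}_{(x)} + \ker d\nu_x)^{\perp \Omega}$ together with $\ker d\nu_x = \widehat{\mathfrak{g}}_{(x)}^{\perp \Omega}$, so by Proposition \ref{Proposition A.1} (the double-perp formula, using $\mathcal{F}_x \subset \widehat{\mathfrak{g}}_{(x)}$) this collapses to $\mathcal{F}_x^0 = \widehat{\mathfrak{g}}_{(x)}^{\perp \Omega} \cap \widehat{\mathfrak{g}}_{(x)} = \ker d\nu_x \cap \widehat{\mathfrak{g}}_{(x)}$. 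Hence the whole identity reduces to the single inclusion $\widehat{\mathfrak{g}}_{(x)} \subset \ker d\nu_x$, i.e. to showing that $\widehat{\mathfrak{g}}_{(x)}$ is $\Omega$-isotropic at every $x \in C^0$.

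I would establish this isotropy in two stages, and I expect the second stage to be the conceptual crux of the whole lemma. First, for $a, b \in \mathfrak{g}$ the moment map identity of Definition \ref{Definition 3.1} gives $\Omega(\chi_C(a)(x), \chi_C(b)(x)) = \langle d\nu_x(\chi_C(b)(x)), a \rangle$, and $G$-equivariance of $\nu$ identifies the right-hand side with the infinitesimal coadjoint action of $b$ on $\nu(x)$, which vanishes because $\nu(x) = 0$. Thus $\mathfrak{g}_{(x)}$, and hence its image $V \subset T_{\operatorname{red}, x}C$, is isotropic for the descended form $\widetilde{\Omega}$. Second, I upgrade this to $\widehat{\mathfrak{g}}_{(x)}$, whose image in $T_{\operatorname{red}, x}C$ is exactly $V + IV$: since the defining condition of Definition \ref{Definition 2.1} is equivalent to $\widetilde{\Omega}$ being of transverse type $(2,0)$, i.e. $\widetilde{\Omega}(Iu, v) = \sqrt{-1}\,\widetilde{\Omega}(u, v)$, isotropy of $V$ propagates, because for $u, v \in V$ one has $\widetilde{\Omega}(Iu, v) = \widetilde{\Omega}(u, Iv) = \sqrt{-1}\,\widetilde{\Omega}(u, v) = 0$ and $\widetilde{\Omega}(Iu, Iv) = -\widetilde{\Omega}(u, v) = 0$. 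This yields $\widehat{\mathfrak{g}}_{(x)} \subset \widehat{\mathfrak{g}}_{(x)}^{\perp \Omega} = \ker d\nu_x$ and so $\mathcal{F}_x^0 = \widehat{\mathfrak{g}}_{(x)}$.

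For the transverse holomorphic symplectic structure, closedness of $\Omega^0 = \Omega|_{C^0}$ is immediate. Since $0$ is a clean value, $C^0$ is a submanifold with $T_x C^0 = \ker d\nu_x$ of constant dimension; combined with Lemma \ref{Lemma 4.5}(3) this forces the transverse complex rank $r_x$ to be constant, equal to $r := \tfrac{1}{2}(\dim C - \dim C^0)$, so that $\dim \mathcal{F}_x^0 = \dim \widehat{\mathfrak{g}}_{(x)} = \dim \mathcal{F}_x + 2r$ is constant and $\mathcal{F}^0$ is a subbundle (this also delivers the rank statements of Corollary \ref{Corollary 4.3}). By Lemma \ref{Lemma 4.5}(2), both $\ker d\nu_x / \mathcal{F}_x$ and $\widehat{\mathfrak{g}}_{(x)} / \mathcal{F}_x$ are $I$-complex subspaces of $T_{\operatorname{red}, x}C$, so $T_{\operatorname{red}, x}C^0 \cong (\ker d\nu_x / \mathcal{F}_x) / (\widehat{\mathfrak{g}}_{(x)} / \mathcal{F}_x)$ inherits a quotient complex structure $I^0$; the descended $\widetilde{\Omega^0}$ is then a nondegenerate $(2,0)$-form for $I^0$, which is precisely the assertion that $I^0$ is the almost complex structure associated to $\Omega^0$ in the sense of Definition \ref{Definition 2.1}. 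Integrability is automatic from $d$-closedness by Remark \ref{Remark 2.2}.

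Finally, for (\ref{Equation 4.2}) the inclusion $\supseteq$ is formal: $\mathcal{F}^0 \otimes \mathbb{C}$ lies in the complex radical $\mathcal{E}^0$ of $\Omega^0$, and any $v \in \mathcal{E} \cap (TC^0 \otimes \mathbb{C})$ annihilates all of $TC \otimes \mathbb{C}$, a fortiori $TC^0 \otimes \mathbb{C}$. For the reverse inclusion I argue structurally: by Remark \ref{Remark 2.2}, $\mathcal{E}^0$ is the preimage of $(T_{\operatorname{red}}C^0)^{0,1}$ under $TC^0 \otimes \mathbb{C} \to T_{\operatorname{red}}C^0 \otimes \mathbb{C}$, and writing $(T_{\operatorname{red}}C^0)^{0,1} = (\ker d\nu_x / \mathcal{F}_x)^{0,1} / (\widehat{\mathfrak{g}}_{(x)} / \mathcal{F}_x)^{0,1}$ through the quotient of the previous paragraph identifies this preimage with the sum of $\big(\mathcal{F}_x \otimes \mathbb{C} + \text{lift of } (\ker d\nu_x / \mathcal{F}_x)^{0,1}\big)$ and $\widehat{\mathfrak{g}}_{(x)} \otimes \mathbb{C}$; the first summand is exactly $\mathcal{E} \cap (TC^0 \otimes \mathbb{C})$ and the second is $\mathcal{F}^0 \otimes \mathbb{C}$. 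As a cross-check one can instead compare dimensions, both sides having complex dimension $\tfrac{1}{2}(\dim C + \operatorname{rank}\mathcal{F})$. The delicate point throughout is that $\Omega$ is $\mathbb{C}$-valued, so the complex radical $\mathcal{E}$ is strictly larger than $\mathcal{F} \otimes \mathbb{C}$ and $\perp\Omega$ does not commute with complexification; the transverse $(2,0)$-type of $\widetilde{\Omega}$ is precisely what tames this, both in upgrading isotropy from $\mathfrak{g}_{(x)}$ to $\widehat{\mathfrak{g}}_{(x)}$ and in controlling $\mathcal{E}^0$ via the quotient complex structure.
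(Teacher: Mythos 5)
Your proposal is correct and follows essentially the same route as the paper: reduce everything to the identity $\mathcal{F}_x^0 = \widehat{\mathfrak{g}}_{(x)}$ via Lemma \ref{Lemma 4.5} and Proposition \ref{Proposition A.1}, establish constancy of $\dim \mathcal{F}_x^0$ by the same dimension count, endow $T_{\operatorname{red}}C^0$ with the quotient complex structure $I^0$, and read off (\ref{Equation 4.2}) from the resulting short exact sequence together with Remark \ref{Remark 2.2}. The only divergence is cosmetic: you derive the key inclusion $\widehat{\mathfrak{g}}_{(x)} \subset \ker d\nu_x$ from isotropy of $\mathfrak{g}_{(x)}$ (differentiating the equivariance of $\nu$ at a point where $\nu$ vanishes) upgraded by the transverse $(2,0)$-type of $\widetilde{\Omega}$, whereas the paper gets it from $G$-invariance of $C^0$, cleanness ($T_xC^0 = \ker d\nu_x$) and Lemma \ref{Lemma 4.5}(2) --- two phrasings of the same computation.
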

\begin{proof}
	We have $d\Omega^0 = (d\Omega) \vert_{C^0} = 0$. We claim that $\mathcal{F}^0$ is a vector bundle. Fix $x \in C^0$. Observe that $\mathcal{F}_x^0 = T_xC^0 \cap (T_xC^0)^{\perp \Omega}$. As $0$ is a clean value of $\nu$, $T_xC^0 = \ker d\nu_x$. Applying Lemma \ref{Lemma 4.5}, we obtain $\mathcal{F}_x^0 = (\widehat{\mathfrak{g}}_{(x)} + T_xC^0)^{\perp \Omega}$. Since $C^0$ is $G$-invariant, $\mathfrak{g}_{(x)} \subset T_xC^0$. According to Lemma \ref{Lemma 4.5} again, $T_xC^0 / \mathcal{F}_x$ is a complex vector subspace of $T_{\operatorname{quo}, x}C$, whence $\widehat{\mathfrak{g}}_{(x)} \subset T_xC^0$. We thus have
	\begin{equation}
		\mathcal{F}_x^0 = (T_xC^0)^{\perp \Omega} = \widehat{\mathfrak{g}}_{(x)}.
	\end{equation}
	By Proposition \ref{Proposition A.1},
	\begin{equation}
		\label{Equation 4.4}
		\dim \mathcal{F}_x^0 + \dim C^0 = \dim C + \dim \mathcal{F}_x.
	\end{equation}
	We can then see from (\ref{Equation 4.4}) that $\dim \mathcal{F}_x^0$ is independent of $x \in C^0$. Our claim holds.\par
	Lemma \ref{Lemma 4.5} states that $\mathcal{F} \vert_{C^0} \subset \mathcal{F}^0 \subset TC^0$, and that $\mathcal{F}^0 / (\mathcal{F} \vert_{C^0})$ and $TC^0 / (\mathcal{F} \vert_{C^0})$ are complex vector subbundles of $(T_{\operatorname{quo}}C, I)$. There is a short exact sequence of real vector bundles over $C^0$:
	\begin{equation}
		\label{Equation 4.5}
		\begin{tikzcd}
			\mathcal{F}^0 / (\mathcal{F} \vert_{C^0}) \ar[r] & TC^0 / (\mathcal{F} \vert_{C^0}) \ar[r] & TC^0 / \mathcal{F}^0 =: T_{\operatorname{quo}}C^0
		\end{tikzcd}
	\end{equation}
	Hence, $I$ induces an almost complex structure $I^0$ on $T_{\operatorname{quo}}C^0$. Note that $\Omega^0$ descends to a $2$-form $\widetilde{\Omega}^0$ on $T_{\operatorname{quo}}C^0$. We can then clearly see that, for all $u, v \in \Gamma(C^0, T_{\operatorname{quo}}C^0)$,
	\begin{equation*}
		\widetilde{\Omega}^0 (I^0 u, v ) = \sqrt{-1} \widetilde{\Omega}^0 ( u, v ).
	\end{equation*}
	In conclusion, $\Omega^0$ is a transverse holomorphic form on $C^0$. Recalling the relationship among $\mathcal{F}, \mathcal{E}, I$ (resp. $\mathcal{F}^0, \mathcal{E}^0, I^0$) in Remark \ref{Remark 2.2}, we can easily deduce the equality (\ref{Equation 4.2}) from (\ref{Equation 4.5}).\par
	As $\Omega$ is $G$-invariant, so is $\Omega^0$. Moreover, for all $x \in C^0$,
	\begin{equation*}
		\mathfrak{g}_{(x)} \subset \widehat{\mathfrak{g}}_{(x)} = \mathcal{F}_x^0 = (T_xC^0)^{\perp \Omega^0}.
	\end{equation*}
	It implies that the $2$-form $\Omega^0$ is basic.
\end{proof}

\begin{proof}[\myproof{Theorem}{\ref{Theorem 4.2}}]
	By hypothesis, $G$ acts on $C^0$ freely. By Lemma \ref{Lemma 4.6}, $\Omega^0$ is a transverse holomorphic symplectic form on $C^0$ and is basic. Hence, applying Lemma \ref{Lemma 4.06} to $(C^0, \Omega^0)$ concludes the proof.
\end{proof}

\begin{remark}
	\label{Remark 4.7}
	Applying (\ref{Equation 4.7}) to $(C^0, \Omega^0)$ shows that we have a short exact sequence of complex vector bundles over $C^0$:
	\begin{center}
		\begin{tikzcd}
			\mathfrak{g}_{C^0} \otimes \mathbb{C} \ar[r] & \mathcal{E}^0 \ar[r, "dq"] & q^*\mathcal{E}_{\operatorname{red}}
		\end{tikzcd}
	\end{center}
	Here, $\mathfrak{g}_{C^0}$ is the image of the vector bundle morphism $C^0 \times \mathfrak{g} \to TC^0$ given by $(x, a) \mapsto \chi_C(a)(x)$.
\end{remark}

\begin{remark}
	As shown in the proof of Theorem \ref{Theorem 4.2}, $\Omega_{\operatorname{red}}$ is a holomorphic symplectic form on $C_{\operatorname{red}}$ if and only if $\mathfrak{g}_{(x)} = \widehat{\mathfrak{g}}_{(x)}$ for all $x \in C^0$.
\end{remark}

\begin{proof}[\myproof{Corollary}{\ref{Corollary 4.3}}]
We know $\dim \nu^{-1}(0) = \dim C - 2r$. Thus, $\dim C_{\operatorname{red}} = \dim C - 2r - \dim G$. By (\ref{Equation 4.4}) and (\ref{Equation 4.6}), $\dim \mathcal{F}_{\operatorname{red}} = \dim \mathcal{F} + 2r - \dim G$. Thus, $\operatorname{rank} \Omega_{\operatorname{red}} = \operatorname{rank} \Omega - 4r$.
\end{proof}

\subsection{Brane reductions, holomorphic symplectic reductions and hyperK\"ahler quotients}
\label{Subsection 4.02}
\quad\par
A compelling application of brane reduction arises when $(C, \Omega)$ is a holomorphic symplectic manifold. In algebraic geometry, one typically considers a reductive group $G_\mathbb{C}$ acting on a smooth holomorphic symplectic variety $(C, \Omega)$ via a Hamiltonian action with moment map $\nu$. Provided a suitable notion of semi-stability exists (or in the affine case), the GIT quotient $\nu^{-1}(0) \sslash G_\mathbb{C}$ defines the \emph{holomorphic symplectic reduction} of $(C, \Omega)$ (see \cite{Kim2016, LeuXieYau2026, Nak1994}).\par
To align with our differential-geometric approach, we compare our results to an analytic analogue of this GIT quotient. Let $C$ be a connected hyperK\"ahler manifold with a triple of complex structures $(I, J, K)$ and associated K\"ahler forms $\omega_I, \omega_J, \omega_K$. Let $G$ act on $(C, \omega_I)$ by a Hamiltonian action with moment map $\mu_I$, and let this action extend to a Hamiltonian $G_\mathbb{C}$-action on $(C, \Omega)$, where $\Omega := \omega_J + \sqrt{-1}\omega_K$, with moment map $\nu = \mu_J + \sqrt{-1} \mu_K: C \to \mathfrak{g}^* \otimes \mathbb{C}$.

\begin{remark}
	By the principal orbit type theorem, if there exists at least one point in $C$ with a trivial stabilizer, then there exists an open dense subset $C^{\operatorname{free}} \subset C$ on which $G$ acts freely. Consequently, $C^{\operatorname{free}} \cap \nu^{-1}(0)$ is an open dense subset of $\nu^{-1}(0)$ where the $G$-action remains free.
\end{remark}

Now, we assume for simplicity that the inherited $G$-action on $C^0 := \nu^{-1}(0)$ is free. Since $(C, I, \omega_I)$ is K\"ahler, for all $x \in C^0$, $\mathfrak{g}_{(x)} \cap I \mathfrak{g}_{(x)} = 0$ and therefore $(\ker d\nu_x)^{\perp \Omega} = \mathfrak{g}_{(x)} \oplus I \mathfrak{g}_{(x)}$, where $\mathfrak{g}_{(x)} := \{ \chi_C(a)(x): a \in \mathfrak{g} \}$. By counting dimensions, we deduce that $0$ is a regular value of $\nu$. Then we obtain a K\"ahler submanifold $C^0$ of $(C, I, \omega_I)$. Applying Theorem \ref{Theorem 4.2}, we obtain the brane reduction $(C_{\operatorname{red}}, \Omega_{\operatorname{red}})$ as a quotient space.\par
This analytic construction is intimately related to the hyperK\"ahler quotient. Note that $G$ acts freely on the level set $\mu_{\operatorname{HK}}^{-1}(0) = \mu_I^{-1}(0) \cap C^0$ , where $\mu_{\operatorname{HK}} := (\mu_I, \mu_J, \mu_K): C \to \mathfrak{g}^* \otimes \mathbb{R}^3$ denotes the hyperK\"ahler moment map. We obtain the hyperK\"ahler quotient $C \tripleslash G := \mu_{\operatorname{HK}}^{-1}(0) / G$, which carries a natural hyperK\"ahler structure $(\omega_{C \tripleslash G, I}, \omega_{C \tripleslash G, J}, \omega_{C \tripleslash G, K})$ \cite{HitKarLinRov1987}.\par
To connect this to the algebraic viewpoint, consider any point $x \in C^0$. Denote by $\overline{G_\mathbb{C} \cdot x}$ the closure of the $G_\mathbb{C}$-orbit $G_\mathbb{C} \cdot x$ in $C^0$. Following Subsection 2.1 of \cite{Sja1995}, we say $x$ is
\begin{itemize}
	\item \emph{semistable} if $\overline{G_\mathbb{C} \cdot x} \cap \mu_{\operatorname{HK}}^{-1}(0) \neq \emptyset$;
	\item \emph{stable} if there exists $y \in \overline{G_\mathbb{C} \cdot x} \cap \mu_{\operatorname{HK}}^{-1}(0)$ such that $d(\mu_I \vert_{C^0})_y: T_yC^0 \to \mathfrak{g}^*$ is surjective.
\end{itemize}
Denote by $(C^0)^{\operatorname{ss}}$ (resp. $(C^0)^{\operatorname{s}}$) the set of semistable (resp. stable) points in $C^0$. Indeed, as $G$ acts on $\mu_{\operatorname{HK}}^{-1}(0)$ freely, $0$ is a regular value of $\mu_I \vert_{C^0}$. It implies that $(C^0)^{\operatorname{ss}} = (C^0)^{\operatorname{s}}$.\par
Introduce a $G$-invariant inner product on $\mathfrak{g}$ so that we obtain the norm-squared of $\mu_I \vert_{C^0}$, i.e. the map $\lVert \mu_I \vert_{C^0} \rVert^2: C^0 \to \mathbb{R}$. We also say the moment map $\mu_I \vert_{C^0}$ is \emph{admissible} if for all point $x \in C^0$, the path of steepest descent $\phi_t(x)$ through $x$ is contained in a compact set, where $\phi_t$ denotes the negative gradient flow of $\lVert \mu_I \vert_{C^0} \rVert^2$. For instance, if $\mu_I \vert_{C^0}$ is proper, then it is admissible. When $\mu_I \vert_{C^0}$ is admissible, we obtain a K\"ahler manifold $(C^0)^{\operatorname{s}} / G_\mathbb{C}$ as a geometric quotient and a diffeomorphism $C \tripleslash G \cong (C^0)^{\operatorname{s}} / G_\mathbb{C}$ \cite{Sja1995}.

\begin{proposition}
	Under the assumption that the $G$-action on $C^0 := \nu^{-1}(0)$ is free, the hyperK\"ahler quotient $C \tripleslash G$ is a submanifold of $C_{\operatorname{red}}$ and
	\begin{equation}
		\label{Equation 4.9}
		\Omega_{\operatorname{red}} \vert_{C \tripleslash G} = \omega_{C \tripleslash G, J} + \sqrt{-1} \omega_{C \tripleslash G, K}.
	\end{equation}
	Assume, in addition, $\mu_I \vert_{C^0}$ is admissible. Then $(C^0)^{\operatorname{s}} / G$ is an open dense subset of $C_{\operatorname{red}}$ containing $C \tripleslash G$, the canonical projection $\pi: (C^0)^{\operatorname{s}} / G \to (C^0)^{\operatorname{s}} / G_\mathbb{C} \cong C \tripleslash G$ exhibits the structure of a real vector bundle of rank $\dim G$, with the inclusion $C \tripleslash G \to (C^0)^{\operatorname{s}} / G$ serving as the zero section, and
	\begin{equation}
		\label{Equation 4.10}
		\Omega_{\operatorname{red}} \vert_{(C^0)^{\operatorname{s}} / G} = \pi^*( \omega_{C \tripleslash G, J} + \sqrt{-1} \omega_{C \tripleslash G, K} ).
	\end{equation}
\end{proposition} 
\begin{proof}
	Since $\mu_{\operatorname{HK}}^{-1}(0)$ is a submanifold of $C^0 = \nu^{-1}(0)$, $C \tripleslash G = \mu_{\operatorname{HK}}^{-1}(0) / G$ is a submanifold of $C_{\operatorname{red}} = C^0/G$. The equality $\Omega \vert_{\mu_{\operatorname{HK}}^{-1}(0)} = \omega_J \vert_{\mu_{\operatorname{HK}}^{-1}(0)} + \sqrt{-1} \omega_K \vert_{\mu_{\operatorname{HK}}^{-1}(0)}$ implies \eqref{Equation 4.9}.\par
	Now, suppose that $\mu_I \vert_{C^0}$ is admissible. By Proposition 2.2 of \cite{Sja1995}, $(C^0)^{\operatorname{s}} = (C^0)^{\operatorname{ss}}$ is a $G_\mathbb{C}$-invariant open subset of $C^0$. Its complement is a complex-analytic subset of $C^0$. Thus, $(C^0)^{\operatorname{s}} / G$ is an open dense subset of $C_{\operatorname{red}}$. Consider the $G$-equivariant smooth map
	\begin{equation}
		\label{Equation 4.11}
		\mathfrak{g} \times \mu_{\operatorname{HK}}^{-1}(0) \to C^0, \quad (a, x) \mapsto \exp(\sqrt{-1}a) \cdot x.
	\end{equation}
	Here, $G$ acts on $\mathfrak{g} \times \mu_{\operatorname{HK}}^{-1}(0)$ by $g \cdot (a, x) := (\operatorname{Ad}(g)(a), g \cdot x)$ and $\operatorname{Ad}$ denotes the adjoint action. By Proposition 1.6 of \cite{Sja1995}, since $G$ acts on $C^0$ freely, so does $G_\mathbb{C}$. By Proposition 2.2 of \cite{Sja1995}, if a $G_\mathbb{C}$-orbit intersects $\mu_{\operatorname{HK}}^{-1}(0)$, then their intersection consists of precisely one $G$-orbit. Then by Cartan decomposition of $G_\mathbb{C}$, i.e. the diffeomorphism $G \times \mathfrak{g} \to G_\mathbb{C}$ given by $(g, a) \mapsto g \exp(\sqrt{-1}a)$, we deduce that \eqref{Equation 4.11} is a smooth embedding.\par
	Proposition 2.2 of \cite{Sja1995} together with Theorem 2.8 of \cite{Sja1995} implies that the image of \eqref{Equation 4.11} is precisely $(C^0)^{\operatorname{s}}$. Taking $G$-quotients, we obtain the associated vector bundle $(\mathfrak{g} \times \mu_{\operatorname{HK}}^{-1}(0))/G$ over $C \tripleslash G$ of rank equal to $\dim G$ and a diffeomorphism
	\begin{equation}
		\label{Equation 4.12}
		(\mathfrak{g} \times \mu_{\operatorname{HK}}^{-1}(0))/G \to (C^0)^{\operatorname{s}} / G
	\end{equation}
	such that the composition of the zero section $C \tripleslash G \to (\mathfrak{g} \times \mu_{\operatorname{HK}}^{-1}(0))/G$ with \eqref{Equation 4.12} is the inclusion map $C \tripleslash G \hookrightarrow (C^0)^{\operatorname{s}} / G$. Under the identifications \eqref{Equation 4.12} and $C \tripleslash G \cong (C^0)^{\operatorname{s}} / G_\mathbb{C}$, the bundle projection $(\mathfrak{g} \times \mu_{\operatorname{HK}}^{-1}(0))/G \to C \tripleslash G$ coincides with the canonical projection $(C^0)^{\operatorname{s}} / G \to (C^0)^{\operatorname{s}} / G_\mathbb{C}$.\par
	Applying Lemma \ref{Lemma 4.5} \eqref{Lemma 4.5(1)}, we deduce that $\Omega \vert_{C^0}$ is a basic $2$-form with respect to the $G_\mathbb{C}$-action. Finally, using \eqref{Equation 4.9}, we can verify that \eqref{Equation 4.10} holds.
\end{proof}

\subsection{Descending $G$-invariant connections onto the quotient space}
\label{Subsection 4.2}
\quad\par
Let $C$ be a smooth $G$-manifold and $(E, \nabla)$ be a $G$-equivariant Hermitian vector bundle over $C$. Denote by $\chi_C$ the induced infinitesimal $\mathfrak{g}$-action on $C$. We say that $\nabla$ \emph{restricts to} the infinitesimal $\mathfrak{g}$-action on $E$ if $\nabla_{\chi_C(a)} = \mathcal{L}_a^E$ for all $a \in \mathfrak{g}$.\par
On the other hand, when $G$ acts on $C$ freely, we say that $(E, \nabla)$ \emph{descends to} a Hermitian vector bundle $(E', \nabla')$ over $C/G$ if $(E, \nabla)$ is isomorphic to, as $G$-equivariant Hermitian vector bundles, the pullback of $(E', \nabla')$ along the quotient map $q: C \to C/G$. Now we give a necessary and sufficient condition for that $(E, \nabla)$ descends to a Hermitian vector bundle $(E', \nabla')$.

\begin{proposition}
	\label{Proposition 4.8}
	Suppose that $G$ acts on $C$ freely. Then $(E, \nabla)$ descends to a Hermitian vector bundle $(E', \nabla')$ over $C/G$ if and only if $\nabla$ restricts to the infinitesimal $\mathfrak{g}$-action on $E$.
\end{proposition}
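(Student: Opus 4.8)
The plan is to use that a free action of the compact group $G$ makes $q \colon C \to C/G$ a principal $G$-bundle, so that the $G$-equivariant Hermitian bundle $(E, \nabla)$ automatically descends \emph{as a Hermitian bundle}: the quotient $E' := E/G$ is a Hermitian vector bundle over $C/G$ with $E \cong q^* E'$ as $G$-equivariant Hermitian bundles (the $G$-invariant metric passing to the quotient), and the global sections of $E'$ correspond precisely to the $G$-invariant sections of $E$, i.e.\ to sections of the form $q^* s'$. The whole content of the proposition therefore concerns the connection. First I would record the one structural fact that drives both implications: by the definition of $\mu^\nabla$ preceding Proposition \ref{Proposition 3.2}, the operator $\mathcal{L}_a^E - \nabla_{\chi_C(a)}$ is \emph{tensorial}, equal to the bundle endomorphism $\langle \mu^\nabla, a \rangle \in \Gamma(C, \operatorname{End}(E))$. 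Thus the hypothesis ``$\nabla$ restricts to the infinitesimal $\mathfrak{g}$-action'' is literally the vanishing $\mu^\nabla = 0$, and I will exploit that $\langle \mu^\nabla, a\rangle$ depends only on the pointwise value of a section.

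For the forward direction, assume $(E, \nabla) \cong q^*(E', \nabla')$. A fundamental vector field $\chi_C(a)$ is vertical, so $dq\,\chi_C(a) = 0$; hence for any pulled-back section $q^* s'$ the pullback connection gives $\nabla_{\chi_C(a)}(q^* s') = q^*\big(\nabla'_{0} s'\big) = 0$. Since $q^* s'$ is $G$-invariant, also $\mathcal{L}_a^E(q^* s') = 0$, so $\langle \mu^\nabla, a\rangle$ annihilates every invariant section. As invariant sections span each fibre of $E$ (they may take any prescribed value at a point, the action being free) and $\langle \mu^\nabla, a\rangle$ is tensorial, I conclude $\langle \mu^\nabla, a\rangle = 0$ for all $a$, i.e.\ $\nabla$ restricts to the infinitesimal $\mathfrak{g}$-action.

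For the converse, I would assume $\mu^\nabla = 0$ and construct $\nabla'$ directly. Given $V' \in \Gamma(C/G, T(C/G))$ and $s' \in \Gamma(C/G, E')$, choose a $G$-invariant lift $\tilde V$ of $V'$ and declare $\nabla'_{V'} s'$ to be the section of $E'$ determined by $q^*(\nabla'_{V'} s') = \nabla_{\tilde V}(q^* s')$. Well-definedness is the crux and is exactly where the hypothesis enters. The output $\nabla_{\tilde V}(q^* s')$ is $G$-invariant because $\nabla$, $\tilde V$ and $q^* s'$ all are, so it does descend to a section of $E'$; and if $\tilde V_1, \tilde V_2$ are two invariant lifts then $W := \tilde V_1 - \tilde V_2$ is vertical, so at each point $W_x = \chi_C(a)(x)$ for some $a \in \mathfrak{g}$, whence by tensoriality in the vector argument and $\mu^\nabla = 0$, $\nabla_W(q^* s')(x) = \nabla_{\chi_C(a)}(q^* s')(x) = \mathcal{L}_a^E(q^* s')(x) = 0$. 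Thus $\nabla'$ is independent of the lift. The Leibniz rule for $\nabla'$ follows from that for $\nabla$ together with $\tilde V(q^* f') = q^*(V' f')$, and unitarity follows from unitarity of $\nabla$ and $G$-invariance of the metric. Finally $q^*\nabla' = \nabla$: their difference is an $\operatorname{End}(E)$-valued $1$-form, hence tensorial in the section, so it suffices to test on the spanning invariant sections $q^* s'$ and on vectors $v = \tilde V(x)$ with $\tilde V$ invariant (such fields span $T_x C$), where equality holds by the very definition of $\nabla'$. The only real obstacle is this independence-of-lift step; everything else is a routine transcription of the principal-bundle descent of bundles and metrics down to the level of connections.
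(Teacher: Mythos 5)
Your proof is correct and follows essentially the same route as the paper's: the forward direction tests $\mathcal{L}_a^E - \nabla_{\chi_C(a)}$ on pulled-back (equivalently, $G$-invariant) sections and uses a spanning/tensoriality argument, and the converse defines $\nabla'$ by descent and reduces well-definedness to the vanishing of $\nabla_{\chi_C(a)} - \mathcal{L}_a^E$ on vertical directions. The only cosmetic difference is that you phrase the lift-independence via $G$-invariant vector fields while the paper compares arbitrary pointwise lifts at two points of a fibre related by $g \in G$; both hinge on exactly the same hypothesis.
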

\begin{proof}
	Assume that $(E, \nabla)$ descends to $(E', \nabla')$. Fix $a \in \mathfrak{g}$. Observe that for any local smooth section $s'$ of $E'$, since $dq(\chi_C(a)) = 0$ and $q^*s'$ is $G$-invariant, $\nabla_{\chi_C(a)} (q^*s') = 0 = \mathcal{L}_a^E (q^*s')$. To show that $\nabla_{\chi_C(a)} = \mathcal{L}_a^E$, it remains to see that we can cover $C$ by open subsets over which there is a local frame of $E$ of the form $(q^*s'_1, ..., q^*s'_r)$, where $(s'_1, ..., s'_r)$ is a local frame of $E'$.\par
	Conversely, assume that $\nabla$ restricts to the infinitesimal $\mathfrak{g}$-action on $E$. It is clear that $E$ is isomorphic to $q^*E'$ as $G$-equivariant Hermitian vector bundles over $C$, where $E' = E / G$. Consider any $s' \in \Gamma(C/G, E')$, $y \in C/G$ and $v \in T_y(C/G)$. Define
	\begin{equation*}
		\nabla'_v s' := G \cdot (\nabla_u s) \in E'_y,
	\end{equation*}
	where $s = q^*s'$ and $u \in T_xC$ for some $x \in q^{-1}(y)$ such that $dq (u) = v$. We claim that this is well defined. Suppose $x' \in q^{-1}(y)$, $u' \in T_{x'}C$ and $dq(u') = v$. Pick $g \in G$ such that $x' = \rho_C(g)(x)$, where $\rho_C$ is the $G$-action on $C$. Then $dq(u' - d\rho_C(g)(u)) = 0$. There exists $a \in \mathfrak{g}$ such that $u' - d\rho_C(g)(u) = \chi_C(a)_{x'}$. As $s$ is $G$-invariant and $\nabla$ restricts to the infinitesimal $\mathfrak{g}$-action on $E$,
	\begin{equation*}
		g \cdot (\nabla_u s) = \nabla_{d\rho_C(g)(u)} s = \nabla_{u'}s + (\mathcal{L}_a^E s)(x') = \nabla_{u'}s.
	\end{equation*}
	Eventually, we can easily verify that $\nabla'$ is a unitary connection on $E'$.\par
\end{proof}

\subsection{Reduction of $G$-invariant coisotropic A-branes}
\label{Subsection 4.3}
\quad\par
Consider a $G$-invariant coisotropic A-brane $\mathcal{B} = (C, E, \nabla)$ on $(X, \omega)$ with moment section $\mu_\mathcal{B} \in \Gamma(C, \mathfrak{g}^* \otimes \operatorname{End}(E))$ and moment trace $\nu: C \to \mathfrak{g}^* \otimes \mathbb{C}$. We will see how to obtain a coisotropic A-brane on the symplectic reduction $(X \sslash G, \omega_{\operatorname{red}})$ from $\mathcal{B}$ based on an assumption on $\mu_\mathcal{B}$.

\begin{definition}
	$\mu_\mathcal{B}$ is said to be \emph{neat} if
	\begin{enumerate}
		\item $0$ is a clean value of $\nu$.
		\item $\nabla \vert_{\nu^{-1}(0)}$ restricts to the infinitesimal $\mathfrak{g}$-action on $E \vert_{\nu^{-1}(0)}$.
	\end{enumerate}
\end{definition}

\begin{remark}
	If $E$ is of rank $1$, then the second condition in the above definition is superfluous.
\end{remark}

As $G$ acts on $\mu^{-1}(0)$ freely and $C^0 := \nu^{-1}(0) \subset \mu^{-1}(0)$, $G$ also acts on $C^0$ freely. Under the first condition of the above definition, we can apply Theorem \ref{Theorem 4.2} to obtain the brane reduction $(C_{\operatorname{red}}, \Omega_{\operatorname{red}})$ of $(C, \Omega)$, where $\Omega = F_{\operatorname{tr}}^\nabla + \sqrt{-1} \omega \vert_C$; by Proposition \ref{Proposition 4.8}, the second condition implies that $(E^0, \nabla^0) := (E \vert_{C^0}, \nabla \vert_{C^0})$ descends to a Hermitian vector bundle $(E_{\operatorname{red}}, \nabla_{\operatorname{red}})$ over $C_{\operatorname{red}}$. Now, we state our main theorem.

\begin{theorem}[$=$ Theorem \ref{Theorem 1.2}]
	\label{Theorem 4.11}
	Let $\mathcal{B} = (C, E, \nabla)$ be a $G$-invariant A-brane on $(X, \omega)$. If the moment section $\mu_\mathcal{B}$ of $\mathcal{B}$ is neat, then $\mathcal{B}_{\operatorname{red}} = (C_{\operatorname{red}}, E_{\operatorname{red}}, \nabla_{\operatorname{red}})$ is an A-brane on $(X \sslash G, \omega_{\operatorname{red}})$.
\end{theorem}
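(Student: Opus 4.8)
The plan is to take the construction already in place---$C_{\operatorname{red}} = \nu^{-1}(0)/G$ with the transverse holomorphic symplectic form supplied by Theorem \ref{Theorem 4.2} (using the first neatness condition) and the descended bundle $(E_{\operatorname{red}}, \nabla_{\operatorname{red}})$ supplied by Proposition \ref{Proposition 4.8} (using the second)---and verify the three requirements of Definition \ref{Definition 2.4}: that $C_{\operatorname{red}}$ is coisotropic in $(X \sslash G, \omega_{\operatorname{red}})$, that $\Omega_{\operatorname{red}}^{\mathcal{B}} := F_{\operatorname{tr}}^{\nabla_{\operatorname{red}}} + \sqrt{-1}\, \omega_{\operatorname{red}}\vert_{C_{\operatorname{red}}}$ is a transverse holomorphic symplectic form, and that $F_0^{\nabla_{\operatorname{red}}}$ is of transverse type $(1,1)$. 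Throughout I write $C^0 = \nu^{-1}(0)$ and $q : C^0 \to C_{\operatorname{red}}$ for the quotient, and use that $(E^0, \nabla^0) := (E\vert_{C^0}, \nabla\vert_{C^0}) = q^*(E_{\operatorname{red}}, \nabla_{\operatorname{red}})$, so that curvatures pull back and the trace/traceless decomposition is preserved, giving $q^* F_0^{\nabla_{\operatorname{red}}} = (F_0^\nabla)\vert_{C^0}$ and $q^* F_{\operatorname{tr}}^{\nabla_{\operatorname{red}}} = (F_{\operatorname{tr}}^\nabla)\vert_{C^0}$.

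The condition on $\Omega_{\operatorname{red}}^{\mathcal{B}}$ comes almost for free. First I would pull back along the submersion $q$: combined with the defining relation $\pi^*\omega_{\operatorname{red}} = \omega\vert_{\mu^{-1}(0)}$ of the symplectic reduction, which restricts to $q^*(\omega_{\operatorname{red}}\vert_{C_{\operatorname{red}}}) = (\omega\vert_C)\vert_{C^0}$, I obtain $q^*\Omega_{\operatorname{red}}^{\mathcal{B}} = \Omega\vert_{C^0}$, which is exactly the form that Theorem \ref{Theorem 4.2} descends to $\Omega_{\operatorname{red}}$; since $q^*$ is injective on forms, $\Omega_{\operatorname{red}}^{\mathcal{B}} = \Omega_{\operatorname{red}}$ is transverse holomorphic symplectic. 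For coisotropy I would argue by a rank count. Because the imaginary part of a transverse holomorphic symplectic form is nondegenerate on the reduced tangent bundle (the linear algebra of Appendix \ref{Appendix A}), $\ker(\omega_{\operatorname{red}}\vert_{C_{\operatorname{red}}}) = \mathcal{F}_{\operatorname{red}}$, so $C_{\operatorname{red}}$ is coisotropic precisely when $\operatorname{rank}\Omega_{\operatorname{red}} = 2\dim C_{\operatorname{red}} - \dim(X \sslash G)$. The same reasoning applied to $\mathcal{B}$ gives $\operatorname{rank}\Omega = 2\dim C - \dim X$ from coisotropy of $C$, and feeding in the identities $\operatorname{rank}\Omega_{\operatorname{red}} = \operatorname{rank}\Omega - 4r$, $\dim C_{\operatorname{red}} = \dim C - 2r - \dim G$ of Corollary \ref{Corollary 4.3} together with $\dim(X \sslash G) = \dim X - 2\dim G$ makes the two rank equalities coincide.

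The real content is the transverse $(1,1)$ condition on $F_0^{\nabla_{\operatorname{red}}}$, and the key local identity to establish is
\[
F_0^\nabla(\chi_C(a), v) = 0 \quad \text{for all } a \in \mathfrak{g}, \ x \in C^0, \ v \in T_x C^0 .
\]
I would get this from two vanishings. The second neatness condition says $\nabla\vert_{C^0}$ restricts to the infinitesimal action, i.e. $\langle \mu^\nabla, a\rangle \equiv 0$ on $C^0$; since this section of $\operatorname{End}(E)$ vanishes along $C^0$ and $v$ is tangent to $C^0$, Proposition \ref{Proposition 3.2} gives $F^\nabla(\chi_C(a), v) = \nabla_v \langle \mu^\nabla, a\rangle = 0$. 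On the other hand the moment map relation $\Omega(\chi_C(a), v) = \langle d\nu(v), a\rangle$ of Definition \ref{Definition 3.1} together with $v \in \ker d\nu_x = T_x C^0$ forces $F_{\operatorname{tr}}^\nabla(\chi_C(a), v) = \operatorname{Re}\Omega(\chi_C(a), v) = 0$; as $F_0^\nabla = F^\nabla + \sqrt{-1}\, F_{\operatorname{tr}}^\nabla \operatorname{Id}_E$, the identity follows. Then, using that $F_0^\nabla$ is already of transverse type $(1,1)$ for $\mathcal{B}$ and that $T_xC^0/\mathcal{F}_x$ is a complex subspace of $T_{\operatorname{red},x}C$ (Lemma \ref{Lemma 4.5}), the $(1,1)$-invariance propagates the identity from $\chi_C(a)$ to $I\chi_C(a)$, so the descended form $\widetilde{F_0^\nabla}$ annihilates $\mathcal{F}^0/\mathcal{F} = \widehat{\mathfrak{g}}/\mathcal{F}$ (Lemma \ref{Lemma 4.6}). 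It therefore descends through the short exact sequence (\ref{Equation 4.5}) to a transverse type $(1,1)$ form on $(C^0, \Omega^0)$, which transports to $C_{\operatorname{red}}$ via the isomorphism $\widetilde{dq}$ of (\ref{Equation 4.7}) intertwining $I^0$ and $I_{\operatorname{red}}$.

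The hard part is this last verification. The genuinely delicate points are that the vanishing $F_0^\nabla(\chi_C(a), v) = 0$ on $C^0$ is exactly where the second neatness condition is indispensable, and that the descent of a transverse type $(1,1)$ form must be carried through the two-step complex subquotient $T_{\operatorname{red}}C^0 = (TC^0/\mathcal{F})/(\mathcal{F}^0/\mathcal{F})$ while keeping track of the type against $I^0$ and of the endomorphism coefficients $\operatorname{End}(E^0) = q^*\operatorname{End}(E_{\operatorname{red}})$. By contrast, the coisotropy of $C_{\operatorname{red}}$ and the condition on $\Omega_{\operatorname{red}}^{\mathcal{B}}$ are formal consequences of Theorem \ref{Theorem 4.2} and the dimension bookkeeping of Corollary \ref{Corollary 4.3}.
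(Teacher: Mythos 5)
Your proposal is correct, and its overall architecture coincides with the paper's: the identification $\Omega_{\operatorname{red}}^{\mathcal{B}} = \Omega_{\operatorname{red}}$ via pullback along $q$, and the transverse $(1,1)$ argument --- establishing $F_0^{\nabla}(\chi_C(a), v) = 0$ for $v \in T_xC^0$ from the second neatness condition, extending it to $\widehat{\mathfrak{g}}_{(x)} = \mathcal{F}^0_x$ using that $F_0^\nabla$ is already of transverse type $(1,1)$ and that $T_xC^0/\mathcal{F}_x$ is $I$-invariant, then descending through $T_{\operatorname{red}}C^0 \cong q^*T_{\operatorname{red}}C_{\operatorname{red}}$ --- are exactly the paper's steps (the paper packages your two vanishings, of $F^\nabla(\chi_C(a),\cdot)$ and of $F_{\operatorname{tr}}^\nabla(\chi_C(a),\cdot)$ on $TC^0$, into a single appeal to Proposition \ref{Proposition 3.2} applied to $(E^0,\nabla^0)$ with $\mu^{\nabla^0}=0$, whereas you compute the trace part via the moment map relation $\Omega(\chi_C(a),v)=\langle d\nu(v),a\rangle$; the content is the same).

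The one place you genuinely depart from the paper is the coisotropy of $C_{\operatorname{red}}$. The paper proves this pointwise via Lemma \ref{Lemma 4.12}, showing that both $(T_xC^0)^{\perp\omega^0}$ (computed in $T_x\mu^{-1}(0)$) and $(T_xC^0)^{\perp\Omega^0}$ equal $(\mathfrak{g}_{(x)}+T_xC^0)^{\perp\omega} = (\widehat{\mathfrak{g}}_{(x)}+T_xC^0)^{\perp\omega}$, which identifies the degeneracy distribution of $\omega_{\operatorname{red}}$ on $C_{\operatorname{red}}$ with $\mathcal{F}_{\operatorname{red}}$ directly and in particular exhibits it as a subbundle of $TC_{\operatorname{red}}$. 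You instead use that $\ker(\omega_{\operatorname{red}}\vert_{TC_{\operatorname{red}}}) = \mathcal{F}_{\operatorname{red}}$ (last bullet of Proposition \ref{Proposition A.1}) to reduce coisotropy to the numerical identity $\operatorname{rank}\Omega_{\operatorname{red}} = 2\dim C_{\operatorname{red}} - \dim(X\sslash G)$, and then verify it from Corollary \ref{Corollary 4.3} and $\dim(X\sslash G)=\dim X - 2\dim G$. The arithmetic checks out, and this bypasses Lemma \ref{Lemma 4.12} entirely, at the cost of being less explicit: the paper's computation also hands you the identity $(TC_{\operatorname{red}})^{\perp\Omega_{\operatorname{red}}} = (TC_{\operatorname{red}})^{\perp\omega_{\operatorname{red}}}$ of subspaces of $T(X\sslash G)$, which in your version is recovered only a posteriori from coisotropy. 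Either route is valid; yours leans harder on Corollary \ref{Corollary 4.3}, which the paper proves independently, so there is no circularity.
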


The following lemma does not require $0$ to be a clean value of $\nu$. In the sequel, we keep using notations in Subsection \ref{Subsection 4.1}.

\begin{lemma}
	\label{Lemma 4.12}
	Let $x \in C$. Then the following statements hold.
	\begin{enumerate}
		\item $(\ker d\nu_x)^{\perp \omega} = (\ker d\nu_x)^{\perp \Omega} \subset T_xC$.
		\item $(\ker d\mu_x) \cap (\ker d\nu_x)^{\perp \omega} = ( \mathfrak{g}_{(x)} + \ker d\nu_x )^{\perp \omega}$.
		\item $(\ker d\nu_x) \cap (\ker d\nu_x)^{\perp \Omega} = ( \widehat{\mathfrak{g}}_{(x)} + \ker d\nu_x)^{\perp \omega}$.
	\end{enumerate}
\end{lemma}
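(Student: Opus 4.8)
The plan is to push everything down to the transverse quotient $T_{\operatorname{red}, x}C = T_xC / \mathcal{F}_x$ and to isolate one linear-algebra lemma that does all the real work. First I would record the identities used throughout. Since $\Omega = F_{\operatorname{tr}}^\nabla + \sqrt{-1}\,\omega\vert_C$ has $\operatorname{Im}\Omega = \omega\vert_C$ and $\operatorname{Im}\nu = \mu\vert_C$, taking imaginary parts of the moment map identity $\Omega(\chi_C(a), u) = \langle d\nu(u), a\rangle$ yields $\omega(\chi_C(a), u) = \langle d\mu(u), a\rangle$ for all $a \in \mathfrak{g}$ and $u \in T_xC$; equivalently $\mathfrak{g}_{(x)}^{\perp\omega} = \ker d\mu_x$. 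With this in hand, part (2) is immediate: $(\mathfrak{g}_{(x)} + \ker d\nu_x)^{\perp\omega} = \mathfrak{g}_{(x)}^{\perp\omega} \cap (\ker d\nu_x)^{\perp\omega} = (\ker d\mu_x) \cap (\ker d\nu_x)^{\perp\omega}$, using only the distributive law $(A+B)^{\perp\omega} = A^{\perp\omega} \cap B^{\perp\omega}$.

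The heart of the matter, and the step I expect to cost the most care, is the following descent lemma: if $W \subseteq T_xC$ contains $\mathcal{F}_x$ and $W/\mathcal{F}_x$ is $I$-invariant, then $W^{\perp\omega} = W^{\perp\Omega}$ (perps taken inside $T_xC$). To prove it I would first observe that $\mathcal{F}_x$, being the radical of $\Omega$, lies in the radicals of both $\operatorname{Re}\Omega$ and $\omega$, so these forms descend to $\widetilde\omega, \operatorname{Re}\widetilde\Omega$ on $T_{\operatorname{red},x}C$, and for any such $W$ one has $W^{\perp\beta} = \pi^{-1}\big((W/\mathcal{F}_x)^{\perp\widetilde\beta}\big)$ with $\pi: T_xC \to T_{\operatorname{red},x}C$. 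This reduces the claim to $\overline W^{\perp\widetilde\omega} = \overline W^{\perp\widetilde\Omega}$ for the $I$-invariant subspace $\overline W = W/\mathcal{F}_x$. The inclusion $\overline W^{\perp\widetilde\Omega} \subseteq \overline W^{\perp\widetilde\omega}$ is clear since $\widetilde\omega = \operatorname{Im}\widetilde\Omega$; for the reverse, if $\widetilde\omega(v, \overline W) = 0$ then for $w \in \overline W$ the defining relation $\operatorname{Re}\widetilde\Omega(w, v) = \widetilde\omega(Iw, v)$ of Definition \ref{Definition 2.1} together with $Iw \in \overline W$ forces $\operatorname{Re}\widetilde\Omega(w, v) = 0$, whence $\widetilde\Omega(w,v) = 0$ and $v \in \overline W^{\perp\widetilde\Omega}$.

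Granting the lemma, parts (1) and (3) drop out. For (1), I would take $W = \ker d\nu_x$: it contains $\mathcal{F}_x$ and, because $\widetilde{d\nu}(Iu) = \sqrt{-1}\,\widetilde{d\nu}(u)$, its image $\ker \widetilde{d\nu}_x$ in $T_{\operatorname{red},x}C$ is $I$-invariant, so the lemma gives $(\ker d\nu_x)^{\perp\omega} = (\ker d\nu_x)^{\perp\Omega}$. For (3), I would take $W = \widehat{\mathfrak{g}}_{(x)} + \ker d\nu_x$; here $\widehat{\mathfrak{g}}_{(x)}/\mathcal{F}_x$ is $I$-invariant by its very definition (\ref{Equation 4.1}), since multiplication by $\sqrt{-1}$ in $\mathfrak{g}_\mathbb{C}$ corresponds to $I$, and $\ker d\nu_x/\mathcal{F}_x$ is $I$-invariant as above, so $W/\mathcal{F}_x$ is $I$-invariant and the lemma yields $(\widehat{\mathfrak{g}}_{(x)} + \ker d\nu_x)^{\perp\Omega} = (\widehat{\mathfrak{g}}_{(x)} + \ker d\nu_x)^{\perp\omega}$. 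Combining this with Lemma \ref{Lemma 4.5}(1), which gives $(\ker d\nu_x) \cap (\ker d\nu_x)^{\perp\Omega} = \mathcal{F}_x^0 = (\widehat{\mathfrak{g}}_{(x)} + \ker d\nu_x)^{\perp\Omega}$, completes (3). The only points demanding vigilance are the bookkeeping between perps in $T_xC$ and in $T_{\operatorname{red},x}C$ and checking that each $W$ fed into the lemma genuinely contains $\mathcal{F}_x$.
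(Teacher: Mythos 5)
Your core argument is essentially the paper's: the ``descent lemma'' you isolate is precisely the last bullet of Proposition \ref{Proposition A.1} (if $W$ contains the radical and $W_{\operatorname{red}}$ is $I$-invariant then $W^{\perp \Omega} = W^{\perp \omega}$), which the paper invokes directly and leaves unproved in Appendix \ref{Appendix A}. Your proof of that fact via $\operatorname{Re} \widetilde{\Omega}(w, v) = \operatorname{Im} \widetilde{\Omega}(Iw, v)$ is correct, and your applications to $W = \ker d\nu_x$ and $W = \widehat{\mathfrak{g}}_{(x)} + \ker d\nu_x$ (the latter combined with Lemma \ref{Lemma 4.5}) match the paper's steps for (1) and (3); part (2) is the same one-line computation in both.

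The one point you have not actually addressed is where the $\omega$-perps live. In the statement, and in its later use (in the proof of Theorem \ref{Theorem 1.2} the set $(\ker d\nu_x)^{\perp \omega}$ is intersected with $T_x\mu^{-1}(0)$, which is not contained in $T_xC$), $(\cdot)^{\perp \omega}$ is the orthogonal complement in $T_xX$, and the inclusion $(\ker d\nu_x)^{\perp \omega} \subset T_xC$ is part of assertion (1), not automatic. Your descent lemma only yields the equality of perps taken inside $T_xC$. The missing (short) step is: $\mathcal{F}_x \subset \ker d\nu_x$ by Lemma \ref{Lemma 4.5}, hence $(\ker d\nu_x)^{\perp \omega} \subset \mathcal{F}_x^{\perp \omega} = ((T_xC)^{\perp \omega})^{\perp \omega} = T_xC$ by coisotropy of $C$, after which the $T_xX$-perp and the $T_xC$-perp of $\ker d\nu_x$ coincide and your lemma applies; the same remark is needed for $W = \widehat{\mathfrak{g}}_{(x)} + \ker d\nu_x$ in (3). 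Relatedly, in (2) you derive $\mathfrak{g}_{(x)}^{\perp \omega} = \ker d\mu_x$ by taking imaginary parts of the transverse moment-map identity, which only holds for $u \in T_xC$ and so only identifies $\mathfrak{g}_{(x)}^{\perp \omega} \cap T_xC$ with $\ker d(\mu\vert_C)_x$; since $\ker d\mu_x$ is a subspace of $T_xX$, you should instead quote the ambient Hamiltonian condition $\iota_{\chi(a)}\omega = d\langle \mu, a \rangle$ on $X$, which gives the identity in $T_xX$ directly. With these bookkeeping adjustments the proposal is complete and coincides with the paper's argument.
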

\begin{proof}
	\begin{enumerate}
		\item As $C$ is a coisotropic submanifold of $X$, $(T_xC)^{\perp \omega} \subset T_xC$. Then $\mathcal{F}_x = (T_xC)^{\perp \omega}$. By Lemma \ref{Lemma 4.5}, $\mathcal{F}_x \subset \ker d\nu_x$ and $\ker d\nu_x / \mathcal{F}_x$ is a complex vector subspace of $(T_{\operatorname{quo}, x}C, I_x)$. Thus, $(\ker d\nu_x)^{\perp \omega} \subset \mathcal{F}_x^{\perp \omega} = T_xC$. By Proposition \ref{Proposition A.1},
		\begin{equation*}
			(\ker d\nu_x)^{\perp \omega} = T_xC \cap (\ker d\nu_x)^{\perp \omega} = (\ker d\nu_x)^{\perp \Omega}.
		\end{equation*}
		\item Since $\ker d\mu_x = \mathfrak{g}_{(x)}^{\perp \omega}$, $(\ker d\mu_x) \cap \left( \ker d\nu_x \right)^{\perp \omega} = ( \mathfrak{g}_{(x)} + \ker d\nu_x )^{\perp \omega}$.
		\item By Lemma \ref{Lemma 4.5}, $(\ker d\nu_x) \cap (\ker d\nu_x)^{\perp \Omega} = ( \widehat{\mathfrak{g}}_{(x)} + \ker d\nu_x)^{\perp \Omega}$. Now, we have
		\begin{equation*}
			( \widehat{\mathfrak{g}}_{(x)} + \ker d\nu_x)^{\perp \omega} \subset (\ker d\nu_x)^{\perp \omega} \subset T_xC.
		\end{equation*}
		Then by Proposition \ref{Proposition A.1}, $( \widehat{\mathfrak{g}}_{(x)} + \ker d\nu_x)^{\perp \Omega} = T_xC \cap ( \widehat{\mathfrak{g}}_{(x)} + \ker d\nu_x)^{\perp \omega} = ( \widehat{\mathfrak{g}}_{(x)} + \ker d\nu_x)^{\perp \omega}$.
	\end{enumerate}
\end{proof}

\begin{proof}[\myproof{Theorem}{\ref{Theorem 1.2}}]
	Note that $\Omega_{\operatorname{red}} = F_{\operatorname{tr}}^{\nabla_{\operatorname{red}}} + \sqrt{-1} \omega_{\operatorname{red}} \vert_{C_{\operatorname{red}}}$.  
	Now, we want to show that $C_{\operatorname{red}}$ is a coisotropic submanifold of $(X \sslash G, \omega_{\operatorname{red}})$, i.e. that the symplectic orthogonal complement $(TC_{\operatorname{red}})^{\perp \omega_{\operatorname{red}}}$ in $T(X \sslash G) \vert_C$ is a vector subbundle of $TC_{\operatorname{red}}$. It suffices to prove the claim that
	\begin{equation*}
		(TC_{\operatorname{red}})^{\perp \omega_{\operatorname{red}}} = (TC_{\operatorname{red}})^{\perp \Omega_{\operatorname{red}}},
	\end{equation*}
	because, by Theorem \ref{Theorem 4.2}, $\Omega_{\operatorname{red}}$ is a transverse holomorphic symplectic form on $C_{\operatorname{red}}$ and hence $(TC_{\operatorname{red}})^{\perp \Omega_{\operatorname{red}}}$ is a vector subbundle of $TC_{\operatorname{red}}$.\par
	Fix $x \in C^0$. By Lemma \ref{Lemma 4.12},
	\begin{itemize}
		\item $(T_xC^0)^{\perp \omega^0} = T_x\mu^{-1}(0) \cap \left( T_xC^0 \right)^{\perp \omega} = (\mathfrak{g}_{(x)} + T_xC^0)^{\perp \omega}$, where $\omega^0 = \omega \vert_{\mu^{-1}(0)}$.
		\item $(T_xC^0)^{\perp \Omega^0} = T_xC^0 \cap (T_xC^0)^{\perp \Omega} = (\widehat{\mathfrak{g}}_{(x)} + T_xC^0)^{\perp \omega}$.
	\end{itemize}
	As $\mathfrak{g}_{(x)}$ and $\widehat{\mathfrak{g}}_{(x)}$ are vector subspaces of $T_xC^0$, $(T_xC^0)^{\perp \omega^0} = (T_xC^0)^{\perp \Omega^0}$. Let $q: C^0 \to C_{\operatorname{red}}$ be the quotient map. The short exact sequences
	\begin{align*}
		\mathfrak{g}_{(x)} \longrightarrow (T_xC^0)^{\perp \omega^0} \overset{dq_x}{\longrightarrow} (T_{q(x)} C_{\operatorname{red}})^{\perp \omega_{\operatorname{red}}}\\
		\mathfrak{g}_{(x)} \longrightarrow (T_xC^0)^{\perp \Omega^0} \overset{dq_x}{\longrightarrow} (T_{q(x)} C_{\operatorname{red}})^{\perp \Omega_{\operatorname{red}}}
	\end{align*}
	for all $x \in C^0$ imply that our claim holds.\par
	It remains to show that $F_0^{\nabla_{\operatorname{red}}}$ is of transverse type $(1, 1)$. Fix $x \in C^0$. Note that $(E^0, \nabla^0)$ is still $G$-equivariant. Since $\nabla^0$ restricts to the infinitesimal $\mathfrak{g}$-action on $E^0$, by Proposition \ref{Proposition 3.2},
	\begin{equation}
		\label{Equation 4.8}
		F_0^{\nabla^0}(u, v) = 0,
	\end{equation}
	for all $u \in \mathfrak{g}_{(x)} + \mathcal{F}_x$ and $v \in T_xC^0$. Since $F_0^\nabla$ is of transverse type $(1, 1)$, (\ref{Equation 4.8}) holds even when $u \in \widehat{\mathfrak{g}}_{(x)} = \mathcal{F}_x^0$. By simple linear algebra, we can show that $\widetilde{F}_0^{\nabla^0}(I^0u, I^0v) = \widetilde{F}_0^{\nabla^0}(u, v)$ for all $u, v \in T_{\operatorname{quo}, x}C^0$. It implies that $F_0^{\nabla_{\operatorname{red}}}$ is of transverse type $(1, 1)$. We are done.
\end{proof}

\section{A-model morphism spaces and sheaf cohomologies on intersections of A-branes}
\label{Section 5}
Let $\mathcal{B} = (C, E, \nabla)$, $\mathcal{B}' = (C', E', \nabla')$ be A-branes on $(X, \omega)$. Throughout this section, we assume that $S := C \cap C'$ is a clean intersection. Consider the following Hermitian vector bundle over $S$:
\begin{equation}
	\underline{\operatorname{Hom}}(\mathcal{B}, \mathcal{B}') := \underline{\operatorname{Hom}}(E \vert_S, E' \vert_S),
\end{equation}
with the induced unitary connection $\nabla^{\underline{\operatorname{Hom}}(E \vert_S, E' \vert_S)}$. When $S$ admits a transverse holomorphic structure $\mathcal{E}_S$ such that $\nabla^{\underline{\operatorname{Hom}}(E \vert_S, E' \vert_S)}$ restricts to a flat $\mathcal{E}_S$-connection, we can take the sheaf cohomology of transversely holomorphic sections of $\underline{\operatorname{Hom}}(\mathcal{B}, \mathcal{B}')$:
\begin{equation}
	\label{Equation 5.2}
	H_{\mathcal{E}_S}^*(S, \underline{\operatorname{Hom}}(\mathcal{B}, \mathcal{B}')).
\end{equation}
We will see from Subsection \ref{Subsection 5.1} that, for suitably chosen $\mathcal{E}_S$, (\ref{Equation 5.2}) plays an important role in mathematically formulating the A-model morphism space $\operatorname{Hom}_X(\mathcal{B}, \mathcal{B}')$ from $\mathcal{B}$ to $\mathcal{B}'$, at least in the classical limit.\par
Indeed, a choice of $\mathcal{E}_S$ should be constrained by the intersection of the associated transverse holomorphic structures $\mathcal{E}$ on $C$ and $\mathcal{E}'$ on $C'$. Due to this issue, we will introduce the notion of \emph{adaptedness} of a transverse holomorphic structure on $S$ in Subsection \ref{Subsection 5.2}.

\subsection{A case study on A-model morphism spaces}
\label{Subsection 5.1}
\quad\par
This subsection is a brief review on the physical background behind mathematical developments of morphisms spaces among coisotropic A-branes.\par
When $\mathcal{B}, \mathcal{B}'$ are Lagrangian, $\operatorname{Hom}_X(\mathcal{B}, \mathcal{B}')$ is mathematically formulated as the Floer cohomology for $(\mathcal{B}, \mathcal{B}')$ \cite{Fuk1993}. As $\mathcal{E} = TC \otimes \mathbb{C}$ and $\mathcal{E}' = TC' \otimes \mathbb{C}$, $\mathcal{E} \cap \mathcal{E}' = TS \otimes \mathbb{C}$ is a transverse holomorphic structure on $S$. Then $H_{\mathcal{E} \cap \mathcal{E}'}^*(S, \underline{\operatorname{Hom}}(\mathcal{B}, \mathcal{B}'))$, which is the sheaf cohomology of flat sections of $\underline{\operatorname{Hom}}(\mathcal{B}, \mathcal{B}')$, can be viewed as a classical limit of the above Floer cohomology (see e.g. \cite{FukOhOhtOno2009}).\par
Next, consider two identical rank-$1$ A-branes $\mathcal{B}, \mathcal{B}'$ on $(X, \omega)$, i.e. $\mathcal{B} = \mathcal{B}'$. Then $\mathcal{E} \cap \mathcal{E}' = \mathcal{E}$ is surely a transverse holomorphic structure on $S = C$ and $\underline{\operatorname{Hom}}(\mathcal{B}, \mathcal{B})$ is trivial. The classical limit of $\operatorname{Hom}_X(\mathcal{B}, \mathcal{B})$ is regarded as the BRST cohomology of open strings with both ends on $\mathcal{B}$ \cite{KapLi2005, KapOrl2003}, and coincides with $H_{\mathcal{E}}^*(S, \underline{\operatorname{Hom}}(\mathcal{B}, \mathcal{B}))$.\par
Finally, consider the situation when $\mathcal{B} = (M, M \times \mathbb{C}, d)$ for some Lagrangian submanifold $M$ of $(X, \omega)$ and $\mathcal{B}' = \mathcal{B}_{\operatorname{cc}} = (X, L, \nabla)$ is a rank-$1$ space-filling A-brane. Denote by $\Omega$ the holomorphic symplectic form on $X$ associated with $\mathcal{B}_{\operatorname{cc}}$. Note that $S = M$, $\mathcal{E} = TM \otimes \mathbb{C}$, $\mathcal{E}' = T^{0, 1}X$ and $\underline{\operatorname{Hom}}(\mathcal{B}, \mathcal{B}_{\operatorname{cc}}) = (L \vert_M, \nabla \vert_M)$. There are two extreme cases for $\mathcal{E} \cap \mathcal{E}' = (TM \otimes \mathbb{C}) \cap T^{0, 1}X$.\par
\begin{itemize}
	\item Suppose that $\mathcal{E} \cap \mathcal{E}'$ defines a complex structure on $M$, or equivalently, that $M$ is a complex Lagrangian submanifold of $(X, \Omega)$. In this case, $\nabla \vert_M$ is flat, giving $L \vert_M$ a holomorphic structure. Physical arguments \cite{GaiWit2022, GukWit2009, KapWit2007} show that we should regard $\operatorname{Hom}_X(\mathcal{B}, \mathcal{B}_{\operatorname{cc}})$ as $H_{\overline{\partial}}^{0, *}(M, L \vert_M \otimes \sqrt{K_M})$, assuming that $M$ has a square root $\sqrt{K_M}$ of its canonical bundle. It only differ from $H_{\mathcal{E}_S}^*(S, \underline{\operatorname{Hom}}(\mathcal{B}, \mathcal{B}_{\operatorname{cc}}))$ by the factor $\sqrt{K_M}$ for $\mathcal{E}_S = \mathcal{E} \cap \mathcal{E}' = T^{0, 1}M$.
	\item Suppose that $\mathcal{E} \cap \mathcal{E}'$ is a zero vector bundle, or equivalently, that $M$ is a symplectic submanifold of $(X, \operatorname{Re} \Omega)$. It is not known how to formulate $\operatorname{Hom}_X(\mathcal{B}, \mathcal{B}_{\operatorname{cc}})$ mathematically in general. However, physical arguments \cite{GaiWit2022, GukWit2009, KapWit2007} show that, under certain assumptions which imply that $(M, \operatorname{Re} \Omega \vert_M)$ is compact K\"ahler (and $\sqrt{K_M}$ exists), $\operatorname{Hom}_X(\mathcal{B}, \mathcal{B}_{\operatorname{cc}})$ should be regarded as $H_{\overline{\partial}}^{0, *}(M, L \vert_M \otimes \sqrt{K_M})$. Again, it only differs from $H_{\mathcal{E}_S}^*(S, \underline{\operatorname{Hom}}(\mathcal{B}, \mathcal{B}_{\operatorname{cc}}))$ by the factor $\sqrt{K_M}$, if we take $\mathcal{E}_S$ to be the $\operatorname{Re} \Omega \vert_M$-compatible complex structure on $S = M$. The key difference from the previous case is that $\mathcal{E}_S$ in this case does not come from the intersection $\mathcal{E} \cap \mathcal{E}'$, but extra assumptions on $M$ and $X$.
\end{itemize}
All the above cases show that A-model morphism spaces are closely related to the behaviour of the intersection $\mathcal{E} \cap \mathcal{E}'$ -- whether $\mathcal{E} \cap \mathcal{E}'$ defines a transverse holomorphic structure on $S$ or not. This issue is the motivation of the next subsection.

\subsection{Adapted transverse holomorphic structures on intersections of A-branes}
\label{Subsection 5.2}
\quad\par
We first state basic properties of $\mathcal{E} \cap \mathcal{E}'$ (and $\mathcal{F} \cap \mathcal{F}'$, where $\mathcal{F} = (TC)^{\perp \omega}$ and $\mathcal{F}' = (TC')^{\perp \omega}$). Recall that the \emph{excess bundle} of $S$ is the vector bundle
\begin{equation*}
	\frac{TX \vert_S}{TC \vert_S + TC' \vert_S}.
\end{equation*}
Its rank, denoted by $e(C, C')$, is given by $e(C, C') = \dim X + \dim S - \dim C - \dim C'$.

\begin{proposition}
	\label{Proposition 5.1}
	$\mathcal{F} \cap \mathcal{F}'$ is an involutive rank-$e(C, C')$ vector subbundle of $TS$, and for all $x \in S$,
	\begin{enumerate}
		\item the complex dimension of $\mathcal{E}_x \cap \mathcal{E}'_x$ is at most $\tfrac{1}{2}(\dim S + e(C, C'))$; and
		\item for all $u, v \in \mathcal{E}_x \cap \mathcal{E}'_x$, $F^{\nabla^{\underline{\operatorname{Hom}}(E \vert_S, E' \vert_S)}}(u, v) = 0$.
	\end{enumerate}
\end{proposition}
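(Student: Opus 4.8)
The plan is to treat the three assertions separately, since each rests on a different part of the structure. Throughout I would use the non-degeneracy identity $(T_xC)^{\perp\omega} \cap (T_xC')^{\perp\omega} = (T_xC + T_xC')^{\perp\omega}$ together with the clean intersection hypothesis $T_xS = T_xC \cap T_xC'$.

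First I would handle $\mathcal{F} \cap \mathcal{F}'$. Pointwise $\mathcal{F}_x \cap \mathcal{F}'_x = (T_xC + T_xC')^{\perp\omega}$, so by non-degeneracy its dimension is $\dim X - \dim(T_xC + T_xC') = \dim X + \dim S - \dim C - \dim C' = e(C,C')$; cleanness makes $T_xC + T_xC'$ a constant-rank subbundle of $TX\vert_S$, whence $\mathcal{F} \cap \mathcal{F}'$ is a smooth subbundle of constant rank $e(C,C')$. Moreover $\mathcal{F}_x \subset T_xC$ and $\mathcal{F}'_x \subset T_xC'$ force $\mathcal{F}_x \cap \mathcal{F}'_x \subset T_xC \cap T_xC' = T_xS$. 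For involutivity I would use a restriction-of-brackets argument based on the fact that $\mathcal{F}$ is involutive on $C$ and $\mathcal{F}'$ is involutive on $C'$ (Remark \ref{Remark 2.2}). Given local sections $U, V$ of $\mathcal{F} \cap \mathcal{F}'$ over $S$, extend them to local sections $\widetilde{U}, \widetilde{V}$ of $\mathcal{F}$ on $C$; since $\widetilde{U}\vert_S = U$ and $\widetilde{V}\vert_S = V$ take values in $TS$, the fields $\widetilde{U}, \widetilde{V}$ are tangent to $S$ along $S$, so $[\widetilde{U}, \widetilde{V}]\vert_S = [U, V]$, and involutivity of $\mathcal{F}$ gives $[U,V] \in \mathcal{F}\vert_S$. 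Repeating the extension inside $C'$ with $\mathcal{F}'$ yields $[U,V] \in \mathcal{F}'\vert_S$, hence $[U,V]$ is a section of $\mathcal{F} \cap \mathcal{F}'$.

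For statement (1) the key is a conjugation count. As $\mathcal{E} \cap \overline{\mathcal{E}}$ is conjugation-invariant with real part $(TC)^{\perp\omega} = \mathcal{F}$, it equals $\mathcal{F} \otimes \mathbb{C}$ (Remark \ref{Remark 2.2}), and likewise $\mathcal{E}' \cap \overline{\mathcal{E}}' = \mathcal{F}' \otimes \mathbb{C}$. Therefore
\begin{equation*}
	(\mathcal{E}_x \cap \mathcal{E}'_x) \cap \overline{(\mathcal{E}_x \cap \mathcal{E}'_x)} = (\mathcal{E}_x \cap \overline{\mathcal{E}}_x) \cap (\mathcal{E}'_x \cap \overline{\mathcal{E}}'_x) = (\mathcal{F}_x \cap \mathcal{F}'_x) \otimes \mathbb{C},
\end{equation*}
which has complex dimension $e(C,C')$ by the previous step. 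Writing $W = \mathcal{E}_x \cap \mathcal{E}'_x \subset T_xS \otimes \mathbb{C}$, the inequality $\dim_\mathbb{C}(W + \overline{W}) \leq \dim_\mathbb{C}(T_xS \otimes \mathbb{C}) = \dim S$ together with $\dim_\mathbb{C}\overline{W} = \dim_\mathbb{C} W$ yields $2\dim_\mathbb{C} W - \dim_\mathbb{C}(W \cap \overline{W}) \leq \dim S$, that is $\dim_\mathbb{C} W \leq \tfrac{1}{2}(\dim S + e(C,C'))$, which is (1).

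Finally, statement (2) follows directly from Remark \ref{Remark 2.5} applied to both branes. For $u, v \in \mathcal{E}_x \cap \mathcal{E}'_x$, vanishing of $F^\nabla + (\omega\vert_C)\operatorname{Id}_E$ on $\mathcal{E}$ gives $F^\nabla(u,v) = -\omega(u,v)\operatorname{Id}_E$, and likewise $F^{\nabla'}(u,v) = -\omega(u,v)\operatorname{Id}_{E'}$. Since the curvature of $\underline{\operatorname{Hom}}(E\vert_S, E'\vert_S)$ acts on $\xi$ by $F^{\nabla'}(u,v)\circ \xi - \xi \circ F^\nabla(u,v)$, the two scalar terms cancel, giving $F^{\nabla^{\underline{\operatorname{Hom}}(E\vert_S,E'\vert_S)}}(u,v) = 0$. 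I expect the involutivity of $\mathcal{F} \cap \mathcal{F}'$ to be the only genuinely delicate point, precisely because the intersection of two involutive distributions is in general not involutive; the argument succeeds here only because both distributions already land in $TS$ along $S$, which is exactly what makes the ambient brackets restrict correctly to the intrinsic bracket on $S$.
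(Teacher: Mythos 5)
Your proposal is correct and follows essentially the same route as the paper: the identity $\mathcal{F}_x \cap \mathcal{F}'_x = (T_xC + T_xC')^{\perp\omega}$ for the rank count, the conjugation-invariance of $\mathcal{E}_x \cap \mathcal{E}'_x \cap \overline{\mathcal{E}_x \cap \mathcal{E}'_x}$ for the dimension bound, and the cancellation of the scalar terms $\omega(u,v)\operatorname{Id}$ via Remark \ref{Remark 2.5} for the curvature of the Hom bundle. Your extension-and-restriction argument for involutivity of $\mathcal{F} \cap \mathcal{F}'$ is a welcome elaboration of a step the paper asserts in one line, and you are right that it is the one point where something genuinely needs to be checked, since intersections of involutive distributions are not involutive in general.
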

\begin{proof}
	Observe that $\mathcal{F}_x \cap \mathcal{F}'_x = (T_xC)^{\perp \omega} \cap (T_xC')^{\perp \omega} = (T_xC + T_xC')^{\perp \omega}$. We have
	\begin{equation*}
		\dim \mathcal{F}_x \cap \mathcal{F}'_x = \dim X - \dim (T_xC + T_xC') = e(C, C').
	\end{equation*}
	Thus, $\mathcal{F} \cap \mathcal{F}'$ is a vector subbundle of $TS$. As $\mathcal{F}, \mathcal{F}'$ are involutive, so is $\mathcal{F} \cap \mathcal{F}'$.\par
	As $\mathcal{E}_x \cap \mathcal{E}'_x \cap \overline{\mathcal{E}_x \cap \mathcal{E}'_x} = (\mathcal{F}_x \cap \mathcal{F}'_x) \otimes \mathbb{C} \subset T_xS \otimes \mathbb{C}$, the complex dimension of $\mathcal{E}_x \cap \mathcal{E}'_x$ is at most
	\begin{equation*}
		\dim (\mathcal{F}_x \cap \mathcal{F}'_x) + \tfrac{1}{2}(\dim S - \dim (\mathcal{F}_x \cap \mathcal{F}'_x)) = \tfrac{1}{2} (\dim S + e(C, C')).
	\end{equation*}
	Fix $u, v \in \mathcal{E}_x \cap \mathcal{E}'_x$. Then for any $A \in \operatorname{Hom}(E_x, E'_x)$,
	\begin{align*}
		F^{\nabla^{\underline{\operatorname{Hom}}(E \vert_S, E' \vert_S)}}(u, v)(A) = & F^{\nabla'}(u, v) \circ A - A \circ F^\nabla(u, v)\\
		= & (F^{\nabla'}(u, v) + \omega(u, v) \operatorname{Id}_{E'_x}) \circ A - A \circ (F^\nabla(u, v) + \omega(u, v) \operatorname{Id}_{E_x}).
	\end{align*}
	The last line vanishes due to Remark \ref{Remark 2.5} for the two A-branes $\mathcal{B}$ and $\mathcal{B}'$.
\end{proof}

Motivated by the above proposition, we are interested in the following choice of $\mathcal{E}_S$.

\begin{definition}
	A transverse holomorphic structure $\mathcal{E}_S$ on $S$ is said to be \emph{adapted} if \begin{enumerate}
		\item $\mathcal{E} \cap \mathcal{E}' \subset \mathcal{E}_S$ and $\mathcal{F} \cap \mathcal{F}' = \mathcal{E}_S \cap \overline{\mathcal{E}_S} \cap TS$; and
		\item the curvature $F^{\nabla^{\underline{\operatorname{Hom}}(E \vert_S, E' \vert_S)}}$ vanishes on $\mathcal{E}_S$.
	\end{enumerate}
\end{definition}

\begin{remark}
	\label{Remark 5.3}
	If $\mathcal{E} \cap \mathcal{E}'$ itself is a transverse holomorphic structure on $S$, then it is the unique adapted transverse holomorphic structure on $S$. In particular, the intersection of two identical A-branes always has a unique adapted transverse holomorphic structure.
\end{remark}

For an adapted transverse holomorphic structure $\mathcal{E}_S$ on $S$, by definition, $\nabla^{\underline{\operatorname{Hom}}(E \vert_S, E' \vert_S)}$ restricts to a flat $\mathcal{E}_S$-connection $d_{\mathcal{E}_S}^{\underline{\operatorname{Hom}}(E \vert_S, E' \vert_S)}$ so that we can obtain $H_{\mathcal{E}_S}^*(S, \underline{\operatorname{Hom}}(\mathcal{B}, \mathcal{B}'))$, which is isomorphic to the cohomology of the Chevalley-Eilenberg complex
\begin{equation*}
	\left( \Gamma\left(S, \bigwedge \mathcal{E}_S^* \otimes \underline{\operatorname{Hom}}(E \vert_S, E' \vert_S) \right), d_{\mathcal{E}_S}^{\underline{\operatorname{Hom}}(E \vert_S, E' \vert_S)} \right).
\end{equation*}
When $S$ is compact, $H_{\mathcal{E}_S}^*(S, \underline{\operatorname{Hom}}(\mathcal{B}, \mathcal{B}'))$ is finite dimensional. We suspect that $H_{\mathcal{E}_S}^*(S, \underline{\operatorname{Hom}}(\mathcal{B}, \mathcal{B}'))$ should be a mathematical realization of (the classical limit of) $\operatorname{Hom}_X(\mathcal{B}, \mathcal{B}')$, up to a correction by some $\operatorname{Spin}^{\operatorname{c}}$-structures on the supports of $\mathcal{B}$ and $\mathcal{B}'$.

\section{Intersections of A-branes commute with brane reduction}
\label{Section 6}
\quad\par
Let $\mathcal{B}, \mathcal{B}'$ be $G$-invariant A-branes on the Hamiltonian $G$-manifold $(X, \omega)$ with neat moment sections so that we can obtain their brane reductions $\mathcal{B}_{\operatorname{red}}, \mathcal{B}'_{\operatorname{red}}$. Again, assume that $S := C \cap C'$ is a clean intersection, where $C, C'$ are the supports of $\mathcal{B}, \mathcal{B}'$ respectively. We will examine the relationship between $S$ and $S_{\operatorname{red}} := C_{\operatorname{red}} \cap (C')_{\operatorname{red}}$ (resp. $\underline{\operatorname{Hom}}(\mathcal{B}, \mathcal{B}')$ and $\underline{\operatorname{Hom}}(\mathcal{B}_{\operatorname{red}}, \mathcal{B}'_{\operatorname{red}})$). Let $\nu: C \to \mathfrak{g}^* \otimes \mathbb{C}$, $\nu': C' \to \mathfrak{g}^* \otimes \mathbb{C}$ be the moment traces of $\mathcal{B}$ and $\mathcal{B}'$ respectively (see Proposition \ref{Proposition 3.6}). Recall that $\mu: X \to \mathfrak{g}^*$ denotes the given moment map of $(X, \omega)$. Define a $G$-equivariant smooth map
\begin{equation}
	\label{Equation 6.1}
	\eta := (\operatorname{Re} \nu \vert_S, \operatorname{Re} \nu' \vert_S, \mu \vert_S): S \to \mathfrak{g}^* \otimes \mathbb{R}^3.
\end{equation}
We can clearly see that $S^0 := \eta^{-1}(0) = \nu^{-1}(0) \cap (\nu')^{-1}(0)$.

\begin{proposition}
	\label{Proposition 6.1}
	The following conditions are equivalent.
	\begin{enumerate}
		\item \label{Proposition 6.1 (1)}
		$0 \in \mathfrak{g}^* \otimes \mathbb{R}^3$ is a clean value of $\eta$.
		\item \label{Proposition 6.1 (2)}
		$\nu^{-1}(0)$ and $(\nu')^{-1}(0)$ intersect cleanly in $\mu^{-1}(0)$.
		\item \label{Proposition 6.1 (3)}
		$C_{\operatorname{red}}$ and $C'_{\operatorname{red}}$ intersect cleanly in $X \sslash G$.
	\end{enumerate}
	If any one of the above equivalent conditions holds, then $S^0$ is a principal $G$-bundle over $S_{\operatorname{red}}$ and the Hermitian vector bundle $\underline{\operatorname{Hom}}(\mathcal{B}, \mathcal{B}') \vert_{S^0}$ over $S^0$ descends to the Hermitian vector bundle $\underline{\operatorname{Hom}}(\mathcal{B}_{\operatorname{red}}, \mathcal{B}'_{\operatorname{red}})$ over $S_{\operatorname{red}}$.
\end{proposition}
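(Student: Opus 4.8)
The plan is to reduce the three-way equivalence to a single pointwise linear-algebra identity and then propagate it through the principal bundle structure of $q \colon \mu^{-1}(0) \to X \sslash G$. Write $C^0 = \nu^{-1}(0)$ and $(C')^0 = (\nu')^{-1}(0)$, so that $C_{\operatorname{red}} = C^0/G$, $(C')_{\operatorname{red}} = (C')^0/G$ and $S^0 = C^0 \cap (C')^0$. The first step is to establish, for every $x \in S^0$, the identity
\begin{equation*}
	\ker d\eta_x = T_x C^0 \cap T_x (C')^0.
\end{equation*}
Since $\mu_{\mathcal{B}}$ and $\mu_{\mathcal{B}'}$ are neat, $0$ is a clean value of both $\nu$ and $\nu'$, whence $T_x C^0 = \ker d\nu_x$ and $T_x (C')^0 = \ker d\nu'_x$. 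Using cleanness of $S$ to write $T_x C \cap T_x C' = T_x S$, and recalling $\operatorname{Im} \nu = \mu \vert_C$ and $\operatorname{Im} \nu' = \mu \vert_{C'}$, I would split $d\nu_x$ and $d\nu'_x$ into real and imaginary parts: for $u \in T_x S$ one has $d\nu_x(u) = 0 \Leftrightarrow d(\operatorname{Re} \nu)_x(u) = 0$ and $d\mu_x(u) = 0$, and similarly for $\nu'$. Reading off the three vanishing conditions gives exactly $\ker d\eta_x$. This identity yields (\ref{Proposition 6.1 (1)}) $\Leftrightarrow$ (\ref{Proposition 6.1 (2)}) immediately, since both assertions say that $S^0$ is a submanifold whose tangent space at each point is the common value $\ker d\eta_x = T_x C^0 \cap T_x (C')^0$.

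For (\ref{Proposition 6.1 (2)}) $\Leftrightarrow$ (\ref{Proposition 6.1 (3)}), I would use that $G$ acts freely on $\mu^{-1}(0)$, so $q$ is a principal $G$-bundle and in particular a surjective submersion. The submanifolds $C^0, (C')^0, S^0$ are $G$-invariant, hence saturated, so $C^0 = q^{-1}(C_{\operatorname{red}})$, $(C')^0 = q^{-1}((C')_{\operatorname{red}})$ and $S^0 = q^{-1}(S_{\operatorname{red}})$; moreover $\ker dq_x = \mathfrak{g}_{(x)}$ is contained in each of the three tangent spaces. Applying $dq_x^{-1}$ to intersections and invoking surjectivity of $dq_x$ then transfers the tangent-space condition $T_x S^0 = T_x C^0 \cap T_x (C')^0$ upstairs to $T_{q(x)} S_{\operatorname{red}} = T_{q(x)} C_{\operatorname{red}} \cap T_{q(x)} (C')_{\operatorname{red}}$ downstairs and back, while the principal bundle structure matches the submanifold conditions. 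This closes the chain of equivalences.

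Finally, assuming one (hence all) of the conditions, $S^0$ is a $G$-invariant submanifold of $\mu^{-1}(0)$ on which $G$ acts freely, so $S^0 \to S^0/G = S_{\operatorname{red}}$ is a principal $G$-bundle. For the Hermitian bundle, neatness gives that $\nabla$ and $\nabla'$ restrict to the infinitesimal $\mathfrak{g}$-actions on $E$ and $E'$ over $\nu^{-1}(0)$ and $(\nu')^{-1}(0)$ respectively; since $S^0$ lies in both, the induced unitary connection on $\underline{\operatorname{Hom}}(E \vert_{S^0}, E' \vert_{S^0})$ satisfies $\nabla^{\underline{\operatorname{Hom}}}_{\chi(a)} = \mathcal{L}_a^{\underline{\operatorname{Hom}}}$ as well. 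By Proposition \ref{Proposition 4.8} it therefore descends to a Hermitian vector bundle over $S_{\operatorname{red}}$, and by uniqueness of descent this bundle is the restriction of $\underline{\operatorname{Hom}}(E_{\operatorname{red}}, E'_{\operatorname{red}})$ to $S_{\operatorname{red}}$, i.e. $\underline{\operatorname{Hom}}(\mathcal{B}_{\operatorname{red}}, \mathcal{B}'_{\operatorname{red}})$.

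The conceptual heart is the identity $\ker d\eta_x = T_x C^0 \cap T_x (C')^0$, which simultaneously packages all the neatness and cleanness hypotheses; but the step requiring the most care is (\ref{Proposition 6.1 (2)}) $\Leftrightarrow$ (\ref{Proposition 6.1 (3)}), since \emph{clean} intersections (unlike transverse ones) impose both a manifold condition and a tangent condition, and one must check that taking $q$-preimages commutes with intersection and that the submersion faithfully reflects cleanness — which is precisely where the containment $\mathfrak{g}_{(x)} \subset T_x C^0 \cap T_x (C')^0$ is essential.
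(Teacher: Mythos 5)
Your proposal is correct and follows essentially the same route as the paper's proof: the pointwise identity $\ker d\eta_x = T_xC^0 \cap T_x(C')^0$ (obtained from cleanness of $S$ and the splitting of $\nu, \nu'$ into real parts and $\mu$) gives (\ref{Proposition 6.1 (1)}) $\Leftrightarrow$ (\ref{Proposition 6.1 (2)}), and the principal $G$-bundle $q: \mu^{-1}(0) \to X \sslash G$ with $\eta^{-1}(0) = q^{-1}(S_{\operatorname{red}})$ gives (\ref{Proposition 6.1 (2)}) $\Leftrightarrow$ (\ref{Proposition 6.1 (3)}). Your treatment of the descent of $\underline{\operatorname{Hom}}(\mathcal{B}, \mathcal{B}')\vert_{S^0}$ via Proposition \ref{Proposition 4.8} is a correct and slightly more explicit account of what the paper dismisses as evident.
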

\begin{proof}
	Let $C^0 = \nu^{-1}(0)$ and $(C')^0 = (\nu')^{-1}(0)$. Fix $x \in \eta^{-1}(0)$. Observe that
	\begin{align*}
		T_xC^0 = \{ u \in T_xC: d\nu(u) = 0 \} \quad \text{and} \quad T_x(C')^0 = \{ u \in T_xC': d\nu'(u) = 0 \}.
	\end{align*}
	By the hypothesis that $T_xS = T_xC \cap T_xC'$,
	\begin{equation*}
		T_xC^0 \cap T_x(C')^0 = \{ u \in T_xS: d\nu(u) = d\nu'(u) = 0 \} = \ker d\eta_x.
	\end{equation*}
	It implies that Conditions (\ref{Proposition 6.1 (1)}) and (\ref{Proposition 6.1 (2)}) are equivalent.\par
	Note that $\mu^{-1}(0)$ is a principal $G$-bundle over $X \sslash G$. Let $q: \mu^{-1}(0) \to X \sslash G$ be the bundle projection. Observe that $C^0 = q^{-1}(C_{\operatorname{red}})$ and $(C')^0 = q^{-1}(C'_{\operatorname{red}})$. Hence, $\eta^{-1}(0) = q^{-1}(S_{\operatorname{red}})$. We can then easily show that Conditions (\ref{Proposition 6.1 (2)}) and (\ref{Proposition 6.1 (3)}) are equivalent.\par
	It is then evident that the conclusion is true when one of the equivalent conditions holds.
\end{proof}
Therefore, further assuming that the submanifolds $C_{\operatorname{red}}, C'_{\operatorname{red}}$ of $X \sslash G$ intersect cleanly, we can obtain $S_{\operatorname{red}}$ from $S$ (resp. $\underline{\operatorname{Hom}}(\mathcal{B}_{\operatorname{red}}, \mathcal{B}'_{\operatorname{red}})$ from $\underline{\operatorname{Hom}}(\mathcal{B}, \mathcal{B}')$) by the following procedure:
\begin{enumerate}
	\item restricting it to the submanifold $\eta^{-1}(0)$, and then
	\item taking the $G$-quotient of this restriction.
\end{enumerate}
Loosely speaking, we conclude that \emph{intersections of A-branes commute with brane reduction}.

\begin{example}
	Endow $X = \{ (z^1, z^2, w^1, w^2) \in \mathbb{C}^4: (z^1, z^2) \neq (0, 0) \}$ with the symplectic form $\omega = \tfrac{\sqrt{-1}}{2} \sum_{i=1}^2 (dz^i \wedge d\overline{z}^i + dw^i \wedge d\overline{w}^i)$. Then $G = \mathbb{S}^1$ acts on $(X, \omega)$ via a free Hamiltonian action
	\begin{equation*}
		\lambda \cdot (z^1, z^2, w^1, w^2) = (\lambda z^1, \lambda z^2, \lambda^{-1} w^1, \lambda^{-1} w^2),
	\end{equation*}
	with moment map $\langle \mu, a \rangle = -\tfrac{1}{2} ( \lvert z^1 \rvert^2 + \lvert z^2 \rvert^2 - \lvert w^1 \rvert^2 - \lvert w^1 \rvert^2 - 1 )$, where $a \in \mathfrak{g}$ is the generator whose fundamental vector field is given by
	\begin{equation*}
		\chi(a) = \sqrt{-1} \textstyle\sum_{i=1}^2 \left( z^i \tfrac{\partial}{\partial z^i} - \overline{z}^i \tfrac{\partial}{\partial \overline{z}^i} - w^i \tfrac{\partial}{\partial w^i} + \overline{w}^i \tfrac{\partial}{\partial \overline{w}^i} \right).
	\end{equation*}
	Consider the trivial Hermitian line bundle $X \times \mathbb{C}$ equipped with the $\mathbb{S}^1$-action given by
	\begin{equation*}
		\lambda \cdot (z^1, z^2, w^1, w^2, c) = (\lambda z^1, \lambda z^2, \lambda^{-1} w^1, \lambda^{-1} w^2, \lambda c),
	\end{equation*}
	covering the $\mathbb{S}^1$-action on $X$. We then obtain two $\mathbb{S}^1$-invariant space-filling A-branes on $(X, \omega)$:
	\begin{align*}
		\mathcal{B} := & \left(X, X \times \mathbb{C}, d - \tfrac{\sqrt{-1}}{2} \textstyle\sum_{i=1}^2 \operatorname{Re} \left( z^i dw^i - w^i dz^i \right) \right),\\
		\mathcal{B}' := & \left(X, X \times \mathbb{C}, d - \tfrac{\sqrt{-1}}{2} \textstyle\sum_{i=1}^2 \operatorname{Im} \left( z^i dw^i - w^i dz^i \right) \right).
	\end{align*}
	The respective holomorphic symplectic forms on $X$ associated with $\mathcal{B}, \mathcal{B}'$ are of the form $F + \sqrt{-1} \omega$ and $F' + \sqrt{-1} \omega$, where
	\begin{equation*}
		F := \operatorname{Re} (dz^1 \wedge dw^1 + dz^2 \wedge dw^2) \quad \text{and} \quad F' := \operatorname{Im} (dz^1 \wedge dw^1 + dz^2 \wedge dw^2).
	\end{equation*}
	The real parts of the respective moment traces $\nu, \nu': X \to \mathfrak{g}^* \otimes \mathbb{C}$ of $\mathcal{B}, \mathcal{B}'$ are
	\begin{align*}
		\langle \operatorname{Re} \nu, a \rangle (z^1, z^2, w^1, w^2) = & \operatorname{Re} (\sqrt{-1} (z^1 w^1 + z^2 w^2));\\
		\langle \operatorname{Re} \nu', a \rangle (z^1, z^2, w^1, w^2) = & \operatorname{Im} (\sqrt{-1} (z^1 w^1 + z^2 w^2)).
	\end{align*}
	Indeed, $X$ is a hyperK\"ahler manifold with the triple of K\"ahler forms $(F, F', \omega)$. The $\mathbb{S}^1$-action is Hamiltonian with respect to all three forms, with the hyperK\"ahler moment map given by:
	\begin{equation*}
		\eta := (\operatorname{Re} \nu, \operatorname{Re} \nu', \mu): X \to \mathfrak{g}^* \otimes \mathbb{R}^3.
	\end{equation*}
	The intersection of the supports of the two brane reductions, $\mathcal{B}_{\operatorname{red}}$ and $\mathcal{B}'_{\operatorname{red}}$, is precisely the hyperK\"ahler quotient $X \tripleslash \mathbb{S}^1 := \eta^{-1}(0) / \mathbb{S}^1$.
\end{example}

It is intriguing to study the relationship between the sheaf cohomologies of transversely holomorphic sections of $\underline{\operatorname{Hom}}(\mathcal{B}, \mathcal{B}')$ and $\underline{\operatorname{Hom}}(\mathcal{B}_{\operatorname{red}}, \mathcal{B}'_{\operatorname{red}})$ when both $S$ and $S_{\operatorname{red}}$ admit adapted transversely holomorphic structures. In the next section, we will examine these sheaf cohomologies in some special cases, assuming that $X$ is the cotangent bundle of a smooth $G$-manifold.

\section{The A-model on the cotangent bundle of a $G$-manifold}
\label{Section 7}
In this section, we consider the A-model on $(X, \omega) := (T^*M, -d\theta)$, where $M$ is a smooth manifold and $\theta$ is the Liouville form on $T^*M$. Except in Subsection \ref{Subsection 7.1}, we always assume that $M$ is equipped with a smooth $G$-action $\rho_M$, inducing a Hamiltonian $G$-action on $(X, \omega)$.\par
The main part of this section are Subsections \ref{Subsection 7.3} and \ref{Subsection 7.4}, in which we will give an interpretation of equivariant Dolbeault cohomology and Guillemin-Sternberg theorem \cite{GuiSte1982} via sheaf cohomologies (\ref{Equation 5.2}) on intersections of $G$-invariant A-branes on $X$.\par
These $G$-invariant A-branes come from the conormal construction of Lagrangian submanifolds of $X$ and the construction of twisted cotangent bundles. In Subsection \ref{Subsection 7.1}, we will combine these two constructions to produce non-space filling non-Lagrangian A-branes. In Subsection \ref{Subsection 7.2}, we will study their $G$-invariance, as a preparation for Subsections \ref{Subsection 7.3} and \ref{Subsection 7.4}.

\subsection{Non-Lagrangian A-branes from vector bundles over submanifolds}
\quad\par
\label{Subsection 7.1}
Let $Z$ be a smooth submanifold of $M$. Recall that, for any flat Hermitian vector bundle $V_Z$ over $Z$, the conormal bundle $N^*Z$ equipped with the pullback of $V_Z$ is a Lagrangian A-brane $\mathcal{B}_{Z, V_Z}^{\operatorname{L}}$ on $(X, \omega)$. Now, we introduce a hybrid of this construction and the construction of twisted cotangent bundles \cite{Don2002} so as to produce (not necessarily Lagrangian) A-branes on $(X, \omega)$ supported on
\begin{equation*}
	C := T^*M \vert_Z.
\end{equation*}
We can see that $C$ is a coisotropic submanifold of $(X, \omega)$ by the following reason. Let $\theta'$ be the Liouville form on $Y$ and $\omega' = -d\theta'$. Notice that there is a canonical short exact sequence
\begin{equation*}
	\begin{tikzcd}
		N^*Z \ar[r] & C \ar[r, "p"] & Y := T^*Z
	\end{tikzcd}
\end{equation*}
of vector bundles over $Z$. As $p^*\theta' = \theta \vert_C$, $p^*\omega' = \omega \vert_C$. Therefore, $(TC)^{\perp \omega} = \ker dp \subset TC$.\par
To construct an A-brane supported on $C$, we need to obtain a transverse holomorphic symplectic form $\Omega$ on $C$ such that $\operatorname{Im} \Omega = \omega \vert_C$. One way is to choose a complex structure $J$ on $Z$. Then $J$ induces a complex structure $I_{\operatorname{can}}$ on $Y$ and an $I_{\operatorname{can}}$-holomorphic $(1, 0)$-form $\Theta_{\operatorname{can}}$ on $Y$ such that
\begin{itemize}
	\item the restriction of $\Theta_{\operatorname{can}}$ on each fibre of the bundle projection $\pi': Y \to Z$ is zero;
	\item $\Omega_{\operatorname{can}} := -d\Theta_{\operatorname{can}}$ is an $I_{\operatorname{can}}$-holomorphic symplectic form on $Y$; and
	\item the imaginary part of $\Theta_{\operatorname{can}}$ is $\operatorname{Im} \Theta_{\operatorname{can}} = \theta'$.
\end{itemize}
In particular, fibres of $\pi': Y \to Z$ are $\Omega_{\operatorname{can}}$-Lagrangian submanifolds of $Y$ and $\operatorname{Im} \Omega_{\operatorname{can}} = \omega'$. We can then obtain a desirable $2$-form on $C$ by pulling back $\Omega_{\operatorname{can}}$ along $p: C \to Y$. Furthermore, we can deform $\Omega_{\operatorname{can}}$ by a $d$-closed $(1, 1)$-form on $(Z, J)$. Let $\pi: X \to M$ be the bundle projection.

\begin{proposition}
	\label{Proposition 7.1}
	Suppose that $Z$ admits a complex structure $J$ and a Hermitian $J$-holomorphic vector bundle $E_Z$ over it with Chern connection $\nabla^{E_Z}$. Define
	\begin{equation}
		\label{Equation 7.1}
		C := T^*M \vert_Z, \quad E := (\pi \vert_C)^* E_Z \quad \text{and} \quad \nabla := (\pi \vert_C)^*\nabla^{E_Z} + \sqrt{-1} p^*(\operatorname{Re} \Theta_{\operatorname{can}}) \cdot \operatorname{Id}_E.
	\end{equation}
	Then $\mathcal{B}_{Z, E_Z}^{\operatorname{C}} = (C, E, \nabla)$ is a coisotropic A-brane on $(X, \omega)$.
\end{proposition}
\begin{proof}
	Let $\omega_Z = F_{\operatorname{tr}}^{\nabla^{E_Z}}$. Note that $\omega_Z$ is a $d$-closed real $2$-form of type $(1, 1)$ on $Z$. By Theorem 3.3 in \cite{BogDeeVer2022} (see also \cite{Don2002}), there exists a unique complex structure $I$ on $Y$ such that
	\begin{itemize}
		\item $\Omega := \Omega_{\operatorname{can}} + (\pi')^*\omega_Z$ is an $I$-holomorphic symplectic form on $Y$;
		\item $\pi': (Y, I) \to (Z, J)$ is holomorphic; and
		\item fibres of $\pi'$ are $\Omega$-Lagrangian submanifolds of $Y$.
	\end{itemize}
	Since $\pi \vert_C = \pi' \circ p$, $(E, \nabla)$ is the pullback of $(E', \nabla')$ along $p$, where
	\begin{equation*}
		E' := (\pi')^*E_Z \quad \text{and} \quad \nabla' := (\pi')^*\nabla^{E_Z} + \sqrt{-1} \operatorname{Re} \Theta_{\operatorname{can}} \cdot \operatorname{Id}_{E'}.
	\end{equation*}
	By direct computations, $\omega' = \operatorname{Im} \Omega$, $F_{\operatorname{tr}}^{\nabla'} = \operatorname{Re} \Omega$ and $F_0^{\nabla'} = (\pi')^* F_0^{\nabla^{E_Z}}$. Then the kernel of $F_{\operatorname{tr}}^\nabla = p^*\operatorname{Re} \Omega$ is $(TC)^{\perp \omega}$, and $F_{\operatorname{tr}}^\nabla + \sqrt{-1} \omega \vert_C = p^*\Omega$ is a transverse holomorphic symplectic form on $C$. As $F_0^{\nabla^{E_Z}}$ is of type $(1, 1)$, so is $F_0^{\nabla'}$. Therefore, $F_0^\nabla = p^*F_0^{\nabla'}$ is of transverse type $(1, 1)$.
\end{proof}

\begin{remark}
	In general, $(Y, I)$ and $(Y, I_{\operatorname{can}})$ are not biholomorphic, unless $\omega_Z = d\alpha_Z$ for some $(1, 0)$-form $\alpha_Z$ on $Z$. For more details, readers can refer to \cite{Lis1985}.
\end{remark}

\subsection{$G$-invariant A-branes on the cotangent bundle of a $G$-manifold}
\label{Subsection 7.2}
\quad\par
Recall that the $G$-action $\rho_M$ on $M$ naturally induces a $G$-action $\rho$ on $X$ such that $\pi: X \to M$ is $G$-equivariant. Indeed, $\rho$ is a Hamiltonian $G$-action on $(X, \omega)$ with a moment map $\mu: X \to \mathfrak{g}^*$ defined by $\langle \mu, a \rangle = \iota_{\chi(a)} \theta$ for all $a \in \mathfrak{g}$, where $\chi$ is the infinitesimal $\mathfrak{g}$-action on $X$ induced by $\rho$.\par
Suppose that $Z$ is a smooth submanifold of $M$ and $V_Z$ is a Hermitian vector bundle over $Z$ with a flat unitary connection. We can easily see that, if $Z$ is $G$-invariant and $V_Z$ is $G$-equivariant, then the Lagrangian A-brane $\mathcal{B}_{Z, V_Z}^{\operatorname{L}}$ is $G$-invariant with respect to the Hamiltonian $G$-action on $(X, \omega)$. Now we discuss $G$-invariance of the A-brane $\mathcal{B}_{Z, E_Z}^{\operatorname{C}}$ appeared in Subsection \ref{Subsection 7.1}.

\begin{proposition}
	Let $Z$ be a $G$-invariant smooth submanifold of $M$. Suppose that
	\begin{enumerate}
		\item $Z$ admits a complex structure and $E_Z$ is a Hermitian holomorphic vector bundle over $Z$ with Chern connection $\nabla^{E_Z}$;
		\item the induced $G$-action $\rho_Z$ on $Z$ is holomorphic and $(E_Z, \nabla^{E_Z})$ is $G$-equivariant.
	\end{enumerate}
	Then the coisotropic A-brane $\mathcal{B}_{Z, E_Z}^{\operatorname{C}} = (C, E, \nabla)$ defined as in (\ref{Equation 7.1}) is $G$-invariant.
\end{proposition}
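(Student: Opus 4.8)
The plan is to check the two requirements of $G$-invariance from Subsection~\ref{Subsection 3.3} in turn: that $C = T^*M\vert_Z$ is a $G$-invariant submanifold, and that $(E, \nabla)$ is a $G$-equivariant Hermitian vector bundle. The first is immediate, since $\pi \colon X \to M$ is $G$-equivariant and $Z$ is $G$-invariant, so $C = \pi^{-1}(Z)$ is preserved by every $\rho(g)$. For the bundle, I would use that $\pi\vert_C \colon C \to Z$ is $G$-equivariant; hence $E = (\pi\vert_C)^*E_Z$ is the pullback of the $G$-equivariant bundle $E_Z$ along an equivariant map and is therefore $G$-equivariant, and its Hermitian metric, being pulled back from the $G$-invariant metric on $E_Z$, is $G$-invariant.

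The content lies in proving that $\nabla = (\pi\vert_C)^*\nabla^{E_Z} + \sqrt{-1}\,p^*(\operatorname{Re}\Theta_{\operatorname{can}})\cdot\operatorname{Id}_E$ is $G$-invariant, which I would establish term by term. The first summand is the pullback of the $G$-invariant connection $\nabla^{E_Z}$ along the equivariant map $\pi\vert_C$, so it is $G$-invariant by naturality. For the second summand it is enough to show that the real $1$-form $p^*(\operatorname{Re}\Theta_{\operatorname{can}})$ on $C$ is $G$-invariant. I would first record that the restriction map $p \colon C = T^*M\vert_Z \to Y = T^*Z$ intertwines the action on $C$ inherited from $X$ with the cotangent lift $\rho_Y$ of $\rho_Z$ on $Y$; this is a direct check on covectors using that $\rho_M(g)$ preserves $Z$. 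It then suffices to prove that $\Theta_{\operatorname{can}}$ itself is $\rho_Y$-invariant.

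This last point is the crux and the main obstacle. I would invoke the characterization of $\Theta_{\operatorname{can}}$ from Subsection~\ref{Subsection 7.1} as the $I_{\operatorname{can}}$-holomorphic $(1,0)$-form on $Y$ that vanishes on the fibres of $\pi' \colon Y \to Z$ and satisfies $\operatorname{Im}\Theta_{\operatorname{can}} = \theta'$, and argue that each $\rho_Y(g)^*\Theta_{\operatorname{can}}$ enjoys the same three properties. Indeed, since $\rho_Z$ is holomorphic, the canonical complex structure $I_{\operatorname{can}}$ determined by $J$ is preserved by $\rho_Y$ (the cotangent lift of a $J$-biholomorphism is $I_{\operatorname{can}}$-holomorphic); the fibres of $\pi'$ are permuted among themselves because $\pi'$ is $G$-equivariant; and the real Liouville form $\theta'$ is invariant under any cotangent lift. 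By uniqueness, $\rho_Y(g)^*\Theta_{\operatorname{can}} = \Theta_{\operatorname{can}}$ for all $g \in G$, whence $\operatorname{Re}\Theta_{\operatorname{can}}$ and, via equivariance of $p$, the form $p^*(\operatorname{Re}\Theta_{\operatorname{can}})$ are $G$-invariant, completing the verification that $\nabla$ is $G$-invariant. The only delicate ingredient is the functoriality of the canonical data $(I_{\operatorname{can}}, \Theta_{\operatorname{can}})$ under holomorphic maps; I expect this to follow directly from the naturality of the construction in Subsection~\ref{Subsection 7.1}, equivalently from the naturality of the holomorphic Liouville form on the holomorphic cotangent bundle under the identification $T^*Z \cong T^{*(1,0)}Z$.
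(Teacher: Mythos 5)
Your proposal is correct and follows essentially the same route as the paper: both reduce everything to the $G$-equivariance of $(E',\nabla')$ on $Y=T^*Z$ via the pullback along $p$, and both rest on the two key facts that the cotangent lift $\rho'$ of the holomorphic action $\rho_Z$ is $I_{\operatorname{can}}$-holomorphic and preserves $\theta'=\operatorname{Im}\Theta_{\operatorname{can}}$. The only (cosmetic) difference is that you deduce $\rho'$-invariance of $\Theta_{\operatorname{can}}$ from its uniqueness characterization, whereas the paper uses directly that a $(1,0)$-form is determined by its imaginary part together with $I_{\operatorname{can}}$.
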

\begin{proof}
	The coisotropic submanifold $C = T^*X \vert_Z$ is clearly $G$-invariant and $\rho_Z$ induces a $G$-action $\rho'$ on $Y = T^*Z$. There are naturally induced $G$-actions on $E$ and $E' = (\pi')^*E_Z$ such that all the arrows in the pullback diagram below are $G$-equivariant:
	\begin{center}
		\begin{tikzcd}
			E \ar[r] \ar[d] & E' \ar[r] \ar[d] & E_Z \ar[d]\\
			C \ar[r, "p"'] & Y \ar[r, "\pi{'}"'] & Z
		\end{tikzcd}
	\end{center}
	Since $(E_Z, \nabla^{E_Z})$ is $G$-equivariant, so is $(E', (\pi')^*\nabla^{E_Z})$.\par
	We know that the $G$-action $\rho'$ on $Y$ preserves $\operatorname{Im} \Theta_{\operatorname{can}}$. Since $G$ acts on $Z$ holomorphically, $\rho'$ is $I_{\operatorname{can}}$-holomorphic. Noting that $\Theta_{\operatorname{can}}$ is $I_{\operatorname{can}}$-holomorphic, $\rho'$ also preserves $\operatorname{Re} \Theta_{\operatorname{can}}$. Therefore, $(E', \nabla')$ is $G$-equivariant. By the same reason as above, $(E, \nabla)$ is $G$-equivariant.
\end{proof}

Finally, we compute the moment trace of $\mathcal{B}_{Z, E_Z}^{\operatorname{C}}$. Define $\omega_Z = F_{\operatorname{tr}}^{\nabla^{E_Z}}$. There is a unique $G$-equivariant smooth map $\mu_Z: Z \to \mathfrak{g}^*$ such that
\begin{itemize}
	\item if $a \in \mathfrak{g}$, $x \in Z$ and $u \in T_xZ$, then $\omega_Z(\chi_Z(a)(x), u) = \langle d\mu_Z(u), a \rangle$; and
	\item if $a \in \mathfrak{g}$ and $s \in \Gamma(Z, \det E_Z)$, then $\mathcal{L}_a^{\det E_Z} s = \nabla_{\chi_Z(a)}^{\det E_Z} s - \sqrt{-1} (\operatorname{rank} E_Z) \cdot  \langle \mu_Z, a \rangle s$.
\end{itemize}
Here, $\chi_Z$ is the infinitesimal $\mathfrak{g}$-action on $Z$ and $\nabla^{\det E_Z}$ is the connection on $\det E_Z$ induced by $\nabla^{E_Z}$. Let $\chi'$ be the infinitesimal $\mathfrak{g}$-action on $Y$. Define the map $\nu_{\operatorname{can}}: Y \to \mathfrak{g}^* \otimes \mathbb{C}$ by
\begin{equation}
	\label{Equation 7.2}
	\langle \nu_{\operatorname{can}}, a \rangle = \iota_{\chi'(a)} \Theta_{\operatorname{can}} \quad \text{for all } a \in \mathfrak{g}.
\end{equation}

\begin{proposition}
	The moment trace of the $G$-invariant coisotropic A-brane $\mathcal{B}_{Z, E_Z}^{\operatorname{C}}$ is
	\begin{equation*}
		\nu := p^*\nu_{\operatorname{can}} + (\pi \vert_C)^*\mu_Z: C \to \mathfrak{g}^* \otimes \mathbb{C}.
	\end{equation*}
\end{proposition}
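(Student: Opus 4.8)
The plan is to compute the moment trace directly from its defining formula (\ref{Equation 3.4}), $\nu = \tfrac{\sqrt{-1}}{r}\operatorname{Tr}\mu_{\mathcal{B}}$ with $r = \operatorname{rank} E = \operatorname{rank} E_Z$, by splitting $\nu$ into its real and imaginary parts and matching each against the corresponding part of $p^*\nu_{\operatorname{can}} + (\pi\vert_C)^*\mu_Z$. Writing $\langle\mu^\nabla, a\rangle = \mathcal{L}_a^E - \nabla_{\chi(a)}$ as in Subsection \ref{Subsection 3.2}, Definition \ref{Definition 3.4} gives $\langle\mu_{\mathcal{B}}, a\rangle = \langle\mu^\nabla, a\rangle + \langle\mu\vert_C, a\rangle\operatorname{Id}_E$, so that $\langle\nu, a\rangle = \tfrac{\sqrt{-1}}{r}\operatorname{Tr}\langle\mu^\nabla, a\rangle + \sqrt{-1}\langle\mu\vert_C, a\rangle$. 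Since $\mu^\nabla$ is valued in $\mathfrak{u}(E)$ by Proposition \ref{Proposition 3.2}, the first summand is real and the second imaginary, whence $\operatorname{Im}\nu = \mu\vert_C$ and $\operatorname{Re}\langle\nu, a\rangle = \tfrac{\sqrt{-1}}{r}\operatorname{Tr}\langle\mu^\nabla, a\rangle$.

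For the imaginary part I would use that $\mu_Z$ is real, so $\operatorname{Im}(p^*\nu_{\operatorname{can}} + (\pi\vert_C)^*\mu_Z) = p^*\operatorname{Im}\nu_{\operatorname{can}}$. Since $\operatorname{Im}\Theta_{\operatorname{can}} = \theta'$ and $\langle\nu_{\operatorname{can}}, a\rangle = \iota_{\chi'(a)}\Theta_{\operatorname{can}}$, this equals $p^*(\iota_{\chi'(a)}\theta')$. The $G$-equivariance of $p$ gives $dp\circ\chi(a) = \chi'(a)$, and combined with the identity $p^*\theta' = \theta\vert_C$ from Subsection \ref{Subsection 7.1} this yields $p^*(\iota_{\chi'(a)}\theta') = \iota_{\chi(a)}(p^*\theta') = \iota_{\chi(a)}(\theta\vert_C) = \langle\mu\vert_C, a\rangle$, matching $\operatorname{Im}\nu$.

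The real part requires decomposing the connection. Writing $\nabla = (\pi\vert_C)^*\nabla^{E_Z} + \sqrt{-1} p^*(\operatorname{Re}\Theta_{\operatorname{can}})\operatorname{Id}_E$ as in (\ref{Equation 7.1}), the scalar term contributes $-\sqrt{-1}(p^*\operatorname{Re}\Theta_{\operatorname{can}})(\chi(a))\operatorname{Id}_E$ to $\langle\mu^\nabla, a\rangle$; applying $\tfrac{\sqrt{-1}}{r}\operatorname{Tr}$ and using $dp\circ\chi(a)=\chi'(a)$ again turns this into $(p^*\operatorname{Re}\Theta_{\operatorname{can}})(\chi(a)) = \operatorname{Re}\langle p^*\nu_{\operatorname{can}}, a\rangle$. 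The remaining pullback term $\mathcal{L}_a^E - (\pi\vert_C)^*\nabla^{E_Z}_{\chi(a)}$ I would identify, using $G$-equivariance of $\pi\vert_C = \pi'\circ p$ (so that $d(\pi\vert_C)\circ\chi(a) = \chi_Z(a)$) and naturality of the moment section under equivariant pullback, with $(\pi\vert_C)^*\langle\mu^{\nabla^{E_Z}}, a\rangle$. Its trace is the moment section on $\det E_Z$, which by the defining property of $\mu_Z$ equals $-\sqrt{-1} r\langle\mu_Z, a\rangle$, so $\tfrac{\sqrt{-1}}{r}\operatorname{Tr}$ of it is $(\pi\vert_C)^*\langle\mu_Z, a\rangle$. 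Adding the two contributions gives $\operatorname{Re}\langle\nu, a\rangle = \operatorname{Re}\langle p^*\nu_{\operatorname{can}}, a\rangle + (\pi\vert_C)^*\langle\mu_Z, a\rangle$, and combining with the imaginary part yields the claimed formula.

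I expect the main obstacle to be the naturality statement $\mathcal{L}_a^E - (\pi\vert_C)^*\nabla^{E_Z}_{\chi(a)} = (\pi\vert_C)^*(\mathcal{L}_a^{E_Z} - \nabla^{E_Z}_{\chi_Z(a)})$: one must check that the infinitesimal action on sections of the pulled-back equivariant bundle is compatible with the pulled-back covariant derivative along $\chi(a)$, which hinges on $d(\pi\vert_C)\circ\chi(a) = \chi_Z(a)$ and on the identification of the $\operatorname{Tr}$ of the moment section on $E_Z$ with the scalar moment section on $\det E_Z$. Once this is in place, the rest is routine bookkeeping of real and imaginary parts and of the factor $\sqrt{-1}\cdot(-\sqrt{-1}) = 1$.
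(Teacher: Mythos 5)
Your proposal is correct and follows essentially the same route as the paper: the paper verifies the two conditions that uniquely characterize the moment trace by computing the induced connection on $\det E$ and invoking the defining property of $\mu_Z$, which is exactly your real-part computation with the trace taken at the level of $\det E$ rather than of $\operatorname{End}(E)$, and the imaginary-part argument via $p^*\theta' = \theta\vert_C$ is identical. The naturality step you flag (compatibility of $\mathcal{L}_a^E - ((\pi\vert_C)^*\nabla^{E_Z})_{\chi(a)}$ with the pullback of the moment section of $E_Z$) is indeed the only point needing care, and it holds because the pullback diagram of equivariant bundles in Subsection \ref{Subsection 7.2} is $G$-equivariant and $d(\pi\vert_C)\circ\chi(a) = \chi_Z(a)$.
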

\begin{proof}
	The connection $\nabla^{\det E}$ on $\det E$ induced by $\nabla$ is given by
	\begin{equation*}
		\nabla^{\det E} = (\pi \vert_C)^*\nabla^{\det E_Z} + \sqrt{-1} (\operatorname{rank} E) \cdot p^*(\operatorname{Re} \Theta_{\operatorname{can}}).
	\end{equation*}
	As $E$ and $E_Z$ are of the same rank, for all $a \in \mathfrak{g}$ and $s \in \Gamma(X, E)$,
	\begin{align*}
		\mathcal{L}_a^E s = & ((\pi \vert_C)^*\nabla^{\det E_Z})_{\chi(a)} s - \sqrt{-1} (\operatorname{rank} E_Z) \cdot \langle (\pi \vert_C)^* \mu_Z, a \rangle s\\
		= & \nabla_{\chi(a)}^{\det E} s - \sqrt{-1} (\operatorname{rank} E) \cdot \langle \operatorname{Re} \nu, a \rangle s.
	\end{align*}
	The equality on the second line is due to the fact that $\langle \operatorname{Re} \nu, a \rangle = \iota_{\chi(a)} p^* (\operatorname{Re} \Theta_{\operatorname{can}}) + \langle (\pi \vert_C)^*\mu_Z, a \rangle$. On the other hand, $\langle \operatorname{Im} \nu, a \rangle = \iota_{\chi(a)}p^*\operatorname{Im} \Theta_{\operatorname{can}} = \iota_{\chi(a)} \theta = \langle \mu, a \rangle$. We are done.
\end{proof}

It is shown in \cite{LerMonSja1993} that $\mu^{-1}(0) = \bigsqcup_{x \in M} N_x^*(G \cdot x)$. Moreover, $G$ acts on $\mu^{-1}(0)$ freely if and only if $G$ acts on $M$ freely, in which case $0$ is a regular value of $\mu: X \to \mathfrak{g}^*$ and
\begin{equation*}
	X \sslash G \cong T^*(M/G).
\end{equation*}
In the next subsections, we will discuss brane reduction of the $G$-invariant A-branes $\mathcal{B}_{Z, V_Z}^{\operatorname{L}}, \mathcal{B}_{Z, E_Z}^{\operatorname{C}}$ in special cases and see how sheaf cohomologies (\ref{Equation 5.2}) for these pairs are related to equivariant Dolbeault cohomology and Guillemin-Sternberg theorem \cite{GuiSte1982}.

\subsection{Equivariant Dolbeault cohomology in terms of intersections of brane reductions}
\label{Subsection 7.3}
\quad\par
Consider the following situation:
\begin{itemize}
	\item Let $(M, J)$ be a (not necessarily compact) complex manifold acted by $G$ holomorphically such that the induced $(\mathfrak{g}_\mathbb{C}, G)$-action $(\widetilde{\chi}_M, \rho_M)$ on $M$ is free (see Definition \ref{Definition C.1}).
	\item Let $(L_M, \nabla^{L_M})$ be a $G$-equivariant flat Hermitian line bundle over $M$ such that $\nabla^{L_M}$ restricts to the infinitesimal $\mathfrak{g}$-action on $L_M$.
\end{itemize}
Then the quotient manifold $M_0 := M/G$ admits a unique transverse holomorphic structure $\mathcal{E}_0$ such that $dq_M: T^{0, 1}M \to q_M^*\mathcal{E}_0$ is a vector bundle isomorphism, where $q_M: M \to M_0$ is the quotient map (see Proposition \ref{Proposition C.2}). By Proposition \ref{Proposition 4.8}, $(L_M, \nabla^{L_M})$ descends to a Hermitian line bundle $(L_{M_0}, \nabla^{L_{M_0}})$. Note that $\nabla^{L_{M_0}}$ restricts to a flat $\mathcal{E}_0$-connection $d_{\mathcal{E}_0}^{L_{M_0}}$.\par
Our goal is to reformulate the following proposition in terms of sheaf cohomologies (\ref{Equation 5.2}) on intersections of A-branes by considering the $G$-invariant A-branes $\mathcal{B} = \mathcal{B}_{M, M \times \mathbb{C}}^{\operatorname{L}}$ and $\mathcal{B}_{\operatorname{cc}} = \mathcal{B}_{M, L_M}^{\operatorname{C}}$. 

\begin{proposition}
	\label{Proposition 7.5}
	There is a linear isomorphism:
	\begin{equation*}
		H_{\mathcal{E}_0}^*(M_0, L_{M_0}) \cong H_{\overline{\partial}}^{0, *}(M, L_M)^G.
	\end{equation*}
\end{proposition}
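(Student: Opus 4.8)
The plan is to produce an explicit cochain isomorphism between the Chevalley--Eilenberg complex computing $H_{\mathcal{E}_0}^*(M_0, L_{M_0})$ and the $G$-invariant part of the Dolbeault complex $\Omega^{0,*}(M, L_M)$, induced by pullback along the quotient map $q_M \colon M \to M_0$. Recall that $L_M$ is flat, so $\nabla^{0,1}$ squares to zero and endows $L_M$ with a holomorphic structure whose Dolbeault operator $\overline{\partial}^{L_M} = \nabla^{0,1}$ computes $H^{0,*}_{\overline{\partial}}(M, L_M)$; likewise $H_{\mathcal{E}_0}^*(M_0, L_{M_0})$ is the cohomology of $\left( \Gamma\left(M_0, \bigwedge \mathcal{E}_0^* \otimes L_{M_0}\right), d_{\mathcal{E}_0}^{L_{M_0}} \right)$. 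Thus everything is phrased as Chevalley--Eilenberg cohomology: on $M$ for the involutive bundle $T^{0,1}M$, and on $M_0$ for the involutive bundle $\mathcal{E}_0$.

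First I would set up the isomorphism of underlying graded vector spaces. The isomorphism $dq_M \colon T^{0,1}M \xrightarrow{\sim} q_M^* \mathcal{E}_0$ (Proposition \ref{Proposition C.2}, using freeness of the $(\mathfrak{g}_\mathbb{C}, G)$-action) dualizes and wedges to a $G$-equivariant isomorphism $\bigwedge^k (T^{0,1}M)^* \cong q_M^* \bigwedge^k \mathcal{E}_0^*$, while $L_M \cong q_M^* L_{M_0}$ as $G$-equivariant bundles. Tensoring, $\bigwedge^k (T^{0,1}M)^* \otimes L_M \cong q_M^*\left(\bigwedge^k \mathcal{E}_0^* \otimes L_{M_0}\right)$. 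Since $G$ acts freely with quotient $M_0$, the $G$-invariant sections of a pulled-back bundle are precisely the pullbacks of sections downstairs; hence $q_M^*$ yields a graded linear isomorphism $\Gamma\left(M_0, \bigwedge \mathcal{E}_0^* \otimes L_{M_0}\right) \xrightarrow{\sim} \Omega^{0,*}(M, L_M)^G$.

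Next I would check that $q_M^*$ is a cochain map, i.e. $q_M^* \circ d_{\mathcal{E}_0}^{L_{M_0}} = \overline{\partial}^{L_M} \circ q_M^*$; this is where the real work lies and the step I expect to be the main obstacle. The point is that $dq_M$ restricts to an \emph{isomorphism of Lie algebroids} $T^{0,1}M \cong q_M^* \mathcal{E}_0$: it intertwines brackets of $q_M$-projectable (equivalently, $G$-invariant) sections because it is the differential of a map, and it intertwines the flat connections because $\nabla^{L_M} = q_M^* \nabla^{L_{M_0}}$ restricts to the infinitesimal $\mathfrak{g}$-action, so that $\overline{\partial}^{L_M}$ is the $T^{0,1}M$-restriction of the pullback of $d_{\mathcal{E}_0}^{L_{M_0}}$. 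By naturality of the Chevalley--Eilenberg differential under a Lie algebroid morphism covering a submersion, $q_M^*$ then commutes with the two differentials. The subtlety to verify carefully is that $T^{0,1}M$ genuinely maps \emph{onto} all of $\mathcal{E}_0$, including the leafwise part $\mathcal{E}_0 \cap \overline{\mathcal{E}_0} = \mathcal{F}_0 \otimes \mathbb{C}$ into which the $(0,1)$-components of the imaginary directions $J\chi_M(a)$ are sent, so that no bracket or connection term is lost in the identification. Combining this with the previous step, $q_M^*$ is an isomorphism of cochain complexes, whence $H_{\mathcal{E}_0}^*(M_0, L_{M_0}) \cong H^*\left(\Omega^{0,*}(M, L_M)^G, \overline{\partial}^{L_M}\right)$.

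Finally I would identify the cohomology of the $G$-invariant subcomplex with the $G$-invariant part of the full Dolbeault cohomology. Since $G$ is compact, averaging $P = \int_G g \, dg$ is an idempotent cochain endomorphism of $\Omega^{0,*}(M, L_M)$ with image $\Omega^{0,*}(M, L_M)^G$ and with $P \circ \iota = \operatorname{id}$ for the inclusion $\iota$ of the invariant subcomplex; hence $\iota_*$ identifies $H^*\left(\Omega^{0,*}(M, L_M)^G, \overline{\partial}^{L_M}\right)$ with $H^{0,*}_{\overline{\partial}}(M, L_M)^G$, giving the desired isomorphism. (Since $G$ is moreover connected, its action on $H^{0,*}_{\overline{\partial}}(M, L_M)$ is in fact trivial, so the invariants equal the whole cohomology, though this is not needed here.)
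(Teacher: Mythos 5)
Your proposal is correct and follows essentially the same route as the paper: the paper simply cites Proposition \ref{Proposition C.3}, whose proof is exactly your pullback cochain isomorphism $q_M^*$ built from the Lie algebroid isomorphism $dq_M\vert_{T^{0,1}M}\colon T^{0,1}M \to q_M^*\mathcal{E}_0$ of Proposition \ref{Proposition C.2} together with the descent of the flat connection, and the identification of the cohomology of the invariant subcomplex with $H_{\overline{\partial}}^{0,*}(M, L_M)^G$ is the standard averaging fact the paper absorbs into its definition of equivariant Dolbeault cohomology in Appendix \ref{Appendix C}. The ``subtlety'' you flag about $T^{0,1}M$ surjecting onto all of $\mathcal{E}_0$ is already built into the construction of $\mathcal{E}_0$ in Proposition \ref{Proposition C.2}, so no gap remains.
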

\begin{proof}
	This is a direct consequence of Proposition \ref{Proposition C.3}.
\end{proof}

$\mathcal{B}$ and $\mathcal{B}_{\operatorname{cc}}$ have a transverse intersection being the zero section of $T^*M$. We identify it as $M$. Indeed, $M$ has the unique adapted complex structure $T^{0, 1}M$, and $\underline{\operatorname{Hom}}(\mathcal{B}, \mathcal{B}_{\operatorname{cc}}) = (L_M, \nabla^{L_M})$. Also, $\mathcal{B}_{\operatorname{red}}$ is the Lagrangian A-brane $\mathcal{B}_{M_0, M_0 \times \mathbb{C}}^{\operatorname{L}}$ on $X \sslash G = T^*M_0$. To describe $(\mathcal{B}_{\operatorname{cc}})_{\operatorname{red}}$, we first let $\mathcal{F}_0 = \mathcal{E}_0 \cap \overline{\mathcal{E}_0} \cap TM_0$ and $T_{\operatorname{quo}}M_0 = TM_0/\mathcal{F}_0$.

\begin{proposition}
	\label{Proposition 7.6}
	The moment section of $\mathcal{B}_{\operatorname{cc}}$ is neat. Moreover, the support of $(\mathcal{B}_{\operatorname{cc}})_{\operatorname{red}}$ is $T_{\operatorname{quo}}^*M_0$.
\end{proposition}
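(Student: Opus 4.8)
The plan is to reduce everything to the holomorphic moment map $\nu_{\operatorname{can}}$, identify $\nu^{-1}(0)$ as a conormal bundle, and then match its $G$-quotient against the transverse holomorphic data on $M_0$.

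First I would simplify the moment trace. Since $Z = M$ the map $p$ in the preceding computation of the moment trace is the identity, and since $L_M$ is flat of rank $1$ with $\nabla^{L_M}$ restricting to the infinitesimal $\mathfrak{g}$-action, the auxiliary map $\mu_Z$ vanishes: its first defining condition forces $d\mu_Z = 0$ because $\omega_Z = F_{\operatorname{tr}}^{\nabla^{L_M}} = 0$, and its second forces $\langle \mu_Z, a \rangle = 0$ because $\mathcal{L}_a^{L_M} = \nabla_{\chi_M(a)}^{L_M}$. Hence $\nu = \nu_{\operatorname{can}}$, with $\langle \nu_{\operatorname{can}}, a \rangle = \iota_{\chi(a)} \Theta_{\operatorname{can}}$. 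To read off $\nu^{-1}(0)$ I would separate real and imaginary parts. The relation $\operatorname{Im} \Theta_{\operatorname{can}} = \theta$ gives $\langle \operatorname{Im} \nu, a \rangle(\xi) = \iota_{\chi(a)} \theta = \langle \mu, a \rangle(\xi) = \xi(\chi_M(a))$; and since $\Theta_{\operatorname{can}}$ is of type $(1,0)$ for $I_{\operatorname{can}}$ one has $\operatorname{Re} \Theta_{\operatorname{can}} = \theta \circ I_{\operatorname{can}}$, so, using $\theta_\xi(\,\cdot\,) = \xi(d\pi(\,\cdot\,))$ together with $d\pi \circ I_{\operatorname{can}} = J \circ d\pi$ (holomorphy of $\pi$), one gets $\langle \operatorname{Re} \nu, a \rangle(\xi) = \xi(J\chi_M(a))$. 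Therefore $\nu^{-1}(0) = \{ \xi : \xi(\chi_M(a)) = \xi(J\chi_M(a)) = 0 \ \forall a \in \mathfrak{g} \}$ is exactly the real conormal bundle of the foliation whose leaves have tangent spaces $\widetilde{\chi}_M(\mathfrak{g}_\mathbb{C}) = \operatorname{span}_\mathbb{R}\{ \chi_M(a), J\chi_M(a) : a \in \mathfrak{g} \}$, i.e. the $\mathfrak{g}_\mathbb{C}$-orbit directions; by freeness of the $(\mathfrak{g}_\mathbb{C}, G)$-action (Definition~\ref{Definition C.1}) this subbundle has constant real rank $2 \dim G$.

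For neatness, since $E$ has rank $1$ the second condition is automatic, so it suffices to show $0$ is a clean value of $\nu$. The previous paragraph exhibits $\nu$ fiberwise as the $\mathbb{R}$-linear restriction $T_z^*M \to \mathfrak{g}^* \otimes \mathbb{C}$, $\xi \mapsto (\xi(\chi_M(\,\cdot\,)), \xi(J\chi_M(\,\cdot\,)))$, which is surjective because $\widetilde{\chi}_M(\mathfrak{g}_\mathbb{C})(z) \subset T_zM$ is a subspace of constant rank. Hence the vertical part of $d\nu$ already surjects onto $\mathfrak{g}^* \otimes \mathbb{C}$ at every point, so $0$ is a regular---in particular clean---value and $\mu_{\mathcal{B}_{\operatorname{cc}}}$ is neat.

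Finally I would identify the support. Under $X \sslash G \cong T^*(M/G) = T^*M_0$ coming from $\mu^{-1}(0) = \bigsqcup_z N_z^*(G \cdot z)$ \cite{LerMonSja1993}, the fiber $T_{[z]}^*M_0$ is the annihilator of the $G$-orbit tangent and a point $[\xi]$ corresponds to the covector $\overline{\xi}$ with $q_M^*\overline{\xi} = \xi$. The remaining task is to recognize $\mathcal{F}_0 = \mathcal{E}_0 \cap \overline{\mathcal{E}_0} \cap TM_0$ as $dq_M(\widetilde{\chi}_M(\mathfrak{g}_\mathbb{C}))$: using $\mathcal{E}_0 = dq_M(T^{0,1}M)$ from Proposition~\ref{Proposition C.2} and $\ker dq_M = \mathfrak{g}_{M,z} \otimes \mathbb{C}$, a real $\zeta \in TM_0$ lies in $\mathcal{E}_0$ iff a real lift $v$ satisfies $v - w \in \mathfrak{g}_{M,z} \otimes \mathbb{C}$ for some $w \in T^{0,1}M$, which unwinds (via $J\chi_M(a) - \sqrt{-1}\chi_M(a) \in T^{0,1}M$) precisely to $v \in \widetilde{\chi}_M(\mathfrak{g}_\mathbb{C})(z)$, while freeness makes $dq_M$ injective on the $J\chi_M$-directions so that the ranks match. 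Then $\overline{\xi} \in \operatorname{Ann}(\mathcal{F}_0)$ iff $q_M^*\overline{\xi}$ annihilates $\widetilde{\chi}_M(\mathfrak{g}_\mathbb{C})$, i.e. $\xi \in \nu^{-1}(0)$; as the $G$-orbit tangent lies inside $\widetilde{\chi}_M(\mathfrak{g}_\mathbb{C})$, this image is exactly $\nu^{-1}(0)$, so $C_{\operatorname{red}} = \nu^{-1}(0)/G$ is carried isomorphically onto $\operatorname{Ann}(\mathcal{F}_0) = T_{\operatorname{red}}^*M_0$. The main obstacle is this last identification of $\mathcal{F}_0$ with the image of the imaginary orbit directions $J\chi_M(a)$ and the careful $(1,0)$/$(0,1)$ bookkeeping it requires; once that and the conormal description of $\nu^{-1}(0)$ are in hand, both the clean-value and the support statements follow from dimension counts that use freeness of the $(\mathfrak{g}_\mathbb{C}, G)$-action.
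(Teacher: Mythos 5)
Your proposal is correct and follows the same overall skeleton as the paper's proof: reduce $\nu$ to $\nu_{\operatorname{can}}$, derive the formula $\langle \nu_{\operatorname{can}}(\alpha), a \rangle = \alpha(J\chi_M(a)) + \sqrt{-1}\,\alpha(\chi_M(a))$, identify $\nu^{-1}(0)$ as the annihilator of $\widehat{\mathfrak{g}}_M$, note that rank $1$ makes the second neatness condition automatic, and match $\nu^{-1}(0)/G$ with $T_{\operatorname{red}}^*M_0$. The two places you diverge are both minor and both work. For cleanness, the paper invokes Lemma \ref{Lemma 4.5} to get that the rank of $\nu$ at points of $\nu^{-1}(0)$ is $2\dim G$ and then compares with $\dim \nu^{-1}(0) = \dim X - 2\dim G$; you instead observe that $\nu$ is fibrewise linear on $T_z^*M$ with surjective fibrewise derivative (by freeness of the $(\mathfrak{g}_\mathbb{C}, G)$-action), so $0$ is outright a regular value. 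Your version is more elementary and self-contained, and in fact makes explicit something only implicit in the paper's count. For the support, the paper simply cites Proposition \ref{Proposition C.2} (whose short exact sequence $\mathfrak{g}_M \to \widehat{\mathfrak{g}}_M \to q^*\mathcal{F}_0$ is exactly the identification $\mathcal{F}_0 = dq_M(\widehat{\mathfrak{g}}_M)$ you re-derive by hand); citing that appendix result would let you delete the last paragraph's $(1,0)/(0,1)$ bookkeeping entirely. Also note that your explicit verification that $\mu_Z = 0$ is a useful spelling-out of the paper's one-line assertion that $\nu = \nu_{\operatorname{can}}$.
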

\begin{proof}
	Let $I_{\operatorname{can}}, \Theta_{\operatorname{can}}$ be given as in Subsection \ref{Subsection 7.1} and $\nu_{\operatorname{can}}$ be given as in (\ref{Equation 7.2}) for $Z = M$ and $Y = X$. As $\nabla^{L_M}$ restrict to the infinitesimal $\mathfrak{g}$-action on $L_M$, $\nu = \nu_{\operatorname{can}}$. We claim that $\nu_{\operatorname{can}}^{-1}(0)$ is the conormal bundle of $\widehat{\mathfrak{g}}_M$, where $\widehat{\mathfrak{g}}_M$ is the image of the map $M \times \mathfrak{g}_\mathbb{C} \to TM$ given by $(x, a) \mapsto \widetilde{\chi}_M(a)(x)$ (see Appendix \ref{Appendix C}). Let us first compute $\nu_{\operatorname{can}}(\alpha)$ for any $x \in M$ and $\alpha \in T_x^*M$. Observe that for all $v \in T_\alpha X$, $\theta(v) = \alpha(d\pi(v))$ and $\Theta_{\operatorname{can}}(v) = \theta(I_{\operatorname{can}}(v)) + \sqrt{-1} \theta(v)$. Let $\chi_M$ be the induced infinitesimal $\mathfrak{g}$-action on $M$. By (\ref{Equation 7.2}), for all $a \in \mathfrak{g}$,
	\begin{equation}
		\label{Equation 7.3}
		\langle \nu_{\operatorname{can}}(\alpha), a \rangle = \alpha(J\chi_M(a)(x)) + \sqrt{-1} \alpha(\chi_M(a)(x)).
	\end{equation}
	It implies that $\nu_{\operatorname{can}}(\alpha) = 0$ if and only if $\alpha$ annihilates the fibre of $\widehat{\mathfrak{g}}_M$ over $x$. Therefore, our claim holds. Note that $(\mathfrak{g}_\mathbb{C}, G)$ also acts on $X$ freely. Fix $\alpha \in \nu_{\operatorname{can}}^{-1}(0)$. By Lemma \ref{Lemma 4.5}, the rank of $\nu_{\operatorname{can}}$ at $\alpha$ is equal to $2\dim G$. Also, we know that $\dim \nu_{\operatorname{can}}^{-1}(0) = \dim X - 2 \dim G$. Therefore, $0 \in \mathfrak{g}^* \otimes \mathbb{C}$ is a clean value of $\nu$. As $\mathcal{B}_{\operatorname{cc}}$ is of rank $1$, the moment section of $\mathcal{B}_{\operatorname{cc}}$ is then neat. Now, we can see from Proposition \ref{Proposition C.2} that $X_{\operatorname{red}} = \nu_{\operatorname{can}}^{-1}(0) / G \cong T_{\operatorname{quo}}^*M_0$.
\end{proof}

\begin{proposition}
	\label{Proposition 7.7}
	$\mathcal{B}_{\operatorname{red}}, (\mathcal{B}_{\operatorname{cc}})_{\operatorname{red}}$ have a clean intersection $M_0 := M/G$, and
	\begin{equation*}
		\underline{\operatorname{Hom}}(\mathcal{B}_{\operatorname{red}}, (\mathcal{B}_{\operatorname{cc}})_{\operatorname{red}}) = (L_{M_0}, \nabla^{L_{M_0}}).
	\end{equation*}
Also, $\mathcal{E}_0$ is the unique adapted transverse holomorphic structure on $M_0$.
\end{proposition}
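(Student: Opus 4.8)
The plan is to read off the first two assertions from Proposition \ref{Proposition 6.1} applied to the pair $(\mathcal{B}, \mathcal{B}_{\operatorname{cc}})$, and to settle the last assertion via Remark \ref{Remark 5.3} after identifying the intersection of the two reduced transverse holomorphic structures with $\mathcal{E}_0$. First I would record the moment traces. Since $\mathcal{B} = \mathcal{B}_{M, M \times \mathbb{C}}^{\operatorname{L}}$ is the trivial flat brane on the zero section, $\mu \vert_M = 0$ and its moment trace vanishes identically, so $\nu_{\mathcal{B}}^{-1}(0) = M$; by Proposition \ref{Proposition 7.6} the moment trace of $\mathcal{B}_{\operatorname{cc}}$ is $\nu_{\operatorname{can}}$ and $\nu_{\operatorname{can}}^{-1}(0)$ is the conormal bundle of $\widehat{\mathfrak{g}}_M$. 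As the zero section $M$ lies inside this conormal bundle, $S^0 = \eta^{-1}(0) = \nu_{\mathcal{B}}^{-1}(0) \cap \nu_{\operatorname{can}}^{-1}(0) = M$, and these two loci meet cleanly in $\mu^{-1}(0)$ because one is contained in the other. Thus condition (\ref{Proposition 6.1 (2)}) of Proposition \ref{Proposition 6.1} holds, so by its equivalence with (\ref{Proposition 6.1 (3)}) the supports $C_{\operatorname{red}} = M_0$ and $C'_{\operatorname{red}} = T_{\operatorname{red}}^*M_0$ intersect cleanly along $S_{\operatorname{red}} = M_0$, and the conclusion of Proposition \ref{Proposition 6.1} identifies $\underline{\operatorname{Hom}}(\mathcal{B}_{\operatorname{red}}, (\mathcal{B}_{\operatorname{cc}})_{\operatorname{red}})$ with the $G$-descent of $\underline{\operatorname{Hom}}(\mathcal{B}, \mathcal{B}_{\operatorname{cc}}) \vert_{S^0} = (L_M, \nabla^{L_M})$, which is $(L_{M_0}, \nabla^{L_{M_0}})$ by Proposition \ref{Proposition 4.8}. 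This disposes of the first two assertions.

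For the last assertion, since $\mathcal{E}_0$ is already known to be a transverse holomorphic structure on $M_0$, Remark \ref{Remark 5.3} reduces everything to proving the single equality $\mathcal{E}_{\mathcal{B}_{\operatorname{red}}} \cap \mathcal{E}_{(\mathcal{B}_{\operatorname{cc}})_{\operatorname{red}}} = \mathcal{E}_0$ along $M_0$. Here $\mathcal{B}_{\operatorname{red}}$ is the Lagrangian zero-section brane, so $\mathcal{E}_{\mathcal{B}_{\operatorname{red}}} = TM_0 \otimes \mathbb{C}$. I would compute the intersection by pulling back along $q_M : M \to M_0$. Applying Lemma \ref{Lemma 4.6} and Remark \ref{Remark 4.7} to $\mathcal{B}_{\operatorname{cc}}$ along $M \subset \nu_{\operatorname{can}}^{-1}(0)$ exhibits $q_M^* \mathcal{E}_{(\mathcal{B}_{\operatorname{cc}})_{\operatorname{red}}}$ as $\mathcal{E}^0$ modulo $\mathfrak{g}_M \otimes \mathbb{C}$, where $\mathcal{E}^0 = \mathcal{F}^0 \otimes \mathbb{C} + T^{0,1}X \cap (T(\nu_{\operatorname{can}}^{-1}(0)) \otimes \mathbb{C})$.

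The geometric input is that the zero section is a complex submanifold of $(X, I)$ with $I \vert_{TM} = J$, so that along it $\widehat{\mathfrak{g}}_X = \mathfrak{g}_M + J\mathfrak{g}_M = \widehat{\mathfrak{g}}_M$ and $T^{0,1}X \cap (TM \otimes \mathbb{C}) = T^{0,1}M$. Because $\mathfrak{g}_M \otimes \mathbb{C}$ is contained both in $TM \otimes \mathbb{C}$ and in $\mathcal{E}^0$, the modular law lets me intersect before quotienting and gives
\begin{equation*}
	q_M^*\bigl( \mathcal{E}_{\mathcal{B}_{\operatorname{red}}} \cap \mathcal{E}_{(\mathcal{B}_{\operatorname{cc}})_{\operatorname{red}}} \bigr) = \bigl( \widehat{\mathfrak{g}}_M \otimes \mathbb{C} + T^{0,1}M \bigr) \big/ (\mathfrak{g}_M \otimes \mathbb{C}).
\end{equation*}
Decomposing $v \in \mathfrak{g}_M$ into its $(1,0)$ and $(0,1)$ parts shows $J\mathfrak{g}_M \subset \mathfrak{g}_M \otimes \mathbb{C} + T^{0,1}M$, so the numerator collapses to $\mathfrak{g}_M \otimes \mathbb{C} + T^{0,1}M$; since $dq_M$ carries $T^{0,1}M$ isomorphically onto $q_M^*\mathcal{E}_0$ with kernel $\mathfrak{g}_M \otimes \mathbb{C}$, the right-hand side is exactly $q_M^*\mathcal{E}_0$. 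Hence $\mathcal{E}_{\mathcal{B}_{\operatorname{red}}} \cap \mathcal{E}_{(\mathcal{B}_{\operatorname{cc}})_{\operatorname{red}}} = \mathcal{E}_0$, and Remark \ref{Remark 5.3} concludes that $\mathcal{E}_0$ is the unique adapted transverse holomorphic structure.

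The main obstacle is precisely this last step. Proposition \ref{Proposition 6.1} guarantees that supports and $\underline{\operatorname{Hom}}$-bundles commute with brane reduction, but it is silent about the intersection of the transverse holomorphic structures, and what I really need is that forming $\mathcal{E} \cap \mathcal{E}'$ also commutes with reduction. This commutation is not automatic — intersection and quotient need not interchange — and it succeeds here only because the orbit directions $\mathfrak{g}_M \otimes \mathbb{C}$ are common to both bundles, which is what makes the modular law applicable. The delicate point to verify carefully is therefore the identity $\widehat{\mathfrak{g}}_X = \widehat{\mathfrak{g}}_M$ along the zero section, ensuring that the reduced vertical directions of $(\mathcal{B}_{\operatorname{cc}})_{\operatorname{red}}$ contribute nothing to the intersection with $TM_0 \otimes \mathbb{C}$.
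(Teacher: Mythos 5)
Your proof is correct and follows essentially the same route as the paper: Proposition \ref{Proposition 6.1} for the clean intersection and the identification of $\underline{\operatorname{Hom}}(\mathcal{B}_{\operatorname{red}}, (\mathcal{B}_{\operatorname{cc}})_{\operatorname{red}})$, then Remark \ref{Remark 5.3} after proving $(TM_0 \otimes \mathbb{C}) \cap \mathcal{E}_{\operatorname{red}} = \mathcal{E}_0$ using Lemma \ref{Lemma 4.6}, Remark \ref{Remark 4.7}, and the fact that the zero section is a complex submanifold of $(X, I_{\operatorname{can}})$ with $\widehat{\mathfrak{g}}_X \vert_M = \widehat{\mathfrak{g}}_M$. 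The only cosmetic difference is that you package the final linear algebra via the modular law for subspaces containing the common kernel $\mathfrak{g}_M \otimes \mathbb{C}$, whereas the paper runs a two-directional element chase with explicit lifts.
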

\begin{proof}
	We can easily see from Proposition \ref{Proposition 7.6} that $M_0$ and $X_{\operatorname{red}}$ have a clean intersection $M_0$. By Proposition \ref{Proposition 6.1}, $\underline{\operatorname{Hom}}(\mathcal{B}_{\operatorname{red}}, (\mathcal{B}_{\operatorname{cc}})_{\operatorname{red}}) = (L_{M_0}, \nabla^{L_{M_0}})$. Evidently, the transverse holomorphic structure associated with $\mathcal{B}_{\operatorname{red}}$ is $TM_0 \otimes \mathbb{C}$. Now, we claim that $(TM_0 \otimes \mathbb{C}) \cap \mathcal{E}_{\operatorname{red}} = \mathcal{E}_0$, where $\mathcal{E}_{\operatorname{red}}$ is the transverse holomorphic structure associated with $(\mathcal{B}_{\operatorname{cc}})_{\operatorname{red}}$. If our claim holds, then by Remark \ref{Remark 5.3}, $\mathcal{E}_0$ is the unique adapted transverse holomorphic structure on $M_0$.\par
	Let $X^0 = \nu^{-1}(0)$. Denote by $(\rho, \widetilde{\chi})$ the induced $(\mathfrak{g}_\mathbb{C}, G)$-action on the complex manifold $(X, I_{\operatorname{can}})$. By Lemma \ref{Lemma 4.6} and Remark \ref{Remark 4.7}, the image $\mathfrak{g}_{X^0}$ of $X^0 \times \mathfrak{g}$ (resp. $\widehat{\mathfrak{g}}_{X^0}$ of $X^0 \times \mathfrak{g}_\mathbb{C}$) under the map $X \times \mathfrak{g}_\mathbb{C} \to TX$ given by $(x, a) \mapsto \widetilde{\chi}(a)(x)$ lies in $TX^0$, and we have a short exact sequence of complex vector bundles over $X^0$:
	\begin{center}
		\begin{tikzcd}
			\mathfrak{g}_{X^0} \otimes \mathbb{C} \ar[r] & \widehat{\mathfrak{g}}_{X^0} \otimes \mathbb{C} + T^{0, 1}X^0 \ar[r, "dq"] & q^*\mathcal{E}_{\operatorname{red}}
		\end{tikzcd}
	\end{center}
	Here, $q: X^0 \to X_{\operatorname{red}}$ is the quotient map and $T^{0, 1}X^0$ denotes the antiholomorphic tangent bundle of $X^0$ as a complex submanifold of $(X, I_{\operatorname{can}})$. On the other hand, by Proposition \ref{Proposition C.2}, the map $dq_M \vert_{T^{0, 1}M}: T^{0, 1}M \to q_M^*\mathcal{E}_0$ is a complex vector bundle isomorphism. Note that $q_M$ is the restriction of $q$ on the zero section $M$. Now, fix $x \in M$ and let $y = q(x) \in M_0$.\par
	Suppose that $v \in \mathcal{E}_{0, y}$. Evidently, $v \in T_yM_0 \otimes \mathbb{C}$. Pick $u \in T_x^{0, 1}M$ such that $dq(u) = v$. As the zero section $M$ is a complex submanifold of $X^0$, $u \in T_x^{0, 1}X^0$ and hence $v = dq(u) \in \mathcal{E}_{\operatorname{red}, y}$.\par
	Conversely, suppose that $v \in (T_yM_0 \otimes \mathbb{C}) \cap \mathcal{E}_{\operatorname{red}, y}$. Pick $u \in T_xM \otimes \mathbb{C}$ such that $dq(u) = v$. Also, pick $w \in T_x^{0, 1}X^0$ and $w', w'' \in \mathfrak{g}_{(x)} \otimes \mathbb{C}$ such that $dq(w + w' + Iw'') = v$. Note that $dq(w') = dq(w'') = 0$ and $-\sqrt{-1}w'' + Iw'' \in T_x^{0, 1}X^0$. Now we replace $w -\sqrt{-1}w'' + Iw''$ by $w$ and see that $dq(w) = v$. Then $dq(u - w) = 0$, whence $u - w \in \mathfrak{g}_{(x)} \otimes \mathbb{C} \subset T_xM \otimes \mathbb{C}$. It forces that $w \in (T_xM \otimes \mathbb{C}) \cap T_x^{0, 1}X^0 = T_x^{0, 1}M$ and therefore $v = dq(w) \in \mathcal{E}_{\operatorname{red}, y}$. Our claim holds.
\end{proof}

In conclusion, Proposition \ref{Proposition 7.5} can be reformulated as
\begin{equation*}
	H_{\mathcal{E}_0}^*(M_0, \underline{\operatorname{Hom}}(\mathcal{B}_{\operatorname{red}}, (\mathcal{B}_{\operatorname{cc}})_{\operatorname{red}})) \cong H_{T^{0, 1}M}^*(M, \underline{\operatorname{Hom}}(\mathcal{B}, \mathcal{B}_{\operatorname{cc}}))^G.
\end{equation*}

\subsection{Guillemin-Sternberg theorem in terms of intersections of brane reductions}
\label{Subsection 7.4}
\quad\par
Consider the following situation:
\begin{itemize}
	\item Let $(M, \omega_M)$ be a compact connected K\"ahler manifold acted by $G$ holomorphically.
	\item 
	Let $(L_M, \nabla^{L_M})$ be a $G$-equivariant prequantum line bundle of $(M, \omega_M)$. In particular, $F^{\nabla^{L_M}} = -\sqrt{-1}\omega_M$ and, as discussed in  Subsection \ref{Subsection 3.2}, the $G$-action on $(M, \omega_M)$ is Hamiltonian with a moment map $\mu_M$ induced by the $G$-equivariant structure on $(L_M, \nabla^{L_M})$. Assume that $G$ acts on $\mu_M^{-1}(0)$ freely.
\end{itemize}
Then we obtain the K\"ahler reduction $M \sslash G = \mu_M^{-1}(0)/G$, and $(L_M \vert_{\mu_M^{-1}(0)}, \nabla^{L_M} \vert_{\mu_M^{-1}(0)})$ descends to a Hermitian holomorphic line bundle $L_{M \sslash G}$ equipped with the Chern connection $\nabla^{L_{M \sslash G}}$.\par
Here, our goal is to reformulate the following theorem in terms of sheaf cohomologies (\ref{Equation 5.2}) on intersections of A-branes by considering the $G$-invariant A-branes $\mathcal{B} = \mathcal{B}_{M, M \times \mathbb{C}}^{\operatorname{L}}$ and $\mathcal{B}_{\operatorname{cc}} = \mathcal{B}_{M, L_M}^{\operatorname{C}}$.\par

\begin{theorem}[\cite{Bra2001, Tel2000}]
	\label{Theorem 7.8}
	There is a $\mathbb{C}$-linear isomorphism
	\begin{equation*}
		H_{\overline{\partial}}^{0, *}(M \sslash G, L_{M \sslash G}) \cong H_{\overline{\partial}}^{0, *}(M, L_M)^G.
	\end{equation*}
\end{theorem}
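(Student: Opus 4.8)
The plan is to identify the Kähler reduction $M \sslash G$ with a holomorphic (GIT) quotient, descend the Dolbeault complex along a holomorphic principal bundle, and isolate the genuinely hard input as a vanishing statement over the unstable locus. First I would complexify the action: since $M$ is compact, the holomorphic vector fields $\chi_M(a) + J\chi_M(b)$ (for $a, b \in \mathfrak{g}$) are complete, so the holomorphic $G$-action integrates to a holomorphic $G_\mathbb{C}$-action, and the $G$-equivariant holomorphic structure on $L_M$ complexifies to a $G_\mathbb{C}$-linearization. Let $M^s := G_\mathbb{C} \cdot \mu_M^{-1}(0)$ be the open set of stable points. By the Kempf-Ness theorem, the inclusion $\mu_M^{-1}(0) \hookrightarrow M^s$ induces a homeomorphism $M \sslash G = \mu_M^{-1}(0)/G \cong M^s / G_\mathbb{C}$ that is biholomorphic for the induced complex structures, and under the freeness hypothesis $G_\mathbb{C}$ acts freely on $M^s$. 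Thus $p \colon M^s \to M \sslash G$ is a holomorphic principal $G_\mathbb{C}$-bundle with $L_M \vert_{M^s} \cong p^* L_{M \sslash G}$ as $G_\mathbb{C}$-equivariant holomorphic line bundles.

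Next I would establish the descent isomorphism $H_{\overline{\partial}}^{0, *}(M^s, L_M)^G \cong H_{\overline{\partial}}^{0, *}(M \sslash G, L_{M \sslash G})$. The fibre $G_\mathbb{C}$ is a Stein manifold, so its higher Dolbeault cohomology vanishes, while its holomorphic functions carry the regular representation, whose $G_\mathbb{C}$-invariants are the constants; feeding this into the Leray spectral sequence of $p$ (equivalently, averaging over the compact group $G$ and passing to basic representatives, using that the fibre contributes only constants) yields the identification. This is the same descent mechanism underlying Proposition \ref{Proposition 7.5}, now applied to the free holomorphic $G_\mathbb{C}$-action on $M^s$, and I expect it to be routine once the principal bundle structure is in place.

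The main obstacle is the comparison $H_{\overline{\partial}}^{0, *}(M, L_M)^G \cong H_{\overline{\partial}}^{0, *}(M^s, L_M)^G$, i.e. showing that the unstable locus $M \setminus M^s$ contributes nothing to invariant Dolbeault cohomology. In degree $0$ this is the classical Guillemin-Sternberg statement \cite{GuiSte1982}: a $G$-invariant holomorphic section of $L_M$ is automatically $G_\mathbb{C}$-invariant, hence determined by its restriction to $M^s$. In higher degrees one must control the Kirwan-Hesselink stratification $M \setminus M^s = \bigsqcup_\beta S_\beta$ by the negative gradient flow of $\lvert \mu_M \rvert^2$ \cite{Kir1984}, where each $S_\beta$ is a $G$-invariant, complex, locally closed stratum. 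The crucial point is that the linearization on $S_\beta$ carries a strictly positive weight along the distinguished subgroup $\exp(t\beta)$, forcing the $G$-invariant local cohomology supported along $S_\beta$ to vanish; assembling these vanishings through the long exact sequences of the pairs produces the isomorphism. This weight-positivity vanishing is precisely the technical heart of the refinement \cite{Bra2001, Tel2000}, and I expect it to be the hard step. An alternative route that sidesteps the stratification is Braverman's Witten-type deformation of $\overline{\partial}$ by the moment map, which localizes the invariant cohomology onto $\mu_M^{-1}(0)$ and matches it with the reduction directly. Chaining either comparison with the descent isomorphism of the previous paragraph yields $H_{\overline{\partial}}^{0, *}(M \sslash G, L_{M \sslash G}) \cong H_{\overline{\partial}}^{0, *}(M, L_M)^G$.
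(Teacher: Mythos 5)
You should first note that the paper does not actually prove Theorem \ref{Theorem 7.8}: it is imported verbatim from \cite{Bra2001, Tel2000}, and the authors' contribution in Subsection \ref{Subsection 7.4} is only to reinterpret the two sides as $H_{T^{0,1}M_0}^*(M_0, \underline{\operatorname{Hom}}((\mathcal{B}')_{\operatorname{red}}, (\mathcal{B}'_{\operatorname{cc}})_{\operatorname{red}}))$ and $H_{T^{0,1}M}^*(M, \underline{\operatorname{Hom}}(\mathcal{B}, \mathcal{B}_{\operatorname{cc}}))^G$. So there is no internal proof to compare against; what you have written is a reconstruction of the argument in the cited references. As such a reconstruction it is faithful: the factorization into (i) complexification and identification $M \sslash G \cong M^{s}/G_{\mathbb{C}}$ via Kempf--Ness, (ii) descent of invariant Dolbeault cohomology along the holomorphic principal $G_{\mathbb{C}}$-bundle $M^{s} \to M \sslash G$ (where the Stein fibre contributes only constants to the invariants), and (iii) the comparison $H_{\overline{\partial}}^{0,*}(M, L_M)^G \cong H_{\overline{\partial}}^{0,*}(M^{s}, L_M)^G$ via the Kirwan--Hesselink stratification and positivity of the weights of the linearization on the unstable strata, is exactly the architecture of Teleman's proof, and your alternative via Witten-type deformation of $\overline{\partial}$ by $|\mu_M|^2$ is Braverman's. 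You also correctly locate the hard content in step (iii).

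The caveat is that, taken as a self-contained proof, your proposal still has a genuine gap at precisely the step you flag: the assertion that strictly positive weight along $\exp(t\beta)$ forces the invariant local cohomology supported on $S_\beta$ to vanish is stated, not proved, and it is not a routine consequence of anything earlier in your argument (one needs the normal-bundle/Koszul computation of the local cohomology of $\mathcal{O}(L_M)$ along each stratum together with a careful analysis of the induced torus weights, or alternatively the full analytic estimates behind the deformation). Two smaller points deserve care if you were to flesh this out: the Leray/descent step in (ii) requires justifying that the quotient map is a Stein morphism so that higher direct images vanish before taking invariants (equivalently, that $G$-invariance of a Dolbeault class already implies $G_{\mathbb{C}}$-invariance, which follows from complex-linearity of the induced $\mathfrak{g}_{\mathbb{C}}$-action on cohomology); and the identification of $L_{M\sslash G}$ with the descent of $L_M\vert_{M^s}$ must be checked to be compatible with the Chern connection descended from $\mu_M^{-1}(0)$, which is how the paper defines $\nabla^{L_{M\sslash G}}$.
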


The above theorem refines Guillemin-Sternberg `quantization commutes with reduction' theorem \cite{GuiSte1982} to the level of individual cohomology groups. Related studies can be found in \cite{LeuWan2023, Mei1998, MouNunPerWan2024, TiaZha1998, Wan2024}.\par
To achieve our goal, we need to study the behaviour of the intersection of $\mathcal{B}_{\operatorname{red}}$ and $(\mathcal{B}_{\operatorname{cc}})_{\operatorname{red}}$, given that those brane reductions are well defined. Unfortunately, without the condition that $G$ acts on $M$ freely, we cannot obtain $X \sslash G$ as a smooth symplectic manifold in general. There are different approaches to describe $X \sslash G$ when it is singular, for instance stratified symplectic spaces \cite{LerMonSja1993, LerSja1991} and symplectic derived stacks \cite{AneCal2022, Ben2015, PatToeVaqVez2013}. For our purpose, in order to avoid unnecessary technicality caused by formulating the notion of A-branes on a `singular symplectic space', we make the following observations instead.\par
$\mathcal{B}$ and $\mathcal{B}_{\operatorname{cc}}$ have a transverse intersection $M$. We see that $\eta^{-1}(0) = \mu_M^{-1}(0)$, where $\eta$ is the map in (\ref{Equation 6.1}). Since $0 \in \mathfrak{g}^*$ is a regular value of $\nu_M$, $0 \in \mathfrak{g}^* \otimes \mathbb{R}^3$ is a clean value of $\eta$. Proposition \ref{Proposition 6.1} suggests that we should consider the smooth manifold $\eta^{-1}(0) / G = M \sslash G$ as `the intersection of the singular brane reductions of $\mathcal{B}$ and $\mathcal{B}_{\operatorname{cc}}$'.\par
A more precise way to describe $M \sslash G$ as the intersection of A-branes is as follows. We know that the holomorphic $G$-action on $M$ extends to $G_\mathbb{C}$-action on $M$. We take the stable locus $M'$ of $M$, which is open and dense in $M$. Hence,
\begin{equation*}
	X' := T^*M'
\end{equation*}
is an open dense subset of $X$ containing $\eta^{-1}(0)$ and $G$ acts on $X'$ freely. We can then consider the $G$-invariant A-branes $\mathcal{B}' = \mathcal{B}_{M', M' \times \mathbb{C}}^{\operatorname{L}}$ and $\mathcal{B}'_{\operatorname{cc}} = \mathcal{B}_{M', L_{M'}}^{\operatorname{C}}$ on $(X', \omega \vert_{X'})$. Here, $(L_{M'}, \nabla^{L_{M'}})$ is the restriction of $(L_M, \nabla^{L_M})$ onto $M'$. The supports of $\mathcal{B}'$ and $\mathcal{B}'_{\operatorname{cc}}$ are $M'$ and $T^*M'$ respectively, and they have a transverse intersection $M'$, which is a neighbourhood of $\eta^{-1}(0)$ in $M$.\par
Note that $X' \sslash G = T^*M_0$, where $M_0 = M' / G$, and $\mathcal{B}'_{\operatorname{red}} = \mathcal{B}_{M_0, M_0 \times \mathbb{C}}^{\operatorname{L}}$. The fact that the $G_\mathbb{C}$-action on $M'$ is locally free implies that $(\mathfrak{g}_\mathbb{C}, G)$ acts on $M'$ freely in the sense of Definition \ref{Definition C.1}. By Proposition \ref{Proposition C.2}, $M_0$ admits a unique transverse holomorphic structure $\mathcal{E}_0$ such that $dq_M: T^{0, 1}M' \to q_M^*\mathcal{E}_0$ is a complex vector bundle isomorphism, where $q_M: M' \to M_0$ is the quotient map. Let $\mathcal{F}_0 = \mathcal{E}_0 \cap \overline{\mathcal{E}_0} \cap TM_0$ and $T_{\operatorname{quo}}M_0 = TM_0/\mathcal{F}_0$.

\begin{proposition}
	The moment section of $\mathcal{B}'_{\operatorname{cc}}$ is neat. Also, the support of $(\mathcal{B}'_{\operatorname{cc}})_{\operatorname{red}}$ is an affine subbundle of $T^*M_0$ modelled by the vector bundle $T_{\operatorname{quo}}^*M_0$.
\end{proposition}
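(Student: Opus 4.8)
The plan is to follow the proof of Proposition \ref{Proposition 7.6}, tracking the single new feature: the prequantum moment map $\mu_M$ now contributes a nontrivial inhomogeneous term to the moment trace. By the moment-trace formula of Subsection \ref{Subsection 7.2} applied to $Z = M'$ (so that $C = Y = T^*M'$ and $p = \operatorname{id}$), the moment trace of $\mathcal{B}'_{\operatorname{cc}}$ is $\nu = \nu_{\operatorname{can}} + (\pi \vert_C)^*\mu_M$, with $\nu_{\operatorname{can}}$ as in (\ref{Equation 7.2}). First I would record, using (\ref{Equation 7.3}), that for $x \in M'$, $\alpha \in T_x^*M'$ and $a \in \mathfrak{g}$,
\begin{equation*}
	\langle \nu(\alpha), a \rangle = \alpha(J\chi_M(a)(x)) + \langle \mu_M(x), a \rangle + \sqrt{-1}\, \alpha(\chi_M(a)(x)).
\end{equation*}
Hence $\nu(\alpha) = 0$ amounts to the homogeneous condition $\alpha \vert_{\mathfrak{g}_{(x)}} = 0$ (from the imaginary part) together with the inhomogeneous condition $\alpha(J\chi_M(a)(x)) = -\langle \mu_M(x), a \rangle$ (from the real part). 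Unlike in Proposition \ref{Proposition 7.6}, this second condition is affine, so fibrewise $\nu^{-1}(0)$ is an affine translate of the fibre of $\nu_{\operatorname{can}}^{-1}(0) = N^*\widehat{\mathfrak{g}}_{M'}$; that is, $\nu^{-1}(0)$ is an affine subbundle of $T^*M'$ over $M'$ modelled on the conormal bundle $N^*\widehat{\mathfrak{g}}_{M'}$.

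For neatness I would argue exactly as in Proposition \ref{Proposition 7.6}. As $M'$ is the stable locus, $G_\mathbb{C}$ acts locally freely on it, so $(\mathfrak{g}_\mathbb{C}, G)$ acts freely on $X' = T^*M'$ and the transverse complex rank of the $G$-action on the space-filling brane $X'$ (for which $T_{\operatorname{red}}X' = TX'$ carries the integrable complex structure $I$) equals $\dim G$ at every point. By Lemma \ref{Lemma 4.5} the rank of $\nu$ is therefore $2\dim G$ on all of $X'$, while the fibrewise affine description gives $\dim \nu^{-1}(0) = \dim X - 2\dim G$. Thus $0 \in \mathfrak{g}^* \otimes \mathbb{C}$ is a clean (in fact regular) value of $\nu$; since $\mathcal{B}'_{\operatorname{cc}}$ has rank $1$, the second neatness condition is automatic and the moment section of $\mathcal{B}'_{\operatorname{cc}}$ is neat.

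Finally, for the support I would pass to the $G$-quotient. Because $\operatorname{Im} \nu = \mu \vert_C$ for the cotangent moment map $\mu$, we have $\nu^{-1}(0) \subset \mu^{-1}(0)$, so $\nu^{-1}(0)/G$ sits inside $\mu^{-1}(0)/G = X' \sslash G = T^*M_0$. The $G$-equivariant affine subbundle $\nu^{-1}(0) \to M'$, modelled on the $G$-equivariant vector bundle $N^*\widehat{\mathfrak{g}}_{M'}$, descends under the free $G$-action to an affine subbundle $\nu^{-1}(0)/G \to M_0$ modelled on the quotient bundle $N^*\widehat{\mathfrak{g}}_{M'}/G$. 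By the identification already carried out in Proposition \ref{Proposition 7.6} (the flat case $\mu_M = 0$, where $\nu_{\operatorname{can}}^{-1}(0)/G \cong T_{\operatorname{red}}^*M_0$), this quotient is precisely $T_{\operatorname{red}}^*M_0$. Hence the support of $(\mathcal{B}'_{\operatorname{cc}})_{\operatorname{red}}$ is an affine subbundle of $T^*M_0$ modelled on $T_{\operatorname{red}}^*M_0$.

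I expect the last step to be the main obstacle: one must verify that the fibrewise affine structure on $\nu^{-1}(0)$ is genuinely $G$-equivariant and descends with modelling bundle $T_{\operatorname{red}}^*M_0$, rather than being merely pointwise affine. This is where the $G$-equivariance of $\mu_M$ (which makes the inhomogeneous condition compatible with the $G$-action) and the identification $N^*\widehat{\mathfrak{g}}_{M'}/G \cong T_{\operatorname{red}}^*M_0$ from Proposition \ref{Proposition 7.6} do the real work; the cleanness count, by contrast, is \emph{identical} to the flat case and poses no new difficulty.
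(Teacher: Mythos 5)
Your proposal is correct and follows essentially the same route as the paper: identify $\nu = \nu_{\operatorname{can}} + \pi^*\mu_M$ as fibrewise affine with linear model $\nu_{\operatorname{can}}^{-1}(0)$, use the free $(\mathfrak{g}_\mathbb{C}, G)$-action to get the rank/dimension count for cleanness, and quote the identification $\nu_{\operatorname{can}}^{-1}(0)/G \cong T_{\operatorname{red}}^*M_0$ from Proposition \ref{Proposition 7.6}. The only point worth making explicit is that the injectivity of $a \mapsto \widetilde{\chi}_M(a)(x)$ is what guarantees each fibre $\nu^{-1}(0) \cap T_x^*M'$ is \emph{non-empty} (the inhomogeneous linear system is solvable), which is exactly the one step the paper spells out before declaring the affine subbundle structure.
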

\begin{proof}
	Recall that $\nu = \nu_{\operatorname{can}} + \pi^*\mu_M$, where $\pi: X \to M$ is the bundle projection and $\nu_{\operatorname{can}}$ is given as in (\ref{Equation 7.2}) for $Z = M$ and $Y = X$. Fix $x \in M'$. We can see that the restriction
	\begin{equation*}
		\nu \vert_{T_x^*M'} = \nu_{\operatorname{can}} \vert_{T_x^*M'} + \mu_M(x): T_x^*M' \to \mathfrak{g}_\mathbb{C}
	\end{equation*}
	is an affine map. We claim that $\nu^{-1}(0) \cap T_x^*M' = \nu_{\operatorname{can}}^{-1}(-\mu_M(x)) \cap T_x^*M'$ is non-empty. As $(\mathfrak{g}_\mathbb{C}, G)$ acts on $M'$ freely, the linear map $\mathfrak{g}_\mathbb{C} \to T_xM'$ given by $a \mapsto \widetilde{\chi}_M(a)(x)$ is injective. Thus, there exists $\alpha \in T_x^*M'$ such that for all $a \in \mathfrak{g}$,
	\begin{equation*}
		\alpha(J\chi_M(a)(x)) = -\langle \mu_M(x), a \rangle \quad \text{and} \quad \alpha(\chi_M(a)(x)) = 0,
	\end{equation*}
	where $\chi_M$ is the infinitesimal $\mathfrak{g}$-action on $M$. This implies that $\alpha \in \nu^{-1}(0) \cap T_x^*M'$ by the formula (\ref{Equation 7.3}). Our claim holds. Thus, $\nu^{-1}(0) \cap X'$ is an affine subbundle of $T^*M'$ modelled by $\nu_{\operatorname{can}}^{-1}(0) \cap X'$.\par
	Similar to the proof of Proposition \ref{Proposition 7.6}, by counting the dimension of $\nu^{-1}(0) \cap X'$ and the rank of $L_{M'}$, we see that the moment section of $\mathcal{B}'_{\operatorname{cc}}$ is neat. Again, we know from the same proof that $(\nu_{\operatorname{can}}^{-1}(0) \cap X') / G \cong T_{\operatorname{quo}}^*M_0$. We are done.
\end{proof}

From the construction of $\mathcal{B}'_{\operatorname{cc}} = \mathcal{B}_{M', L_{M'}}^{\operatorname{C}}$ in Proposition \ref{Proposition 7.1}, we can see that the restriction of the Hermitian line bundle associated with $\mathcal{B}'_{\operatorname{cc}}$ onto $M'$ is $(L_{M'}, \nabla^{L_{M'}})$. On the other hand, $\mathcal{B}' = \mathcal{B}_{M', M' \times \mathbb{C}}^{\operatorname{L}}$ is equipped with the trivial flat Hermitian line bundle. Thus,
\begin{equation*}
	\underline{\operatorname{Hom}}(\mathcal{B}', \mathcal{B}'_{\operatorname{cc}}) = (L_{M'}, \nabla^{L_{M'}}).
\end{equation*}

\begin{proposition}
	$(\mathcal{B}')_{\operatorname{red}}, (\mathcal{B}'_{\operatorname{cc}})_{\operatorname{red}}$ have a clean intersection $M \sslash G$ and
	\begin{equation*}
		\underline{\operatorname{Hom}}((\mathcal{B}')_{\operatorname{red}}, (\mathcal{B}'_{\operatorname{cc}})_{\operatorname{red}}) = (L_{M \sslash G}, \nabla^{L_{M \sslash G}}).
	\end{equation*}
	Moreover, the underlying complex structure on the K\"ahler manifold $M \sslash G$ is adapted.
\end{proposition}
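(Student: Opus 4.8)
The plan is to obtain the first two assertions from Proposition \ref{Proposition 6.1} applied to $\mathcal{B}'$ and $\mathcal{B}'_{\operatorname{cc}}$ on $X'$, and then to verify the three conditions defining adaptedness for the K\"ahler complex structure on $M \sslash G$, the crux being that $\mathcal{E} \cap \mathcal{E}'$ vanishes over the intersection. First I would check the hypotheses of Proposition \ref{Proposition 6.1}: the moment section of $\mathcal{B}'_{\operatorname{cc}}$ is neat by the previous proposition, while that of the Lagrangian brane $\mathcal{B}'$ is neat because its moment trace vanishes identically (so $0$ is a clean value) and $\mathcal{B}'$ has rank $1$; and the supports $M'$, $T^*M'$ meet transversally along the zero section $M'$. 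Since $0 \in \mathfrak{g}^* \otimes \mathbb{R}^3$ is a clean value of $\eta$ and $\eta^{-1}(0) = \mu_M^{-1}(0)$, Proposition \ref{Proposition 6.1} yields that $(\mathcal{B}')_{\operatorname{red}}$ and $(\mathcal{B}'_{\operatorname{cc}})_{\operatorname{red}}$ intersect cleanly with $S_{\operatorname{red}} = \mu_M^{-1}(0)/G = M \sslash G$, and that $\underline{\operatorname{Hom}}(\mathcal{B}', \mathcal{B}'_{\operatorname{cc}}) = (L_{M'}, \nabla^{L_{M'}})$, restricted to $\mu_M^{-1}(0)$, descends to $\underline{\operatorname{Hom}}((\mathcal{B}')_{\operatorname{red}}, (\mathcal{B}'_{\operatorname{cc}})_{\operatorname{red}})$.

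To complete the second assertion I would identify this descent with $(L_{M \sslash G}, \nabla^{L_{M \sslash G}})$. The underlying Hermitian line bundles agree because $L_{M \sslash G}$ is by construction the descent of $(L_M, \nabla^{L_M}) \vert_{\mu_M^{-1}(0)}$; it then remains to match the descended unitary connection with the Chern connection $\nabla^{L_{M \sslash G}}$. This is the standard compatibility of the Chern connection with K\"ahler reduction: the $(0,1)$-part of $\nabla^{L_M}$ descends to the operator defining the holomorphic structure of $L_{M \sslash G}$, so the descended unitary connection is the unique one compatible with that structure, i.e. the Chern connection.

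For the last assertion I set $\mathcal{E}_S := T^{0,1}(M \sslash G)$, a transverse holomorphic structure with $\mathcal{E}_S \cap \overline{\mathcal{E}_S} \cap TS = 0$, and verify the definition of adaptedness. Condition (2) is immediate, since the curvature of $\underline{\operatorname{Hom}}((\mathcal{B}')_{\operatorname{red}}, (\mathcal{B}'_{\operatorname{cc}})_{\operatorname{red}}) = (L_{M \sslash G}, \nabla^{L_{M \sslash G}})$ is $-\sqrt{-1}\, \omega_{M \sslash G}$, a form of type $(1,1)$, which vanishes on $\mathcal{E}_S \times \mathcal{E}_S$. For condition (1) it suffices to prove the single statement $\mathcal{E} \cap \mathcal{E}' = 0$ over $S$: this gives $\mathcal{E} \cap \mathcal{E}' \subset \mathcal{E}_S$ trivially, and by Proposition \ref{Proposition 5.1} it forces $(\mathcal{F} \cap \mathcal{F}') \otimes \mathbb{C} = \mathcal{E} \cap \mathcal{E}' \cap \overline{\mathcal{E} \cap \mathcal{E}'} = 0$, so that $\mathcal{F} \cap \mathcal{F}' = 0 = \mathcal{E}_S \cap \overline{\mathcal{E}_S} \cap TS$.

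The heart of the argument is therefore $\mathcal{E} \cap \mathcal{E}' = 0$. Here $\mathcal{E} = TM_0 \otimes \mathbb{C}$ is the transverse holomorphic structure of the Lagrangian brane $(\mathcal{B}')_{\operatorname{red}}$ and $\mathcal{E}' = \ker \Omega_{\operatorname{red}}$ that of $(\mathcal{B}'_{\operatorname{cc}})_{\operatorname{red}}$, where $\Omega_{\operatorname{red}} = \operatorname{Re} \Omega_{\operatorname{red}} + \sqrt{-1}\, \omega_{\operatorname{red}} \vert_{C'_{\operatorname{red}}}$. Cleanness of the intersection gives $\mathcal{E} \cap \mathcal{E}' \subset TS \otimes \mathbb{C}$, and since $S = M \sslash G$ lies in the zero section $M_0$ of $T^*M_0$ the canonical form $\omega_{\operatorname{red}}$ restricts to $0$ on $TS$, whence $\Omega_{\operatorname{red}} \vert_{TS} = \operatorname{Re} \Omega_{\operatorname{red}} \vert_{TS}$. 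I would then identify $\operatorname{Re} \Omega_{\operatorname{red}} \vert_S$ with the reduced K\"ahler form $\omega_{M \sslash G}$: upstairs $\operatorname{Re} \Omega \vert_{M'} = \omega_{M'}$, and this descends along $\mu_M^{-1}(0) \to M \sslash G$ to $\omega_{M \sslash G}$ by the definition of the K\"ahler quotient. As $\omega_{M \sslash G}$ is nondegenerate, any $v \in \mathcal{E} \cap \mathcal{E}'$ satisfies $\iota_v \Omega_{\operatorname{red}} = 0$, hence $\iota_v\, \omega_{M \sslash G} = \iota_v(\Omega_{\operatorname{red}} \vert_{TS}) = 0$, forcing $v = 0$. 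The main obstacle is exactly this descent bookkeeping --- identifying $\operatorname{Re} \Omega_{\operatorname{red}} \vert_S$ with the reduced K\"ahler form and confirming that $S$ sits in the zero section --- which must be pushed through the affine shift by $\mu_M$ that distinguishes this prequantum setting from the flat case of Subsection \ref{Subsection 7.3}.
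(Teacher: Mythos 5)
Your proposal is correct and follows the same skeleton as the paper's proof: invoke Proposition \ref{Proposition 6.1} (after checking that $0$ is a clean value of $\eta$ via $\eta^{-1}(0)=\mu_M^{-1}(0)$) to get the clean intersection $M\sslash G$ and the identification of the $\underline{\operatorname{Hom}}$-bundle, then reduce adaptedness to showing that the two transverse holomorphic structures on $T^*M_0$ intersect trivially over $M\sslash G$. Where you genuinely diverge is in how you establish that central claim $(TM_0\otimes\mathbb{C})\cap\mathcal{E}_{\operatorname{red}}=0$. The paper argues \emph{upstairs}: it declares the proof ``similar to that in Proposition \ref{Proposition 7.7}'', i.e.\ a diagram chase through the short exact sequence of Remark \ref{Remark 4.7} on $X^0=\nu^{-1}(0)$, with the decisive input that $M'$ is totally real in $(X',I_{\operatorname{can}})$ so that $(TM'\otimes\mathbb{C})\cap T^{0,1}X^0=0$; it then omits the details. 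You instead argue entirely \emph{downstairs}: cleanness puts $\mathcal{E}\cap\mathcal{E}_{\operatorname{red}}$ inside $TS\otimes\mathbb{C}$, the zero section is $\omega_{\operatorname{red}}$-Lagrangian so $\Omega_{\operatorname{red}}\vert_{TS}=\operatorname{Re}\Omega_{\operatorname{red}}\vert_{TS}$, and identifying $\operatorname{Re}\Omega_{\operatorname{red}}\vert_{TS}$ with the reduced K\"ahler form $\omega_{M\sslash G}$ (via $\operatorname{Re}\Omega\vert_{M'}=\omega_{M'}$ and descent along $\mu_M^{-1}(0)\to M\sslash G$) lets nondegeneracy kill any such vector. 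Your route buys a complete, self-contained verification of the step the paper only sketches by analogy, and it isolates exactly the geometric reason the intersection vanishes (nondegeneracy of $\omega_{M\sslash G}$ rather than total reality upstairs); the paper's route has the virtue of reusing the machinery already set up for Proposition \ref{Proposition 7.7} and of not requiring the identification of $\operatorname{Re}\Omega_{\operatorname{red}}\vert_{TS}$ with the reduced K\"ahler form. Your treatment of condition (2) of adaptedness (curvature of $\nabla^{L_{M\sslash G}}$ is of type $(1,1)$) matches the paper's closing remark that $L_{M\sslash G}$ being holomorphic finishes the argument.
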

\begin{proof}
	We have seen that $0$ is a clean value of $\eta$. By Proposition \ref{Proposition 6.1}, $(\mathcal{B}')_{\operatorname{red}}$ and $(\mathcal{B}'_{\operatorname{cc}})_{\operatorname{red}}$ have a clean intersection $\eta^{-1}(0) / G = M \sslash G$, and $\underline{\operatorname{Hom}}(\mathcal{B}'_{\operatorname{red}}, (\mathcal{B}'_{\operatorname{cc}})_{\operatorname{red}}) = (L_{M \sslash G}, \nabla^{L_{M \sslash G}})$. Observe that $TM_0 \otimes \mathbb{C}$ is the transverse holomorphic structure associated with $\mathcal{B}'_{\operatorname{red}}$. Now, let $\mathcal{E}_{\operatorname{red}}$ be the transverse holomorphic structure associated with $(\mathcal{B}'_{\operatorname{cc}})_{\operatorname{red}}$.\par
	We claim that the intersection of $TM_0 \otimes \mathbb{C}$ and $\mathcal{E}_{\operatorname{red}}$ is a zero vector bundle. We omit a proof of this claim as it would be similar to that in Proposition \ref{Proposition 7.7}. But we point out a key difference from Proposition \ref{Proposition 7.7} that, in this case, as $M'$ is a totally real submanifold of $X'$, the intersection of $TM' \otimes \mathbb{C}$ and $T^{0, 1}X^0$ is a zero vector bundle. Eventually, because $L_{M \slash G}$ is a holomorphic line bundle over $M \sslash G$, the complex structure on $M \sslash G$ is adapted.
\end{proof}

In conclusion, Theorem \ref{Theorem 7.8} can be reformulated as

\begin{proposition}
	\label{Proposition 7.11}
	There is a $\mathbb{C}$-linear isomorphism
	\begin{equation*}
		H_{T^{0, 1}M_0}^*(M_0, \underline{\operatorname{Hom}}((\mathcal{B}')_{\operatorname{red}}, (\mathcal{B}'_{\operatorname{cc}})_{\operatorname{red}})) \cong H_{T^{0, 1}M}^*(M, \underline{\operatorname{Hom}}(\mathcal{B}, \mathcal{B}_{\operatorname{cc}}))^G.
	\end{equation*}
\end{proposition}

As we have seen in this subsection, while this paper focuses on reduction of coisotropic A-branes in a \emph{non-singular} sense, singular spaces naturally come into our discussion. In the future, we hope to use a stacky approach to formulate coisotropic A-branes, their intersections, and their brane reductions in a more general context (see a related work \cite{Qin2024}).

\appendix
\section{Linear algebra in transverse holomorphic symplectic geometry}
\label{Appendix A}
We state some basic linear algebras involved in this paper. Proofs are left for readers.\par
Let $V$ be a finite-dimensional $\mathbb{R}$-vector space and $\Omega: V \times V \to \mathbb{C}$ be a skew-symmetric $\mathbb{R}$-bilinear map. Let $W$ be an arbitrary $\mathbb{R}$-vector subspace of $V$, $W^{\perp \Omega}$ be the $\Omega$-orthogonal complement of $W$ in $V$ and $W_{\operatorname{quo}} := (W + V^{\perp \Omega}) / V^{\perp \Omega}$. Then $\Omega$ descends to a skew-symmetric $\mathbb{R}$-bilinear form $\widetilde{\Omega}: V_{\operatorname{quo}} \times V_{\operatorname{quo}} \to \mathbb{C}$. Suppose that there exists a complex structure $I$ on $V_{\operatorname{quo}}$ such that
\begin{equation*}
	\widetilde{\Omega}(Iu, v) = \sqrt{-1} \widetilde{\Omega}(u, v),
\end{equation*}
for all $u, v \in V_{\operatorname{quo}}$. Note that such a complex structure is unique. Denote the imaginary part of $\Omega$ by $\omega$ and let $W^{\perp \omega}$ be the $\omega$-orthogonal complement of $W$ in $V$. The condition on the complex structure $I$ implies that $\omega$ descends to a non-degenerate $\mathbb{R}$-bilinear form on $V_{\operatorname{quo}}$ and hence $V^{\perp \Omega} = V^{\perp_\omega}$. Similar statements hold for the real part of $\Omega$. 

\begin{proposition}
	\label{Proposition A.1}
	The following statements hold.
	\begin{itemize}
		\item If $W \subset U$ for a $\mathbb{R}$-vector subspace $U$ of $V$, then $U^{\perp \Omega} \subset W^{\perp \Omega}$. In particular, $V^{\perp \Omega} \subset W^{\perp \Omega}$.
		\item $(W^{\perp \Omega})_{\operatorname{quo}} = W^{\perp \Omega} / V^{\perp \Omega}$ is a $\mathbb{C}$-vector subspace of $V_{\operatorname{quo}}$.
		\item $W^{\perp \Omega} = \widehat{W}^{\perp \Omega}$, where $\widehat{W}$ is the preimage of $W_{\operatorname{quo}} + I W_{\operatorname{quo}}$ under the quotient map $V \to V_{\operatorname{quo}}$.
		\item If $V^{\perp \Omega} \subset W$ and $IW_{\operatorname{quo}} \subset W_{\operatorname{quo}}$, then $W^{\perp \Omega} = W^{\perp_\omega}$, $(W^{\perp \Omega})^{\perp \Omega} = W$ and
		\begin{equation*}
			\dim W + \dim W^{\perp \Omega} = \dim V + \dim V^{\perp \Omega}.
		\end{equation*}
	\end{itemize}
\end{proposition}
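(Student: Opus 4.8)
The plan is to reduce everything to the nondegenerate quotient $(V_{\operatorname{red}}, \widetilde{\Omega})$ and to exploit that the defining relation $\widetilde{\Omega}(Iu,v) = \sqrt{-1}\,\widetilde{\Omega}(u,v)$, together with skew-symmetry, makes $\widetilde{\Omega}$ a nondegenerate $\mathbb{C}$-bilinear (complex symplectic) form on the complex vector space $(V_{\operatorname{red}}, I)$; indeed the relation also gives $\widetilde{\Omega}(u, Iv) = \sqrt{-1}\,\widetilde{\Omega}(u,v)$, so $\widetilde{\Omega}$ is $\mathbb{C}$-linear in both slots. The first bullet needs none of this machinery: it is the usual antitonicity of the annihilator, since $v \in U^{\perp\Omega}$ forces $\Omega(v,\cdot)$ to vanish on the smaller space $W \subset U$; taking $U = V$ yields $V^{\perp\Omega} \subset W^{\perp\Omega}$, which is exactly what makes all the subsequent reductions well defined.

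For the second and third bullets I would first record the key identity that, writing $\pi\colon V \to V_{\operatorname{red}}$ for the quotient, $\pi(W^{\perp\Omega}) = (W_{\operatorname{red}})^{\perp\widetilde{\Omega}}$, which holds because $\Omega(u,w)$ depends only on the classes $\pi u, \pi w$. The $\mathbb{C}$-bilinearity of $\widetilde{\Omega}$ then shows this reduced annihilator is $I$-invariant: if $\widetilde{\Omega}(\bar u, \bar w) = 0$ for all $\bar w \in W_{\operatorname{red}}$, then $\widetilde{\Omega}(\bar u, I\bar w) = \sqrt{-1}\,\widetilde{\Omega}(\bar u, \bar w) = 0$ as well, giving the second bullet. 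For the third bullet, the inclusion $\widehat{W}^{\perp\Omega} \subset W^{\perp\Omega}$ is the first bullet applied to $W \subset \widehat{W}$, while the reverse inclusion is precisely the computation just made: any $u \in W^{\perp\Omega}$ annihilates $W_{\operatorname{red}}$, hence also $I W_{\operatorname{red}}$, hence all of $\widehat{W}$.

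The substantive content is the fourth bullet, and the main obstacle is the comparison $W^{\perp\Omega} = W^{\perp\omega}$ between the complex- and real-form annihilators. One inclusion is free. For the converse I would first show that $\omega = \operatorname{Im}\Omega$ descends to a \emph{nondegenerate} real form on $V_{\operatorname{red}}$: if $\operatorname{Im}\widetilde{\Omega}(\bar v, \cdot) = 0$, then feeding $I(\cdot)$ into the second slot and using $\widetilde{\Omega}(\bar v, Ix) = \sqrt{-1}\,\widetilde{\Omega}(\bar v, x)$ forces $\operatorname{Re}\widetilde{\Omega}(\bar v, \cdot) = 0$ too, so $\bar v = 0$ by nondegeneracy of $\widetilde{\Omega}$. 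Granting this, and crucially using the hypothesis $IW_{\operatorname{red}} \subset W_{\operatorname{red}}$, a vector $\omega$-orthogonal to $W$ is in fact $\Omega$-orthogonal: the vanishing of $\operatorname{Im}\widetilde{\Omega}(\bar v, \bar w)$ on the \emph{complex} subspace $W_{\operatorname{red}}$ propagates through $I$ to annihilate the real part as well. The special case $W = V$ is the final assertion $V^{\perp\Omega} = V^{\perp\omega}$.

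With $W^{\perp\Omega} = W^{\perp\omega}$ in hand, the identity $(W^{\perp\Omega})^{\perp\Omega} = W$ and the dimension formula follow formally from the complex symplectic structure on $V_{\operatorname{red}}$. Since $W_{\operatorname{red}}$ is a complex subspace of the nondegenerate $(V_{\operatorname{red}}, \widetilde{\Omega})$, its complex annihilator satisfies the standard count $\dim_{\mathbb{C}} W_{\operatorname{red}} + \dim_{\mathbb{C}}(W_{\operatorname{red}})^{\perp\widetilde{\Omega}} = \dim_{\mathbb{C}} V_{\operatorname{red}}$ and the double-annihilator identity $((W_{\operatorname{red}})^{\perp\widetilde{\Omega}})^{\perp\widetilde{\Omega}} = W_{\operatorname{red}}$. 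I would lift both back to $V$ through $\pi$, using $V^{\perp\Omega} \subset W$ (so that $\pi^{-1}(W_{\operatorname{red}}) = W$ and $\pi^{-1}((W_{\operatorname{red}})^{\perp\widetilde{\Omega}}) = W^{\perp\Omega}$), and convert to real dimensions via $\dim_{\mathbb{R}} = 2\dim_{\mathbb{C}}$ on the reduced parts together with $\dim_{\mathbb{R}} V = \dim_{\mathbb{R}} V^{\perp\Omega} + \dim_{\mathbb{R}} V_{\operatorname{red}}$. I expect the only delicate point to be bookkeeping the real-versus-complex orthogonal complements consistently; the nondegeneracy of $\operatorname{Im}\widetilde{\Omega}$ is the linchpin that lets the real form $\omega$ detect exactly what the complex form $\Omega$ detects.
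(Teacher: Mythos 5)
Your argument is correct and complete: reducing to the nondegenerate $\mathbb{C}$-bilinear form $\widetilde{\Omega}$ on $(V_{\operatorname{red}},I)$, using the identity $\pi(W^{\perp\Omega})=(W_{\operatorname{red}})^{\perp\widetilde{\Omega}}$, and lifting the standard complex-symplectic dimension and double-annihilator identities back through $\pi$ is exactly the intended route; the paper itself leaves these proofs to the reader, so there is no in-text argument to compare against. One cosmetic point: in your second bullet the displayed computation $\widetilde{\Omega}(\bar u, I\bar w)=\sqrt{-1}\,\widetilde{\Omega}(\bar u,\bar w)=0$ shows that $\bar u$ annihilates $IW_{\operatorname{red}}$ (which is what the third bullet needs), whereas $I$-invariance of the annihilator requires $\widetilde{\Omega}(I\bar u,\bar w)=\sqrt{-1}\,\widetilde{\Omega}(\bar u,\bar w)=0$; both are immediate from the $\mathbb{C}$-bilinearity you established, so this is a one-line fix rather than a gap.
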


\section{Transverse holomorphic structures}
\label{Appendix B}
Let $C$ be a smooth manifold. Recall that there is a one-to-one correspondence between
\begin{itemize}
	\item a complex vector subbundle $\mathcal{E}$ of $TC \otimes \mathbb{C}$ such that $\mathcal{E} + \overline{\mathcal{E}} = TC \otimes \mathbb{C}$, and
	\item a vector subbundle $\mathcal{F}$ of $TC$ with an almost complex structure $I$ on $T_{\operatorname{quo}}C := TC / \mathcal{F}$.
\end{itemize}
Under this correspondence, $\mathcal{E}$ is the kernel of the canonical projection $TC \otimes \mathbb{C} \to T_{\operatorname{quo}}^{1, 0} C$, which is equal to the preimage of $T_{\operatorname{quo}}^{0, 1} C$ under the canonical projection $TC \otimes \mathbb{C} \to T_{\operatorname{quo}}C \otimes \mathbb{C}$. Moreover, $\mathcal{E} \cap \overline{\mathcal{E}} \cap TC = \mathcal{F}$. We say that $I$ is \emph{integrable} and $\mathcal{E}$ is a \emph{transverse holomorphic structure} on $C$ if $\mathcal{E}$ is involutive (which implies that $\mathcal{F}$ is also involutive).\par
From now on in this appendix, we suppose that $C$ is a transverse complex manifold.

\subsection{Transverse biholomorphisms and transversely holomorphic vector fields}
\quad\par
\begin{proposition}
	\label{Proposition B.1}
	Let $\phi: C \to C$ be a smooth map. Then the following conditions are equivalent.
	\begin{enumerate}
		\item \label{Proposition B.1 (1)} $d\phi: TC \otimes \mathbb{C} \to TC \otimes \mathbb{C}$ preserves $\mathcal{E}$.
		\item \label{Proposition B.1 (2)} $d\phi: TC \otimes \mathbb{C} \to TC \otimes \mathbb{C}$ preserves $\overline{\mathcal{E}}$.
		\item \label{Proposition B.1 (3)} $d\phi: TC \to TC$ descends to a vector bundle morphism $\widetilde{d\phi}: T_{\operatorname{quo}}C \to T_{\operatorname{quo}}C$ such that 
		\begin{equation}
			\label{Equation B.1}
			I \circ \widetilde{d\phi} = \widetilde{d\phi} \circ I,
		\end{equation}
	\end{enumerate}
\end{proposition}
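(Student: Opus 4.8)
The plan is to reduce the entire statement to fibrewise linear algebra and to exploit the two structural facts recalled at the start of this appendix: that $\mathcal{E}$ is the preimage $\pi^{-1}(T_{\operatorname{red}}^{0,1}C)$ of the $(0,1)$-bundle under the canonical projection $\pi: TC \otimes \mathbb{C} \to T_{\operatorname{red}}C \otimes \mathbb{C}$, whose kernel is $\mathcal{F} \otimes \mathbb{C}$, and that $\mathcal{E} \cap \overline{\mathcal{E}} = \mathcal{F} \otimes \mathbb{C}$. Everything is carried out at a fixed point $x \in C$ for the complexified differential $d\phi_x: T_xC \otimes \mathbb{C} \to T_{\phi(x)}C \otimes \mathbb{C}$, where conditions such as ``$d\phi$ preserves $\mathcal{E}$'' mean $d\phi_x(\mathcal{E}_x) \subseteq \mathcal{E}_{\phi(x)}$ for all $x$.

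First I would establish the equivalence of (\ref{Proposition B.1 (1)}) and (\ref{Proposition B.1 (2)}). Since $d\phi_x$ is the complexification of the real differential $T_xC \to T_{\phi(x)}C$, it commutes with complex conjugation, so $d\phi_x(\overline{\mathcal{E}_x}) = \overline{d\phi_x(\mathcal{E}_x)}$. Conjugating the inclusion $d\phi_x(\mathcal{E}_x) \subseteq \mathcal{E}_{\phi(x)}$ therefore yields $d\phi_x(\overline{\mathcal{E}_x}) \subseteq \overline{\mathcal{E}_{\phi(x)}}$ and conversely, giving the equivalence at once. Next, for (\ref{Proposition B.1 (1)}) $\Rightarrow$ (\ref{Proposition B.1 (3)}), assume $d\phi$ preserves $\mathcal{E}$, hence also $\overline{\mathcal{E}}$ by the step just done, and therefore their intersection $\mathcal{E} \cap \overline{\mathcal{E}} = \mathcal{F} \otimes \mathbb{C}$; passing to real points shows $d\phi$ preserves $\mathcal{F}$, so it descends to a bundle morphism $\widetilde{d\phi}: T_{\operatorname{red}}C \to T_{\operatorname{red}}C$ whose complexification $\widetilde{d\phi}_\mathbb{C}$ satisfies the intertwining relation $\pi \circ d\phi = \widetilde{d\phi}_\mathbb{C} \circ \pi$. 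Applying $\pi$ to $d\phi(\mathcal{E}) \subseteq \mathcal{E}$ and using $\pi(\mathcal{E}) = T_{\operatorname{red}}^{0,1}C$ shows $\widetilde{d\phi}_\mathbb{C}$ preserves $T_{\operatorname{red}}^{0,1}C$; since $\widetilde{d\phi}$ is real it then preserves the conjugate $T_{\operatorname{red}}^{1,0}C$ as well, and because $I_\mathbb{C}$ acts as the scalars $\mp\sqrt{-1}$ on these two eigenbundles, $\widetilde{d\phi}_\mathbb{C}$ commutes with $I_\mathbb{C}$, which is precisely (\ref{Equation B.1}).

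Finally, for (\ref{Proposition B.1 (3)}) $\Rightarrow$ (\ref{Proposition B.1 (1)}) I would reverse the last argument: the commutation (\ref{Equation B.1}) says $\widetilde{d\phi}_\mathbb{C}$ preserves the $-\sqrt{-1}$-eigenbundle $T_{\operatorname{red}}^{0,1}C$, so for $v \in \mathcal{E}$ one computes $\pi(d\phi(v)) = \widetilde{d\phi}_\mathbb{C}(\pi(v)) \in T_{\operatorname{red}}^{0,1}C$, whence $d\phi(v) \in \pi^{-1}(T_{\operatorname{red}}^{0,1}C) = \mathcal{E}$. The only place demanding care is the bookkeeping that $d\phi$ genuinely descends through $\pi$ — this is exactly the first half of condition (\ref{Proposition B.1 (3)}), which supplies the intertwining relation $\pi \circ d\phi = \widetilde{d\phi}_\mathbb{C} \circ \pi$ that both implications hinge on. Beyond that, the content is the standard identification between complex-linearity of a real map and invariance of its $(0,1)$-eigenspace, together with the dictionary of Proposition \ref{Proposition A.1}; I do not anticipate any genuine obstacle, as the proof is entirely formal linear algebra applied fibrewise.
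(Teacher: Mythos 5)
Your proof is correct. The paper actually states Proposition \ref{Proposition B.1} without proof, and your argument is exactly the standard one it implicitly relies on: conjugation-equivariance of the (real) complexified differential gives (\ref{Proposition B.1 (1)}) $\Leftrightarrow$ (\ref{Proposition B.1 (2)}), preservation of $\mathcal{E} \cap \overline{\mathcal{E}} = \mathcal{F} \otimes \mathbb{C}$ gives the descent to $T_{\operatorname{red}}C$, and the identification of $\mathcal{E}$ with $\pi^{-1}(T_{\operatorname{red}}^{0,1}C)$ converts the commutation with $I$ into preservation of the eigenbundles and back. The only cosmetic quibble is the closing reference to Proposition \ref{Proposition A.1}, which concerns $\Omega$-orthogonal complements and plays no role here.
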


\begin{definition}
	A \emph{transverse biholomorphism} of $C$ is a diffeomorphism $\phi$ of $C$ satisfying one of the equivalent conditions in Proposition \ref{Proposition B.1}.
\end{definition}

The following proposition shows that a diffeomorphism preserving a transverse holomorphic symplectic form is a transverse biholomorphism.

\begin{proposition}
	Let $(C, \Omega)$ be a transverse holomorphic symplectic manifold. If $\phi: C \to C$ is a diffeomorphism and $\phi^*\Omega = \Omega$, then $\phi$ is a transverse biholomorphism of $C$.
\end{proposition}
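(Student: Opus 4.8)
The plan is to reduce everything to the uniqueness of the complex structure $I$ associated with a nondegenerate transverse form, which is recorded in Appendix \ref{Appendix A}. Concretely, I will verify condition (\ref{Proposition B.1 (3)}) of Proposition \ref{Proposition B.1}, namely that $d\phi$ descends to $T_{\operatorname{red}}C$ and commutes with $I$ there.

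First I would show that $\phi$ preserves the kernel distribution $\mathcal{F} = (TC)^{\perp \Omega}$. If $u \in \mathcal{F}_x$, then $\Omega_x(u, w) = 0$ for all $w \in T_xC$; since $\phi^*\Omega = \Omega$ and $d\phi_x$ is a linear isomorphism onto $T_{\phi(x)}C$, we get $\Omega_{\phi(x)}(d\phi_x u, \cdot) = 0$, so $d\phi_x u \in \mathcal{F}_{\phi(x)}$. Hence $d\phi$ preserves $\mathcal{F}$ and descends to a bundle isomorphism $\widetilde{d\phi} : T_{\operatorname{red}}C \to T_{\operatorname{red}}C$ covering $\phi$, and the identity $\phi^*\Omega = \Omega$ descends to $\widetilde{\Omega}_x(u, v) = \widetilde{\Omega}_{\phi(x)}(\widetilde{d\phi}_x u, \widetilde{d\phi}_x v)$ for all $u, v \in T_{\operatorname{red}, x}C$.

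The key step is then to transport $I$ by $\widetilde{d\phi}$ and invoke uniqueness. I would set $I'_x := (\widetilde{d\phi}_x)^{-1} \circ I_{\phi(x)} \circ \widetilde{d\phi}_x$, which is again an almost complex structure on $T_{\operatorname{red}, x}C$. Using the $\widetilde{d\phi}$-invariance of $\widetilde{\Omega}$ together with the defining relation $\widetilde{\Omega}(I u, v) = \sqrt{-1}\,\widetilde{\Omega}(u, v)$ for $I_{\phi(x)}$, the one-line computation $\widetilde{\Omega}_x(I'_x u, v) = \widetilde{\Omega}_{\phi(x)}(I_{\phi(x)} \widetilde{d\phi}_x u, \widetilde{d\phi}_x v) = \sqrt{-1}\,\widetilde{\Omega}_{\phi(x)}(\widetilde{d\phi}_x u, \widetilde{d\phi}_x v) = \sqrt{-1}\,\widetilde{\Omega}_x(u, v)$ shows that $I'$ satisfies exactly the relation characterizing $I$. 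Since $\widetilde{\Omega}$ is nondegenerate on $T_{\operatorname{red}}C$, such a complex structure is unique (Appendix \ref{Appendix A}), whence $I'_x = I_x$, i.e. $I_{\phi(x)} \circ \widetilde{d\phi}_x = \widetilde{d\phi}_x \circ I_x$. This is condition (\ref{Proposition B.1 (3)}), so $\phi$ is a transverse biholomorphism.

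There is essentially no obstacle here: the content is entirely packaged into the uniqueness statement of Appendix \ref{Appendix A}. The only point demanding minor care is the bookkeeping of base points -- $I$ must be evaluated at $\phi(x)$ when conjugating by $\widetilde{d\phi}_x$ -- but this is purely notational. One should also note that $\widetilde{\Omega}$ is genuinely nondegenerate on $T_{\operatorname{red}}C$ by construction, since we have quotiented precisely by its kernel $\mathcal{F}$, and it is this nondegeneracy that legitimizes the appeal to uniqueness.
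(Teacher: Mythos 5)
Your proposal is correct and follows essentially the same route as the paper: both show that $\phi^*\Omega = \Omega$ forces $d\phi$ to preserve $(TC)^{\perp\Omega}$ and hence descend to $\widetilde{d\phi}$ on $T_{\operatorname{red}}C$, and both then deduce $\widetilde{d\phi}\circ I = I\circ\widetilde{d\phi}$ from the defining relation $\widetilde{\Omega}(Iu,v)=\sqrt{-1}\,\widetilde{\Omega}(u,v)$ together with the nondegeneracy of $\widetilde{\Omega}$ on the reduced bundle. Your phrasing via uniqueness of the compatible complex structure and the paper's direct four-step computation are the same argument in different packaging.
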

\begin{proof}
	Since $\phi^*\Omega = \Omega$, $\phi$ preserves $(TC)^{\perp \Omega}$. 
	In particular, 
	$d\phi: TC \to \phi^*(TC)$ descends to a vector bundle isomorphism $\widetilde{d\phi}: T_{\operatorname{quo}}C \to \phi^*(T_{\operatorname{quo}}C)$. Note that for all $x \in C$ and $u, v \in T_{\operatorname{quo}, x}C$,
	\begin{equation*}
		\widetilde{\Omega}(\widetilde{d\phi}(Iu), v) = \widetilde{\Omega}(Iu, \widetilde{d\phi}^{-1}(v)) = \sqrt{-1} \widetilde{\Omega}(u, \widetilde{d\phi}^{-1}(v)) = \sqrt{-1} \widetilde{\Omega}(\widetilde{d\phi}(u), v) = \widetilde{\Omega}(I\widetilde{d\phi}(u), v),
	\end{equation*}
	whence $\widetilde{d\phi} \circ I = I \circ \widetilde{d\phi}$. We are done.
\end{proof}

Now, we are concerned about vector fields on $C$ preserving its transverse holomorphic structure. For any $v \in \Gamma(C, TC)$, define $\widetilde{v} \in \Gamma(C, T_{\operatorname{quo}}C)$ by $\widetilde{v}(x) = v(x) + \mathcal{F}_x$ for all $x \in C$.

\begin{proposition}
	\label{Proposition B.4}
	Let $v \in \Gamma(C, TC)$. Then the following conditions are equivalent:
	\begin{enumerate}
		\item \label{Proposition B.4 (1)}
		For all $w \in \Gamma(C, \mathcal{E})$, $[v, w] \in \Gamma(C, \mathcal{E})$.
		\item \label{Proposition B.4 (2)}
		For all $w \in \Gamma(C, \overline{\mathcal{E}})$, $[v, w] \in \Gamma(C, \overline{\mathcal{E}})$.
		\item \label{Proposition B.4 (3)}
		For all $w_0, w_1 \in \Gamma(C, TC)$ such that $\widetilde{w_1} = I\widetilde{w_0}$, then $\widetilde{[v, w_1]} = I\widetilde{[v, w_0]}$.
	\end{enumerate}
\end{proposition}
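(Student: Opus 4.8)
The plan is to prove all three equivalences by translating each condition through the dictionary recorded at the start of this appendix, where $\mathcal{E}$ is the preimage of $T_{\operatorname{red}}^{0,1}C$ under the complex-linear projection $\pi_\mathbb{C}: TC \otimes \mathbb{C} \to T_{\operatorname{red}}C \otimes \mathbb{C}$. The key observation I would establish first is a correspondence between sections of $\mathcal{E}$ and pairs of \emph{real} vector fields: a complex field $w \in \Gamma(C, TC \otimes \mathbb{C})$ lies in $\Gamma(C, \mathcal{E})$ if and only if, writing $w = w_0 + \sqrt{-1} w_1$ with $w_0, w_1 \in \Gamma(C, TC)$, one has $\widetilde{w_1} = I\widetilde{w_0}$. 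This reduces to a pointwise eigenspace computation: since $\pi_\mathbb{C}(w) = \widetilde{w_0} + \sqrt{-1}\widetilde{w_1}$ and all of $\widetilde{w_0}, \widetilde{w_1}, I\widetilde{w_0}, I\widetilde{w_1}$ are real, $\pi_\mathbb{C}(w)$ lies in the $(-\sqrt{-1})$-eigenspace $T_{\operatorname{red}}^{0,1}C$ of the complex-linear extension of $I$ exactly when $I\widetilde{w_0} = \widetilde{w_1}$ (the companion relation $I\widetilde{w_1} = -\widetilde{w_0}$ then being automatic). Because the real--imaginary decomposition is unique, this exhibits $(w_0, w_1) \mapsto w_0 + \sqrt{-1} w_1$ as a bijection from pairs satisfying $\widetilde{w_1} = I\widetilde{w_0}$ onto $\Gamma(C, \mathcal{E})$.

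Next I would dispatch the equivalence of (\ref{Proposition B.4 (1)}) and (\ref{Proposition B.4 (2)}) by complex conjugation. Since $v$ is a real vector field, $\overline{[v, w]} = [v, \overline{w}]$, so $[v, w] \in \Gamma(C, \mathcal{E})$ holds if and only if $[v, \overline{w}] \in \Gamma(C, \overline{\mathcal{E}})$; and as $w$ ranges over $\Gamma(C, \mathcal{E})$, its conjugate $\overline{w}$ ranges bijectively over $\Gamma(C, \overline{\mathcal{E}})$. This converts (\ref{Proposition B.4 (1)}) verbatim into (\ref{Proposition B.4 (2)}).

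For (\ref{Proposition B.4 (1)}) $\Leftrightarrow$ (\ref{Proposition B.4 (3)}) I would once more exploit that $v$ is real, so that the bracket is compatible with the real--imaginary splitting: for $w = w_0 + \sqrt{-1} w_1$ one gets $[v, w] = [v, w_0] + \sqrt{-1}[v, w_1]$ with both $[v, w_0]$ and $[v, w_1]$ real. Applying the correspondence to $w$ shows that quantifying over all $w \in \Gamma(C, \mathcal{E})$ is the same as quantifying over all pairs $(w_0, w_1)$ of real fields with $\widetilde{w_1} = I\widetilde{w_0}$; applying it to $[v, w]$ shows that $[v, w] \in \Gamma(C, \mathcal{E})$ is precisely the relation $\widetilde{[v, w_1]} = I\widetilde{[v, w_0]}$. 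Matching the two quantifications yields (\ref{Proposition B.4 (3)}).

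The hard part will not be any single implication but rather making the dictionary airtight: one must verify that the pointwise projection $\widetilde{\cdot}$ coincides with $\pi_\mathbb{C}$ on real fields, keep straight which eigenspace of $I$ corresponds to $\mathcal{E}$ as opposed to $\overline{\mathcal{E}}$, and confirm that reality of $v$ is exactly what licenses both the conjugation identity $\overline{[v, w]} = [v, \overline{w}]$ and the commutation of $[v, \cdot]$ with taking real and imaginary parts. Once this bookkeeping is in place each direction is immediate, and I note that no integrability hypothesis on $\mathcal{E}$ (nor on $I$) is required for these equivalences.
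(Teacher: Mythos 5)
Your proposal is correct, and the paper itself states Proposition \ref{Proposition B.4} without proof, so there is nothing to diverge from: your argument is precisely the routine verification the authors intend. The dictionary you set up --- $w = w_0 + \sqrt{-1}\,w_1 \in \Gamma(C, \mathcal{E})$ if and only if $\widetilde{w_1} = I\widetilde{w_0}$ --- matches the convention the paper uses implicitly (e.g.\ in the proof of Lemma \ref{Lemma B.7}, where $v + \sqrt{-1}v'$ with $\widetilde{v'} = I\widetilde{v}$ is taken to lie in $\Gamma(C, \mathcal{E})$), and your two reductions (conjugation using the reality of $v$ for the equivalence of the first two conditions, and compatibility of $[v, \cdot\,]$ with the real--imaginary splitting for the third) are airtight; your closing remark that no involutivity of $\mathcal{E}$ is needed is also accurate.
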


When $(C, I)$ is a complex manifold (and hence $\mathcal{E} = T^{0, 1}C$), a real-valued vector field $v \in \Gamma(C, TC)$ satisfies one of the equivalent conditions in Proposition \ref{Proposition B.4} if and only if its $(1, 0)$-part $v^{1, 0} = \tfrac{1}{2}(v - \sqrt{-1} Iv)$ is holomorphic, i.e. $\overline{\partial} v^{1, 0} = 0$. This justifies the following definition.

\begin{definition}
	A real-valued vector field on $C$ is said to be \emph{transversely holomorphic} on $(C, \mathcal{E})$ if it satisfies one of the equivalent conditions in Proposition \ref{Proposition B.4}.
\end{definition}

Let $\Gamma_I(C, TC)$ be the space of transversely holomorphic real-valued vector fields on $(C, \mathcal{E})$ and $\Gamma_I(C, T_{\operatorname{quo}}C)$ be the image of $\Gamma_I(C, TC)$ under the canonical projection $\Gamma(C, TC) \to \Gamma(C, T_{\operatorname{quo}}C)$. Then we have a short exact sequence of vector spaces:
\begin{center}
	\begin{tikzcd}
		\Gamma(C, \mathcal{F}) \ar[r] & \Gamma_I(C, TC) \ar[r] & \Gamma_I(C, T_{\operatorname{quo}}C)
	\end{tikzcd}
\end{center}
The space $H_\mathcal{F}^0(C, T_{\operatorname{quo}}C)$ of flat sections in $\Gamma(C, T_{\operatorname{quo}}C)$ with respect to the $\mathcal{F}$-Bott connection inherits a Lie bracket such that if $u, v \in \Gamma(C, TC)$ and $\widetilde{u}, \widetilde{v} \in H_\mathcal{F}^0(C, T_{\operatorname{quo}}C)$, then $[\widetilde{u}, \widetilde{v}] = \widetilde{[u, v]}$.

\begin{lemma}
	We have $\Gamma_I(C, T_{\operatorname{quo}}C) \subset H_\mathcal{F}^0(C, T_{\operatorname{quo}}C)$.
\end{lemma}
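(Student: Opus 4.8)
The plan is to unwind both sides of the asserted inclusion into explicit Lie-bracket conditions and then match them. Recall that the $\mathcal{F}$-Bott connection sends a class $\widetilde{v} \in \Gamma(C, T_{\operatorname{red}}C)$, with chosen lift $v \in \Gamma(C, TC)$, to $\widetilde{[X, v]} \in \Gamma(C, T_{\operatorname{red}}C)$ for $X \in \Gamma(C, \mathcal{F})$; this is well defined precisely because $\mathcal{F}$ is involutive, so that the class $\widetilde{[X, v]}$ is independent of the lift. Consequently, $\widetilde{v} \in H_\mathcal{F}^0(C, T_{\operatorname{red}}C)$ means exactly that $[X, v] \in \Gamma(C, \mathcal{F})$ for every $X \in \Gamma(C, \mathcal{F})$. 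So, starting from a transversely holomorphic $v \in \Gamma_I(C, TC)$, I must show that bracketing it with an arbitrary section of $\mathcal{F}$ keeps us inside $\mathcal{F}$.

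First I would fix $X \in \Gamma(C, \mathcal{F})$ and record that, since $\mathcal{F} = \mathcal{E} \cap \overline{\mathcal{E}} \cap TC$, the real vector field $X$ is (after complexification) simultaneously a section of $\mathcal{E}$ and of $\overline{\mathcal{E}}$. Because $v$ is transversely holomorphic, it satisfies all of the equivalent conditions of Proposition \ref{Proposition B.4}; applying condition (\ref{Proposition B.4 (1)}) with $w = X$ gives $[v, X] \in \Gamma(C, \mathcal{E})$, while applying condition (\ref{Proposition B.4 (2)}) with $w = X$ gives $[v, X] \in \Gamma(C, \overline{\mathcal{E}})$. Hence $[v, X] \in \Gamma(C, \mathcal{E} \cap \overline{\mathcal{E}})$.

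The key step is then to exploit the reality of $v$ and $X$: as both are real vector fields, $[v, X]$ is a real section of $TC$, so it lies in $\mathcal{E} \cap \overline{\mathcal{E}} \cap TC = \mathcal{F}$. Thus $[X, v] = -[v, X] \in \Gamma(C, \mathcal{F})$, i.e. the Bott derivative $\widetilde{[X, v]}$ vanishes; since $X$ was arbitrary, $\widetilde{v}$ is Bott-flat, giving $\widetilde{v} \in H_\mathcal{F}^0(C, T_{\operatorname{red}}C)$ and hence the inclusion $\Gamma_I(C, T_{\operatorname{red}}C) \subset H_\mathcal{F}^0(C, T_{\operatorname{red}}C)$. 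I do not expect a genuine obstacle here: the whole argument is the translation of ``transversely holomorphic'' into the two complementary bracket conditions of Proposition \ref{Proposition B.4}, combined with the elementary observation that a \emph{real} field in $\mathcal{E} \cap \overline{\mathcal{E}}$ must already lie in $\mathcal{F}$. The only point deserving a line of care is the bookkeeping between the real bracket of $v$ and $X$ and its $\mathbb{C}$-linear extension used in Proposition \ref{Proposition B.4}, which agree on real vector fields.
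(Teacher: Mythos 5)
Your proof is correct and is essentially identical to the paper's: both fix a real section of $\mathcal{F}$, use the two equivalent conditions of Proposition \ref{Proposition B.4} to place the bracket in $\mathcal{E}$ and in $\overline{\mathcal{E}}$, and then use reality to conclude it lies in $\mathcal{F} = \mathcal{E} \cap \overline{\mathcal{E}} \cap TC$. The only difference is that you also spell out the translation of Bott-flatness into the bracket condition, which the paper leaves implicit.
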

\begin{proof}
	Fix $v \in \Gamma_I(C, TC)$ and $w \in \Gamma(C, \mathcal{F})$. As $w \in \Gamma(C, \mathcal{E})$, $[v, w] \in \Gamma(C, \mathcal{E})$; as $w \in \Gamma(C, \overline{\mathcal{E}})$, $[v, w] \in \Gamma(C, \overline{\mathcal{E}})$. Therefore, $[v, w] \in \Gamma(C, \mathcal{F} \otimes \mathbb{C})$. Since $v, w$ are real-valued, $[v, w] \in \Gamma(C, \mathcal{F})$.
\end{proof}

\begin{lemma}
	\label{Lemma B.7}
	$(\Gamma_I(C, T_{\operatorname{quo}}C), I, [\quad, \quad])$ is a complex Lie algebra.
\end{lemma}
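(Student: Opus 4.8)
The plan is to verify the three properties constituting a complex Lie algebra structure on $\Gamma_I(C, T_{\operatorname{red}}C)$: that it is closed under the inherited bracket, that it is preserved by $I$ (so $I$ serves as multiplication by $\sqrt{-1}$), and that the bracket is $\mathbb{C}$-bilinear, i.e. $[I\widetilde{u}, \widetilde{v}] = I[\widetilde{u}, \widetilde{v}]$ for all $\widetilde{u}, \widetilde{v} \in \Gamma_I(C, T_{\operatorname{red}}C)$. Since we have already shown $\Gamma_I(C, T_{\operatorname{red}}C) \subset H_\mathcal{F}^0(C, T_{\operatorname{red}}C)$, the bracket $[\widetilde{u}, \widetilde{v}] = \widetilde{[u, v]}$ is well defined, and antisymmetry and the Jacobi identity are inherited from the Lie bracket of vector fields; so only closure, $I$-invariance and $\mathbb{C}$-bilinearity require argument.

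First I would establish closure under the bracket. Given real transversely holomorphic $u, v$ and any $w \in \Gamma(C, \mathcal{E})$, the Jacobi identity gives $[[u, v], w] = [u, [v, w]] - [v, [u, w]]$; since $[v, w], [u, w] \in \Gamma(C, \mathcal{E})$ by transverse holomorphy of $u, v$, applying condition (\ref{Proposition B.4 (1)}) of Proposition \ref{Proposition B.4} once more shows $[[u, v], w] \in \Gamma(C, \mathcal{E})$. Hence $[u, v]$ is transversely holomorphic and $[\widetilde{u}, \widetilde{v}] = \widetilde{[u, v]} \in \Gamma_I(C, T_{\operatorname{red}}C)$.

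The technical core is a decomposition step. Using $\mathcal{E} + \overline{\mathcal{E}} = TC \otimes \mathbb{C}$ together with the reality of $v$, I would write any real transversely holomorphic $v$ as $v = \widehat{\alpha} + \overline{\widehat{\alpha}} + f$ with $\widehat{\alpha} \in \Gamma(C, \overline{\mathcal{E}})$ and $f \in \Gamma(C, \mathcal{F})$. Because $\overline{\widehat{\alpha}} + f \in \Gamma(C, \mathcal{E})$ and $\mathcal{E}$ is involutive, transverse holomorphy of $v$ forces $[\widehat{\alpha}, \Gamma(C, \mathcal{E})] \subset \Gamma(C, \mathcal{E})$. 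Then $w' := \sqrt{-1}(\widehat{\alpha} - \overline{\widehat{\alpha}})$ is a genuine real vector field, is transversely holomorphic by the same bracket computation, and satisfies $\widetilde{w'} = I\widetilde{v}$ since $I$ acts as $\pm\sqrt{-1}$ on the $(1, 0)$ and $(0, 1)$ summands of $T_{\operatorname{red}}C \otimes \mathbb{C}$. This proves $I$-invariance and exhibits $I$ as the scalar multiplication by $\sqrt{-1}$.

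For $\mathbb{C}$-bilinearity I would compute the bracket modulo $\mathcal{E}$: writing $v = \widehat{\beta} + \overline{\widehat{\beta}} + g$ analogously, the involutivity of $\overline{\mathcal{E}}$ and the inclusions $[\widehat{\alpha}, \Gamma(C, \mathcal{E})], [\widehat{\beta}, \Gamma(C, \mathcal{E})] \subset \Gamma(C, \mathcal{E})$ collapse all cross terms, yielding $[u, v] \equiv [\widehat{\alpha}, \widehat{\beta}] \pmod{\mathcal{E}}$. Thus the $(1, 0)$-component of $\widetilde{[u, v]}$ is the class of $[\widehat{\alpha}, \widehat{\beta}]$ in $\overline{\mathcal{E}} / (\mathcal{F} \otimes \mathbb{C})$, depending $\mathbb{C}$-linearly on $\widehat{\alpha}$; replacing $u$ by the representative $w'$ of $I\widetilde{u}$ replaces $\widehat{\alpha}$ by $\sqrt{-1}\widehat{\alpha}$, hence scales this component by $\sqrt{-1}$. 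Since a real section of $T_{\operatorname{red}}C$ is determined by its $(1, 0)$-part, this gives $[I\widetilde{u}, \widetilde{v}] = I[\widetilde{u}, \widetilde{v}]$, and antisymmetry handles the second slot. The main obstacle throughout is managing the non-uniqueness of the $\mathcal{E}$--$\overline{\mathcal{E}}$ decomposition, whose ambiguity lives in $\mathcal{F} \otimes \mathbb{C}$, simultaneously with the reality constraint; once the decomposition step is set up cleanly, both remaining verifications reduce to the two bracket computations above.
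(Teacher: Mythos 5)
Your proof is correct and rests on the same mechanism as the paper's: produce a real representative of $I\widetilde{v}$ whose $\mathcal{E}$-component is controlled (your $w' = \sqrt{-1}(\widehat{\alpha} - \overline{\widehat{\alpha}})$ plays exactly the role of the paper's $v'$ with $v + \sqrt{-1}v' \in \Gamma(C, \mathcal{E})$) and then exploit involutivity of $\mathcal{E}$. You simply spell out the closure and $\mathbb{C}$-bilinearity steps that the paper delegates to condition (\ref{Proposition B.4 (3)}) of Proposition \ref{Proposition B.4}, which is fine.
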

\begin{proof}
	We first show that $I$ preserves $\Gamma_I(C, T_{\operatorname{quo}}C)$. Indeed, fix $v \in \Gamma_I(C, TC)$. Let $v' \in \Gamma(C, TC)$ be such that $\widetilde{v'} = I\widetilde{v}$. Then $v + \sqrt{-1}v' \in \Gamma(C, \mathcal{E})$. Suppose that $w \in \Gamma(C, \mathcal{E})$. Since $\mathcal{E}$ is involutive, $[v + \sqrt{-1}v', w] \in \Gamma(C, \mathcal{E})$. Since $v \in \Gamma_I(C, TC)$, $[v, w] \in \Gamma(C, \mathcal{E})$. Therefore, $[v', w] \in \Gamma(C, \mathcal{E})$. In conclusion, $v' \in \Gamma_I(C, TC)$. Eventually, by (\ref{Proposition B.4 (3)}) in Proposition \ref{Proposition B.4}, we can easily see that $(\Gamma_I(C, T_{\operatorname{quo}}C), I, [\quad, \quad])$ is a complex Lie algebra.
\end{proof}

Similarly, the space $H_\mathcal{E}^0(C, T_{\operatorname{quo}}^{1, 0} C)$ of flat sections in $\Gamma(C, T_{\operatorname{quo}}^{1, 0} C)$ with respect to the $\mathcal{E}$-Bott connection is a complex Lie algebra. Define a $\mathbb{C}$-linear isomorphism
\begin{equation*}
	(\Gamma(C, T_{\operatorname{quo}}C), I) \to (\Gamma(C, T_{\operatorname{quo}}^{1, 0} C), \sqrt{-1}), \quad \widetilde{v} \mapsto \widetilde{v}^{1, 0} := \tfrac{1}{2} (\widetilde{v} - \sqrt{-1} I \widetilde{v}).
\end{equation*}

\begin{proposition}
	The $\mathbb{C}$-linear map
	\begin{equation*}
		(\Gamma_I(C, T_{\operatorname{quo}}C), I, [\quad, \quad]) \to (H_\mathcal{E}^0(C, T_{\operatorname{quo}}^{1, 0} C), \sqrt{-1}, [\quad, \quad])
	\end{equation*}
	given by $\widetilde{v} \mapsto \widetilde{v}^{1, 0}$ is a complex Lie algebra isomorphism.
\end{proposition}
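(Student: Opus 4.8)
The plan is to leverage the fact, recorded immediately before the statement, that $\widetilde{v} \mapsto \widetilde{v}^{1, 0} = \tfrac{1}{2}(\widetilde{v} - \sqrt{-1} I \widetilde{v})$ is already a $\mathbb{C}$-linear isomorphism from $(\Gamma(C, T_{\operatorname{red}}C), I)$ onto $(\Gamma(C, T_{\operatorname{red}}^{1,0}C), \sqrt{-1})$. Thus $\mathbb{C}$-linearity and injectivity come for free, and the only genuinely new content is (i) that this map carries the subspace $\Gamma_I(C, T_{\operatorname{red}}C)$ \emph{onto} $H_\mathcal{E}^0(C, T_{\operatorname{red}}^{1, 0} C)$, and (ii) that it intertwines the two Lie brackets.

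The crucial bookkeeping device is the identification from Appendix \ref{Appendix B} that $\mathcal{E}$ is the kernel of the canonical projection $TC \otimes \mathbb{C} \to T_{\operatorname{red}}^{1, 0} C$, which exhibits $T_{\operatorname{red}}^{1, 0} C \cong (TC \otimes \mathbb{C}) / \mathcal{E}$ and sends a real vector field $v$ to $\widetilde{v}^{1, 0}$. Hence any real lift $v$ of $\widetilde{v}$ serves simultaneously as an $\mathcal{E}$-lift of $\widetilde{v}^{1, 0}$, and the $\mathcal{E}$-Bott connection reads $\nabla_w^\mathcal{E} \widetilde{v}^{1, 0} = [w, v] \bmod \mathcal{E}$ for $w \in \Gamma(C, \mathcal{E})$. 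This formula is well defined precisely because $\mathcal{E}$ is involutive.

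First I would settle the subspace claim. By the formula above, $\widetilde{v}^{1, 0} \in H_\mathcal{E}^0(C, T_{\operatorname{red}}^{1, 0} C)$ if and only if $[w, v] \in \Gamma(C, \mathcal{E})$ for every $w \in \Gamma(C, \mathcal{E})$; by antisymmetry of the bracket this is exactly condition (\ref{Proposition B.4 (1)}) of Proposition \ref{Proposition B.4}, i.e. $v \in \Gamma_I(C, TC)$. This single equivalence shows at once that the map lands in the target and is surjective: given $s \in H_\mathcal{E}^0(C, T_{\operatorname{red}}^{1, 0} C)$, the ambient isomorphism produces $\widetilde{v}$ with $\widetilde{v}^{1, 0} = s$; choosing any real lift $v$ (which exists globally by a partition of unity), the flatness of $s$ forces $v \in \Gamma_I(C, TC)$, whence $\widetilde{v} \in \Gamma_I(C, T_{\operatorname{red}}C)$ and $s$ lies in the image. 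Injectivity is inherited from the ambient isomorphism.

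For the bracket I would pick real lifts $u, v \in \Gamma_I(C, TC)$ of the two given elements. On the source, the bracket of Lemma \ref{Lemma B.7} is $[\widetilde{u}, \widetilde{v}] = \widetilde{[u, v]}$, whose image under the map is $\widetilde{[u, v]}^{1, 0}$, represented by $[u, v] \bmod \mathcal{E}$. On the target, using $u, v$ themselves as $\mathcal{E}$-lifts of $\widetilde{u}^{1, 0}, \widetilde{v}^{1, 0}$, the Lie bracket of $H_\mathcal{E}^0(C, T_{\operatorname{red}}^{1, 0} C)$ is likewise represented by $[u, v] \bmod \mathcal{E}$. The two representatives coincide, giving $[\widetilde{u}, \widetilde{v}]^{1, 0} = [\widetilde{u}^{1, 0}, \widetilde{v}^{1, 0}]$. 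I expect no genuine obstacle: the whole argument hinges on choosing the representative $v$ (rather than its $(1,0)$-projection), so that transverse holomorphicity in the sense of Proposition \ref{Proposition B.4} and Bott flatness become the \emph{same} condition. The only point requiring care is to confirm that both bracket definitions may be evaluated on the common real lifts modulo $\mathcal{E}$, which is exactly the well-definedness already built into the two Bott brackets.
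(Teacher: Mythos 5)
Your argument is correct, and it reaches the result by a route that is organized differently from the paper's. The paper works with the eigenspace projector $\widetilde{v}^{1,0} = \tfrac{1}{2}(\widetilde{v} - \sqrt{-1}\, I\widetilde{v})$ and an auxiliary real field $v'$ with $\widetilde{v'} = I\widetilde{v}$: containment of the image in $H_\mathcal{E}^0(C, T_{\operatorname{red}}^{1, 0}C)$ is deduced from Lemma \ref{Lemma B.7} (which supplies $v' \in \Gamma_I(C, TC)$, hence $[w, v - \sqrt{-1}v'] \in \Gamma(C, \mathcal{E})$), the bracket identity is verified by expanding $[\widetilde{v}^{1, 0}, \widetilde{w}^{1, 0}]$ term by term using the $\mathbb{C}$-bilinearity of the source bracket, and surjectivity is obtained by adding $[u - \sqrt{-1}u', w] \in \Gamma(C, \mathcal{E})$ (flatness) to $[u + \sqrt{-1}u', w] \in \Gamma(C, \mathcal{E})$ (involutivity of $\mathcal{E}$) to conclude $2u \in \Gamma_I(C, TC)$. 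You instead make the identification $T_{\operatorname{red}}^{1, 0}C \cong (TC \otimes \mathbb{C})/\mathcal{E}$ the organizing principle, so that a real lift $v$ of $\widetilde{v}$ is simultaneously an $\mathcal{E}$-lift of $\widetilde{v}^{1, 0}$; Bott-flatness of $\widetilde{v}^{1, 0}$ then coincides verbatim with condition (\ref{Proposition B.4 (1)}) of Proposition \ref{Proposition B.4}, which settles containment and surjectivity in a single equivalence, and the bracket identity follows by comparing common representatives modulo $\mathcal{E}$ rather than by the explicit $\tfrac{1}{4}$-expansion. The two surjectivity arguments are secretly identical --- the lift-independence of the Bott connection that you invoke is exactly the paper's step $[u + \sqrt{-1}u', w] \in \Gamma(C, \mathcal{E})$ --- but your packaging makes the mechanism transparent; what your route gains in economy it borrows from Lemma \ref{Lemma B.7} only for the well-definedness of the source bracket (via $\Gamma_I(C, T_{\operatorname{red}}C) \subset H_\mathcal{F}^0(C, T_{\operatorname{red}}C)$), which the statement of the proposition presupposes anyway.
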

\begin{proof}
	By Lemma \ref{Lemma B.7}, we can see that the image of $\Gamma_I(C, T_{\operatorname{quo}}C)$ under the $\mathbb{C}$-linear map $\widetilde{v} \mapsto \widetilde{v}^{1, 0}$ lies in $H_\mathcal{E}^0(C, T_{\operatorname{quo}}^{1, 0} C)$. If $\widetilde{v}, \widetilde{w} \in \Gamma_I(C, T_{\operatorname{quo}}C)$, then
	\begin{align*}
		[\widetilde{v}^{1, 0}, \widetilde{w}^{1, 0}] = & \tfrac{1}{4} ([\widetilde{v}, \widetilde{w}] - [I\widetilde{v}, I\widetilde{w}] - \sqrt{-1}( [I\widetilde{v}, \widetilde{w}] - \sqrt{-1} [\widetilde{v}, I\widetilde{w}] ))\\
		= & \tfrac{1}{2} ( [\widetilde{v}, \widetilde{w}] - \sqrt{-1} I[\widetilde{v}, \widetilde{w}] ) = [\widetilde{v}, \widetilde{w}]^{1, 0}.
	\end{align*}
	The second equality is due to Proposition \ref{Proposition B.4} \eqref{Proposition B.4 (3)}. Thus, the map $\Gamma_I(C, T_{\operatorname{quo}}C) \to H_\mathcal{E}^0(C, T_{\operatorname{quo}}^{1, 0} C)$, $\widetilde{v} \mapsto \widetilde{v}^{1, 0}$ is a complex Lie algebra homomorphism. It remains to show that it is surjective. Let $u, u' \in \Gamma(C, TC)$ be such that $\widetilde{u} - \sqrt{-1}\widetilde{u'} \in H_\mathcal{E}^0(C, T_{\operatorname{quo}}^{1, 0} C)$. Fix $w \in \Gamma(C, \mathcal{E})$. Then $[u - \sqrt{-1}u', w] \in \Gamma(C, \mathcal{E})$ because $\widetilde{u} - \sqrt{-1}\widetilde{u'}$ is flat with respect to the $\mathcal{E}$-Bott connection. On the other hand, note that $u + \sqrt{-1}u' \in \Gamma(C, \mathcal{E})$. Thus, $[u + \sqrt{-1}u', w] \in \Gamma(C, \mathcal{E})$. It implies that $v := 2\widetilde{u} \in \Gamma_I(C, TC)$ and $\widetilde{u} - \sqrt{-1}\widetilde{u'} = \widetilde{v}^{1, 0}$. We are done.
\end{proof}

\subsection{Transversely holomorphic actions}
\quad\par

\begin{definition}
	A \emph{transversely holomorphic} $G$\emph{-action} on $C$ is a smooth $G$-action $\rho_C$ on $C$ such that for all $g \in G$, $\rho_C(g)$ is a transverse biholomorphism of $C$.
\end{definition}

Now, let $G$ act on $C$ by a transversely holomorphic $G$-action and $\chi_C$ be the induced infinitesimal $\mathfrak{g}$-action. Define a $\mathbb{C}$-linear map $\widetilde{\chi}_C: \mathfrak{g}_\mathbb{C} \to \Gamma(C, T_{\operatorname{quo}}C)$ as follows:
\begin{equation*}
	\widetilde{\chi}_C(a + \sqrt{-1} b) = \widetilde{\chi_C(a)} + I\widetilde{\chi_C(b)}, \quad \text{for all } a, b \in \mathfrak{g}.
\end{equation*}
Also note that the $G$-action on $\Gamma(C, TC)$ descends to a $G$-action on $\Gamma(C, T_{\operatorname{quo}}C)$, and $\Gamma_I(C, TC)$ and $\Gamma_I(C, T_{\operatorname{quo}}C)$ are $G$-subrepresentations of $\Gamma(C, TC)$ and $\Gamma(C, T_{\operatorname{quo}}C)$ respectively.

\begin{proposition}
	\label{Proposition B.10}
	For all $a \in \mathfrak{g}$, $\chi_C(a)$ is a transversely holomorphic real vector field on $C$. Moreover, the map $\widetilde{\chi}_C: \mathfrak{g}_\mathbb{C} \to \Gamma_I(C, T_{\operatorname{quo}}C)$ is a $G$-equivariant complex Lie algebra homomorphism.
\end{proposition}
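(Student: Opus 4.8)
The plan is to prove the two assertions in turn, reducing each to the infinitesimal data of the $G$-action together with the structural results already established in this appendix. The first (that each $\chi_C(a)$ is transversely holomorphic) is the only step with genuine content; the second is a formal consequence of $\mathbb{C}$-linearity and the complex Lie algebra structure from Lemma \ref{Lemma B.7}.

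First I would prove that each $\chi_C(a)$ with $a \in \mathfrak{g}$ is transversely holomorphic by passing to its flow. The flow of $\chi_C(a)$ is the one-parameter subgroup action $\phi_t := \rho_C(\exp(ta))$, which is globally defined and, by hypothesis, consists of transverse biholomorphisms of $C$. Thus for each $t$, $d\phi_t$ preserves $\mathcal{E}$ by Proposition \ref{Proposition B.1}. Given $w \in \Gamma(C, \mathcal{E})$ and $x \in C$, the vector $(\phi_{-t})_* w(x) = d\phi_{-t}(w(\phi_t(x)))$ lies in $\mathcal{E}_x$ for every $t$, since $d\phi_{-t}$ carries $\mathcal{E}_{\phi_t(x)}$ into $\mathcal{E}_x$. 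Differentiating the curve $t \mapsto (\phi_{-t})_* w(x)$, which stays in the fixed subspace $\mathcal{E}_x$, at $t = 0$ yields $[\chi_C(a), w](x) \in \mathcal{E}_x$. Hence $[\chi_C(a), w] \in \Gamma(C, \mathcal{E})$, which is condition (\ref{Proposition B.4 (1)}) of Proposition \ref{Proposition B.4}, so $\chi_C(a) \in \Gamma_I(C, TC)$.

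With the first assertion in hand, well-definedness of $\widetilde{\chi}_C$ is immediate: $\widetilde{\chi_C(a)} \in \Gamma_I(C, T_{\operatorname{red}}C)$ for $a \in \mathfrak{g}$, and since $I$ preserves $\Gamma_I(C, T_{\operatorname{red}}C)$ (shown in the proof of Lemma \ref{Lemma B.7}), the combination $\widetilde{\chi_C(a)} + I\widetilde{\chi_C(b)}$ again lies in $\Gamma_I(C, T_{\operatorname{red}}C)$. Next I would verify $\mathbb{C}$-linearity directly: writing $\sqrt{-1}(a + \sqrt{-1} b) = -b + \sqrt{-1} a$, one checks that $\widetilde{\chi}_C(-b + \sqrt{-1} a) = -\widetilde{\chi_C(b)} + I \widetilde{\chi_C(a)}$ coincides with $I(\widetilde{\chi_C(a)} + I\widetilde{\chi_C(b)}) = I\,\widetilde{\chi}_C(a + \sqrt{-1}b)$ by $I^2 = -\operatorname{id}$, while $\mathbb{R}$-linearity is clear from the definition.

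For $G$-equivariance and the homomorphism property I would leverage the real case plus $\mathbb{C}$-bilinearity. The real map $\chi_C \colon \mathfrak{g} \to \Gamma(C, TC)$ satisfies $(\rho_C(g))_* \chi_C(a) = \chi_C(\operatorname{Ad}_g a)$, and each $d\rho_C(g)$ commutes with $I$ on $T_{\operatorname{red}}C$ since $\rho_C(g)$ is a transverse biholomorphism; combining these with the definition of $\widetilde{\chi}_C$ gives $g \cdot \widetilde{\chi}_C(Z) = \widetilde{\chi}_C(\operatorname{Ad}_g Z)$ for all $Z \in \mathfrak{g}_\mathbb{C}$. For the bracket, I first note that on real elements $[\widetilde{\chi_C(a)}, \widetilde{\chi_C(b)}] = \widetilde{[\chi_C(a), \chi_C(b)]} = \widetilde{\chi_C([a,b])}$, using that the bracket on $\Gamma_I(C, T_{\operatorname{red}}C) \subset H_\mathcal{F}^0(C, T_{\operatorname{red}}C)$ is $\widetilde{[u,v]}$ and that $\chi_C$ respects Lie brackets. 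Since both the bracket on $\mathfrak{g}_\mathbb{C}$ and the bracket on $\Gamma_I(C, T_{\operatorname{red}}C)$ are $\mathbb{C}$-bilinear (Lemma \ref{Lemma B.7}) and $\widetilde{\chi}_C$ is $\mathbb{C}$-linear, expanding $Z = a + \sqrt{-1}a'$ and $W = b + \sqrt{-1}b'$ and matching real and imaginary parts extends the homomorphism property from real generators to all of $\mathfrak{g}_\mathbb{C}$. The hard part is the first assertion, namely the passage from the \emph{global} hypothesis that every $\rho_C(g)$ is a transverse biholomorphism to the \emph{infinitesimal} bracket condition of Proposition \ref{Proposition B.4}; the remaining care is purely bookkeeping, keeping the complex structure $I$ and the scalar $\sqrt{-1}$ consistent throughout the bracket computation.
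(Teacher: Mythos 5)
Your proposal is correct and follows essentially the same route as the paper: establish that each $\chi_C(a)$ is transversely holomorphic, then extend $\mathbb{C}$-linearly to $\mathfrak{g}_\mathbb{C}$ using that $\Gamma_I(C, T_{\operatorname{red}}C)$ is a complex Lie algebra (Lemma \ref{Lemma B.7}) and deduce $G$-equivariance from the compatibility of the $G$-action with $I$. The only difference is that you supply a flow argument for the first assertion, which the paper dismisses with ``clearly''; that fills in a detail rather than changing the approach.
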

\begin{proof}
	Clearly, for all $a \in \mathfrak{g}$ and $v \in \Gamma(C, \mathcal{E})$, $[\chi_C(a), v] \in \Gamma(C, \mathcal{E})$. As the canonical projection $\Gamma_I(C, TC) \to \Gamma_I(C, T_{\operatorname{quo}}C)$ and $\chi_C: \mathfrak{g} \to \Gamma_I(C, TC)$ are $G$-equivariant Lie algebra homomorphisms, so is their composition $\mathfrak{g} \to \Gamma_I(C, T_{\operatorname{quo}}C)$. Thus, this composition extends to a complex Lie algebra homomorphism $\widetilde{\chi}_C: \mathfrak{g}_\mathbb{C} \to \Gamma_I(C, T_{\operatorname{quo}}C)$. The $G$-actions on $\mathfrak{g}_\mathbb{C}$ and on $\Gamma_I(C, T_{\operatorname{quo}}C)$ commute with multiplication by $\sqrt{-1}$ and $I$ respectively. Therefore, $\widetilde{\chi}_C$ is $G$-equivariant.
\end{proof}

\section{Equivariant Dolbeault Cohomology for free holomorphic actions}
\label{Appendix C}
Let $(M, J)$ be a complex manifold acted by a holomorphic $G$-action $\rho_M$, and $(E, \nabla^E)$ be a $G$-equivariant Hermitian holomorphic vector bundle with Chern connection over $M$. We adopt the Cartan model for equivariant Dolbeault cohomology (see \cite{Tel2000}). The Dolbeault operator $\overline{\partial}^E$ on $E$ commutes with the $G$-action on $\Omega^*(M, E)$, thus preserves the $G$-invariant part $\Omega^{0, *}(M, E)^G$. We call $(\Omega^{0, *}(M, E)^G, \overline{\partial}^E)$ the $G$-\emph{equivariant Dolbeault complex} of $E$. Its cohomology is equal to $H_{\overline{\partial}^E}^{0, *}(M, E)^G$, known as the $G$-\emph{equivariant Dolbeault cohomology} of $E$.\par
By Proposition \ref{Proposition B.10}, the induced infinitesimal action $\chi_M: \mathfrak{g} \to \Gamma(M, TM)$ extends to a Lie algebra homomorphism $\widetilde{\chi}_M: \mathfrak{g}_\mathbb{C} \to \Gamma(M, TM)$. Note that $(\mathfrak{g}_\mathbb{C}, G)$ is a Harish-Chandra pair. We call the pair $(\widetilde{\chi}_M, \rho_M)$ the $(\mathfrak{g}_\mathbb{C}, G)$ -\emph{action on} $M$ \emph{induced by} the holomorphic $G$-action $\rho_M$.

\begin{definition}
	\label{Definition C.1}
	We say that $(\mathfrak{g}_\mathbb{C}, G)$ \emph{acts on} $M$ \emph{freely} if $G$ acts on $M$ freely and for all $x \in M$, the map $\mathfrak{g}_\mathbb{C} \to \widetilde{T_xM}$ given by $a \mapsto \widetilde{\chi}_M(a)(x)$ is injective.
\end{definition}

From now on, suppose that $(\mathfrak{g}_\mathbb{C}, G)$ acts on $M$ freely. Obviously, the image $\mathfrak{g}_M$ of $M \times \mathfrak{g}$ (resp. $\widehat{\mathfrak{g}}_M$ of $M \times \mathfrak{g}_\mathbb{C}$) under the map $M \times \mathfrak{g}_\mathbb{C} \to TM, (x, a) \mapsto \widetilde{\chi}_M(a)(x)$ is a vector subbundle of $TM$, and $\widehat{\mathfrak{g}}_M = \mathfrak{g}_M \oplus J\mathfrak{g}_M$. Let $M_0 = M / G$ and $q: M \to M_0$ be the quotient map.

\begin{proposition}
	\label{Proposition C.2}
	There exists a unique transverse holomorphic structure $\mathcal{E}_0$ on $M_0$ such that $dq \vert_{T^{0, 1}M}: T^{0, 1}M \to q^*\mathcal{E}^0$ is a complex vector bundle isomorphism. Also, $dq \vert_{T^{0, 1}M}: T^{0, 1}M \to \mathcal{E}^0$ is a Lie algebroid morphism and
	\begin{center}
		\begin{tikzcd}
			\mathfrak{g}_M \ar[r] & \widehat{\mathfrak{g}}_M \ar[r, "dq \vert_{\widehat{\mathfrak{g}}_M}"] & q^*\mathcal{F}_0
		\end{tikzcd}
	\end{center}
	is a short exact sequence of vector bundles over $M$, where $\mathcal{F}_0 = \mathcal{E}_0 \cap \overline{\mathcal{E}_0} \cap TM_0$.
\end{proposition}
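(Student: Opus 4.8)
The plan is to construct $\mathcal{E}_0$ as the fibrewise pushforward of the antiholomorphic tangent bundle and then verify the three assertions in turn. First I would define, for $y = q(x) \in M_0$, the subspace $\mathcal{E}_{0, y} := dq_x(T_x^{0,1}M) \subset T_yM_0 \otimes \mathbb{C}$, with $dq_x$ extended $\mathbb{C}$-linearly. Independence from the choice of $x \in q^{-1}(y)$ follows from $q \circ \rho_M(g) = q$ together with the holomorphy of $\rho_M(g)$, which gives $d\rho_M(g)(T^{0,1}M) = T^{0,1}M$ and hence $dq_{x'} \circ d\rho_M(g)_x = dq_x$ for $x' = g \cdot x$; since $G$ is compact and acts freely, $q$ is a principal $G$-bundle and the resulting $\mathcal{E}_0$ is a smooth complex subbundle of $TM_0 \otimes \mathbb{C}$.

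The crux is to show that $dq \vert_{T^{0,1}M}: T^{0,1}M \to q^*\mathcal{E}_0$ is a fibrewise isomorphism. Surjectivity holds by construction, so I would compute the kernel $(\mathfrak{g}_{M,x} \otimes \mathbb{C}) \cap T_x^{0,1}M$, using that $\ker dq_x = \mathfrak{g}_{M,x} \otimes \mathbb{C}$ because $G$ acts freely. An element $\chi_M(a)(x) + \sqrt{-1}\chi_M(b)(x)$ of this intersection lies in $T_x^{0,1}M$ precisely when $J\chi_M(a)(x) = \chi_M(b)(x)$, i.e. $\widetilde{\chi}_M(b - \sqrt{-1}a)(x) = 0$; the freeness of the $(\mathfrak{g}_\mathbb{C}, G)$-action (Definition \ref{Definition C.1}) then forces $a = b = 0$. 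This is the step where the complexified freeness is indispensable, and I expect it to be the main obstacle, as everything else becomes formal once the kernel is known to vanish. In particular $\mathcal{E}_0$ has complex rank $\dim_\mathbb{C} M$.

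Next I would verify that $\mathcal{E}_0$ is a transverse holomorphic structure. Since $dq$ is real, $\overline{\mathcal{E}_0} = dq(T^{1,0}M)$, so $\mathcal{E}_0 + \overline{\mathcal{E}_0} = dq(TM \otimes \mathbb{C}) = TM_0 \otimes \mathbb{C}$ by surjectivity of $dq$. For involutivity I would note that every section of $\mathcal{E}_0$ lifts through the isomorphism $dq \vert_{T^{0,1}M}$ to a $G$-invariant section of $T^{0,1}M$ that is $q$-related to it; involutivity of $T^{0,1}M$ (integrability of $J$) together with the $q$-relatedness of Lie brackets shows both that $\mathcal{E}_0$ is involutive and that $dq \vert_{T^{0,1}M}$ is a Lie algebroid morphism, anchor compatibility being immediate since the anchors are inclusions. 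Uniqueness is then clear: any $\mathcal{E}_0'$ with the stated property satisfies $q^*\mathcal{E}_0' = dq(T^{0,1}M) = q^*\mathcal{E}_0$, and $q^*$ is injective on subbundles because $q$ is a surjective submersion.

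Finally, for the short exact sequence the first map is the inclusion $\mathfrak{g}_M \subset \widehat{\mathfrak{g}}_M = \mathfrak{g}_M \oplus J\mathfrak{g}_M$, and $\ker(dq \vert_{\widehat{\mathfrak{g}}_M}) = \widehat{\mathfrak{g}}_M \cap \ker dq = \mathfrak{g}_M$. To identify the image with $q^*\mathcal{F}_0$, the key computation is that for $a \in \mathfrak{g}$ one has $dq(J\chi_M(a)(x)) = -2\sqrt{-1}\, dq(\chi_M(a)^{0,1}(x)) = 2\sqrt{-1}\, dq(\chi_M(a)^{1,0}(x))$, which exhibits this real vector simultaneously in $\mathcal{E}_{0,y}$ and in $\overline{\mathcal{E}_{0,y}}$, hence in $\mathcal{F}_{0,y}$. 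A dimension count then closes the argument: from $\dim_\mathbb{C}\mathcal{E}_0 = \dim_\mathbb{C} M$ and $\mathcal{E}_0 + \overline{\mathcal{E}_0} = TM_0 \otimes \mathbb{C}$ one gets $\dim_\mathbb{C}(\mathcal{E}_0 \cap \overline{\mathcal{E}_0}) = \dim G = \dim_\mathbb{R}\mathcal{F}_0$, while exactness at $\widehat{\mathfrak{g}}_M$ gives $\dim_\mathbb{R} dq(\widehat{\mathfrak{g}}_M) = 2\dim G - \dim G = \dim G$; since $dq(\widehat{\mathfrak{g}}_M) \subset q^*\mathcal{F}_0$ with equal constant ranks, the inclusion is an equality and the sequence is short exact as vector bundles.
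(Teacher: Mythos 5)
Your proposal is correct and follows essentially the same route as the paper: define $\mathcal{E}_0$ as the image of $T^{0,1}M$ under $dq$, use the freeness of the $(\mathfrak{g}_\mathbb{C}, G)$-action to get $(\mathfrak{g}_M \otimes \mathbb{C}) \cap T^{0,1}M = 0$ and hence injectivity, deduce $\mathcal{E}_0 + \overline{\mathcal{E}_0} = TM_0 \otimes \mathbb{C}$ and involutivity from $q$-related lifts, and identify $\ker(dq\vert_{\widehat{\mathfrak{g}}_M}) = \mathfrak{g}_M$. The only minor deviation is at the very end, where you establish $dq(\widehat{\mathfrak{g}}_M) = q^*\mathcal{F}_0$ by a rank count, whereas the paper argues the reverse inclusion directly by writing $v \in \mathcal{F}_{0}$ as $dq(u + \sqrt{-1}Ju)$ and concluding $u \in J\mathfrak{g}_M$; both arguments are valid.
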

\begin{proof}
	As $\mathfrak{g}_M \cap (J\mathfrak{g}_M)$ is a zero vector bundle, so is $(\mathfrak{g}_M \otimes \mathbb{C}) \cap T^{0, 1}M$. Hence, the following $G$-equivariant complex vector bundle morphism is injective:
	\begin{equation}
		\label{Equation C.1}
		dq \vert_{T^{0, 1}M}: T^{0, 1}M \to q^*(TM_0 \otimes \mathbb{C}),
	\end{equation}
	and its image is of the form $q^*\mathcal{E}_0$ for a unique complex vector subbundle $\mathcal{E}_0$ of $TM_0 \otimes \mathbb{C}$.\par
	Since $dq: TM \to q^*TM_0$ is surjective and $TM \otimes \mathbb{C} = T^{1, 0}M \oplus T^{0, 1}M$, $\mathcal{E}_0 + \overline{\mathcal{E}_0} = TM_0 \otimes \mathbb{C}$. Now, fix $v_1, v_2 \in \Gamma(M_0, \mathcal{E}_0)$. As (\ref{Equation C.1}) is a vector bundle isomorphism, we can pick $u_1, u_2 \in \Gamma(M, T^{0, 1}M)$ such that $u_1, u_2$ are $q$-related to $v_1, v_2$ respectively. Then $[u_1, u_2] \in \Gamma(M, T^{0, 1}M)$. As $[u_1, u_2]$ is $q$-related to $[v_1, v_2]$, $[v_1, v_2] \in \Gamma(M_0, \mathcal{E}_0)$. Therefore, $\mathcal{E}_0$ is a transverse holomorphic structure on $M_0$. The above argument also shows that $dq \vert_{T^{0, 1}M}: T^{0, 1}M \to \mathcal{E}_0$ is a Lie algebroid morphism.\par
	Finally, it is clear that the kernel of $dq \vert_{\widehat{\mathfrak{g}}_M}$ is $\mathfrak{g}_M$. Now, we compute its image. For $u, u' \in \mathfrak{g}_M$,
	\begin{equation*}
		dq(u + Ju') = dq(-\sqrt{-1}u' + Ju') = dq(\sqrt{-1}u' + Ju') \in \mathcal{E}_0 \cap \overline{\mathcal{E}_0} \cap TM_0 = \mathcal{F}_0.
	\end{equation*}
	Conversely, suppose that $y \in M_0$ and $v \in \mathcal{F}_{0, y}$. Pick $u \in TM$ such that $v = dq(u + \sqrt{-1}Ju)$. Then $v = dq(u)$ and $dq(Ju) = 0$, implying that $u \in J\mathfrak{g}_M \subset \widehat{\mathfrak{g}}_M$. We are done.
\end{proof}

Recall from Proposition \ref{Proposition 4.8} that if $\nabla^E$ restricts to the infinitesimal $\mathfrak{g}$-action on $E$, then $(E, \nabla)$ descends to a Hermitian vector bundle $(E_0, \nabla^{E_0})$ over $M_0$.

\begin{proposition}
	\label{Proposition C.3}
	Suppose that $\nabla^E$ restricts to the infinitesimal $\mathfrak{g}$-action on $E$. Then $\nabla^{E_0}$ restricts to a flat $\mathcal{E}_0$-connection $d_{\mathcal{E}_0}^{E_0}$ on $E_0$ and the following is a cochain isomorphism:
	\begin{equation}
		\label{Equation C.2}
		q^*: \left( \Gamma(M_0, \textstyle\bigwedge \mathcal{E}_0^* \otimes E_0), d_{\mathcal{E}_0}^{E_0} \right) \to (\Omega^{0, *}(M, E)^G, \overline{\partial}^E).
	\end{equation}
	In particular, $H_{\mathcal{E}_0}^*(M_0, E_0) \cong H_{\overline{\partial}^E}^{0, *}(M, E)^G$.
\end{proposition}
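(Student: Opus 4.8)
The plan is to verify three things in turn: that $d_{\mathcal{E}_0}^{E_0}$ is a flat $\mathcal{E}_0$-connection, that $q^*$ is a linear isomorphism in each degree, and that $q^*$ intertwines the two differentials. The first two are essentially formal consequences of Proposition \ref{Proposition C.2} and the Chern connection hypothesis; the genuine content lies in the third.

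First I would define $d_{\mathcal{E}_0}^{E_0}$ by composing $\nabla^{E_0}$ with the restriction $T^*M_0 \otimes \mathbb{C} \to \mathcal{E}_0^*$, and compute its curvature, which is $F^{\nabla^{E_0}}$ restricted to $\bigwedge^2 \mathcal{E}_0$. Pulling back along $q$ and using $q^*\nabla^{E_0} = \nabla^E$ together with the bundle isomorphism $dq \vert_{T^{0,1}M}: T^{0,1}M \to q^*\mathcal{E}_0$ of Proposition \ref{Proposition C.2}, this matches the restriction of $F^{\nabla^E}$ to $\bigwedge^2 T^{0,1}M$. Since $\nabla^E$ is the Chern connection of the holomorphic bundle $E$, its curvature is of type $(1,1)$, so its $(0,2)$-part vanishes (equivalently $(\overline{\partial}^E)^2 = 0$); hence $d_{\mathcal{E}_0}^{E_0}$ is flat.

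Next, dualizing the $G$-equivariant isomorphism $dq \vert_{T^{0,1}M}: T^{0,1}M \to q^*\mathcal{E}_0$ and tensoring with the identification $E \cong q^*E_0$ from Proposition \ref{Proposition 4.8} yields a $G$-equivariant isomorphism $\bigwedge^k (T^{0,1}M)^* \otimes E \cong q^*\left( \bigwedge^k \mathcal{E}_0^* \otimes E_0 \right)$ of Hermitian bundles over $M$. Since $G$ acts freely on $M$, passing to $G$-invariant sections gives, in each degree $k$, a linear isomorphism $\Gamma\left( M_0, \bigwedge^k \mathcal{E}_0^* \otimes E_0 \right) \cong \Omega^{0,k}(M,E)^G$, realized precisely by $q^*$. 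No separate horizontality condition is needed here: it is already built into the identification $T^{0,1}M \cong q^*\mathcal{E}_0$, which rests on the fact that $(\mathfrak{g}_M \otimes \mathbb{C}) \cap T^{0,1}M = 0$ established in the proof of Proposition \ref{Proposition C.2}.

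The hard part will be the compatibility with the differentials. The key observation is that the Dolbeault complex $(\Omega^{0,*}(M,E), \overline{\partial}^E)$ is the Lie algebroid cohomology complex of the involutive bundle $T^{0,1}M$ with the flat partial connection $\overline{\partial}^E = \nabla^E \vert_{T^{0,1}M}$, while the Chevalley-Eilenberg complex on the left is the Lie algebroid cohomology of $\mathcal{E}_0$ with $d_{\mathcal{E}_0}^{E_0}$. By Proposition \ref{Proposition C.2}, $dq \vert_{T^{0,1}M}$ is a Lie algebroid morphism, so for $q$-related sections $u_i \in \Gamma(M, T^{0,1}M)$ and $v_i \in \Gamma(M_0, \mathcal{E}_0)$ the brackets $[u_i, u_j]$ and $[v_i, v_j]$ are again $q$-related; combined with $q^* \nabla^{E_0} = \nabla^E$, the Chevalley-Eilenberg formula for $d_{\mathcal{E}_0}^{E_0}$ pulls back term by term to the formula for $\overline{\partial}^E$, giving $\overline{\partial}^E \circ q^* = q^* \circ d_{\mathcal{E}_0}^{E_0}$. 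Thus $q^*$ is a cochain isomorphism, and taking cohomology yields $H_{\mathcal{E}_0}^*(M_0, E_0) \cong H_{\overline{\partial}^E}^{0,*}(M,E)^G$. The main obstacle is carrying out this term-by-term bookkeeping cleanly, since it requires tracking both the bracket and the connection through the isomorphism of Lie algebroids sitting over different base manifolds.
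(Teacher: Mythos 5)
Your proposal is correct and follows essentially the same route as the paper: flatness of $d_{\mathcal{E}_0}^{E_0}$ is checked by pulling the curvature back through $dq \vert_{T^{0,1}M}: T^{0,1}M \to q^*\mathcal{E}_0$ and using that the Chern connection has curvature of type $(1,1)$, and the cochain isomorphism is deduced from the bundle isomorphism and Lie algebroid morphism statements of Proposition \ref{Proposition C.2}. The paper compresses the second half into a single sentence, so your expansion of the degree-wise identification and the intertwining of differentials is simply a more detailed account of the same argument.
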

\begin{proof}
	Fix $y \in M_0$ and $v, v' \in \mathcal{E}_{0, y}$. Pick $x \in q^{-1}(y)$ and $u, u' \in T_x^{0, 1}M$ such that $v = dq(u)$ and $v' = dq(u')$. Then $F^{\nabla^{E_0}}(v, v') = F^{\nabla^E}(u, u') = 0$. It implies that $\nabla^{E_0}$ is flat along the $\mathcal{E}_0$-direction. Then, it follows from Proposition \ref{Proposition C.2} that (\ref{Equation C.2}) is a cochain isomorphism.
\end{proof}

\bibliographystyle{amsplain}
\bibliography{References}

\end{document}